\newtheorem{theorem}{Theorem}
\newtheorem{corollary}[theorem]{Corollary}
\newtheorem{definition}[theorem]{Definition}
\newtheorem{lemma}[theorem]{Lemma}
\newtheorem{proposition}[theorem]{Proposition}
\newtheorem{property}[theorem]{Property}
\newcommand{\sigmahat}{{\sigma}}
\title{self-induced Systems}
\date{\today}
\author{Fabien Durand$^1$, Nicholas Ormes$^2$, Samuel Petite$^1$}
\begin{document}
\bibliographystyle{plain}

\email{normes@du.edu}
\email{fabien.durand@u-picardie.fr}
\email{samuel.petite@u-picardie.fr}

\address{$^1$Laboratoire Ami\'enois
de Math\'ematiques Fondamentales et Appliqu\'ees, CNRS-UMR 7352, Universit\'{e} de Picardie Jules Verne, Amiens, France.} 

\address{$^2$ Department of Mathematics, University of Denver, Denver, United States}

\thanks{The first author would like to thank the ANR program Dyna3S. The second author thanks the Universit\'e de Picardie Jules Verne for the visiting professor position which at the origin of this work.}

\subjclass[2010]{Primary: 37B20; Secondary: 37B10} \keywords{Minimal Cantor systems, Bratteli-Vershik representations, substitutions}

\begin{abstract}
A minimal Cantor system is said to be self-induced whenever it is conjugate to one of its induced systems. 
Substitution subshifts and some odometers are classical examples, and we show that these are 
the only examples in the equicontinuous or expansive case.
Nevertheless, we exhibit a zero entropy self-induced system that is
neither equicontinuous nor expansive. We also provide non-uniquely
ergodic self-induced systems with infinite entropy.
Moreover, we give a characterization of self-induced minimal Cantor systems in terms of  substitutions on finite or infinite alphabets.
\end{abstract}

\maketitle

\section{Introduction}
From the Poincar\'e's recurrence theorem one can define the notion of a first return map on a set of positive measure and, as a consequence, what is now named an induced dynamical system.
In the 1940's S. Kakutani initiated, in the measurable framework, the study of such systems. 
Intensive studies of the induction structures for fifty years contributed to a better understanding of the orbital structures of dynamical systems, see for example \cite{Ornstein&Rudolph&Weiss:1982,Kosek&Ormes&Rudolph:2008,Roychowdhury&Rudolph:2009,delJunco&Rudolph&Weiss:2009,Dykstra&Rudolph:2010}.
However, in the topological context this notion has been less studied despite the powerful strategy it provides as shown in \cite{Giordano&Putnam&Skau:1995} (see also \cite{Herman&Putnam&Skau:1992}).

\medskip

Since, it has been observed in several well-known families of dynamical systems the phenomenon of self-induction, that is, systems being conjugate to one of its induced systems.

For interval exchange transformations (IETs), where induction plays a crucial role through the so-called Rauzy-Veech induction \cite{Veech:1978,Rauzy:1979}, a stationary behaviour my appear in the scheme of successive inductions.
The IETs whose interval lengths lie in a quadratic number field, as quadratic rotations, are conjugate to an induced system on a subinterval  \cite{Boshernitzan&Carroll:1997}.
Let us point out this is not particular to the quadratic fields since there are IETs defined on cubic fields that are self-induced on canonical intervals \cite{Arnoux&Rauzy:1991,Arnoux&Ito:2001, Ferenczi&Holton&Zamboni:2003,Bressaud&Jullian:2012,Jullian:2013} and even for some algebraic fields of arbitrary degree \cite{Arnoux:1988}.

Let us also mention that there exist self-induced exchanges of domains like for the Rauzy fractal \cite{Rauzy:1982} where the conjugacy is given by a similarity. 
A last example is given by the Pascal adic transformation \cite{Mela&Petersen:2005} that can be proven to be self-induced. 
In most of the former papers the self-induction property comes from a
measure theoretical conjugacy to a substitution subshift.
This observation was enlightened in \cite{Arnoux&Ito:2001} where it is shown that any system with a $\sigma$-structure, that is a sequence of finite partitions having specific properties, is conjugate to a substitution subshift. 

\medskip

The self-induction property is sensitive to the class of sets on which the induction is made.
For instance, inducing on measurable sets instead of intervals, each irrational rotation on the torus is an induced system of any other irrational rotation  \cite{Ornstein&Rudolph&Weiss:1982}, thus all rotations are self-induced (in the measure theoretical framework).
A similar phenomena arises in the topological context.
Indeed, we show in Section \ref{section:poincare} that given two minimal systems $(X,T)$ and $(Y,S)$, on Cantor sets, there always exists a closed set $C\subset X$ such that the induced map $T_C$ is well-defined  and $(C, T_C)$ is topologically conjugate to $(Y,S)$ (Theorem \ref{theo:weakkakutanieq}).
This incidentally proves that all such systems are self-induced on some proper closed subsets.

\medskip

In this paper we focus on  minimal topological dynamical systems $(X,T)$ with a self-induced behaviour: that is topological dynamical systems that are conjugate to 
one of their induced systems on proper open  (and necessarily closed) sets. The minimality of the system ensures the induced systems are well defined. 
Since a self-induced system has  closed and open sets, called clopen sets, it is natural to consider dynamics on Cantor sets, called Cantor systems. 
From the Abramov formula for the entropy of induced  systems, the  self-induced property  implies that the entropy is 0 or $+\infty$ (see Proposition \ref{prop:entropy}).
Among zero entropy systems, it is a folklore result that $p$-odometers ({\em i.e.}, addition of $1$ in the set $\mathbb{Z}_p$ of $p$-adic integers) 
or minimal substitution subshifts  \cite{Queffelec:1987} are self-induced (see propositions \ref{prop:chara_selfinduced_odo} and \ref{prop:SubSelf}). 
 
 The goal of this paper is to characterize self-induced minimal Cantor systems. 
In Section \ref{sec:preliminaries}, we give preliminary results on such systems. Among them, we show that the  clopen set on which the system is self-induced can be taken inside any open set.

The next section is devoted to the characterization of self-induced systems in the  equicontinuous  and expansive cases. 
 For the equicontinuous  case  ({\em i.e.}, odometers) we prove: an odometer is self-induced if and only if  it factorizes onto a $p$-odometer  (Proposition \ref{prop:chara_selfinduced_odo}). Hence,  the odometer built as the inverse limit of $(\mathbb{Z}/p_1\cdots p_n \mathbb{Z} )$, where $(p_i)_i$ is the sequence of primes, is not self-induced whereas the 2-odometer (thus, on $\mathbb{Z}_2$) is.
For  expansive minimal Cantor system ({\em i.e.}, minimal subshift) it is equivalent to be conjugate to a minimal substitution subshift  (Proposition \ref{prop:SubSelf} and Theorem \ref{submain}).
 
 We provide in Section \ref{sec:nonEquicnonExp} two examples of self-induced minimal Cantor systems that are neither equicontinuous nor expansive. 
 One example has zero entropy and the other $+\infty$. Moreover this last example is not uniquely ergodic. 

In Section \ref{sec:charSelfInduced}, we characterize self-induced minimal Cantor systems by means of 
  \emph{generalized substitution subshifts} {\em i.e.},   systems 
generated by  substitution maps on a compact zero-dimensional (but not
necessarily finite) alphabet (theorems \ref{gen1} and \ref{gen2}).

In  the last section, we define the notion of Poincar\'e sections and we prove Theorem \ref{theo:weakkakutanieq}.


\section{Preliminaries}\label{sec:preliminaries}


\subsection{Dynamical systems}


For us a \emph{topological dynamical system}, or just a dynamical system, 
is a couple $(X,T)$ where $X$ is a compact metric space $X$ and $T$ is a homeomorphism $ T:X\rightarrow X$. A dynamical system is \emph{minimal} if every orbit is dense in $X$, or equivalently if the only non empty closed invariant set is \( X \). 
We call $(X,T)$ a {\em Cantor system} if $X$ is a Cantor set, 
that is, if \( X \) is a compact metric space with a countable basis of 
closed and open sets and  no isolated points.
We recall that any such space is homeomorphic to the standard Cantor ternary set.
When $X$ is a subset of $A^\mathbb{Z}$ where $A$ is a finite set
and $S$ is the shift map $S(x)_n = x_{n+1}$, the Cantor 
system $(X,S)$ is called a {\em subshift}.
The system  $(X,T)$ is said to be {\em aperiodic} if $T$ has no periodic points. Observe that minimal
Cantor systems are always aperiodic. 

Let $(X_1,T_1)$ and $(X_2,T_2)$ be two dynamical systems.
We say $(X_2,T_2)$ is a {\em factor}\index{factor} of $(X_1,T_1)$ if there is a continuous and onto map $\varphi : X_1 \rightarrow X_2$ such that $\varphi \circ T_1 = T_2\circ \varphi$.
Then, $\varphi $ is called {\em factor map}.
If $\varphi$ is one-to-one we say it is a {\em conjugacy}, and $(X_1,T_1)$ and $(X_2,T_2)$ are {\em conjugate}.

For a minimal system $(X,T)$ and for any open set $U \subset X$, we can define the {\em return time function} $r_U: X \to \mathbb{N}$ by 
$$ r_{U} \colon x  \mapsto \inf\{n > 0  : T^n(x) \in U  \}.$$ 

It is well known that for a minimal Cantor system and $A \subset X$ a clopen set, the 
function $r_A$ is locally constant hence continuous. 
The {\em induced map} $T_A:A \to A$ is then defined by 
$$ T_{A} \colon x \mapsto T^{r_{A}(x)}(x).$$    
Such a map $T_A:A \to A$ is a homeomorphism and the Cantor system $(A, T_A)$ 
is called the {\em induced system} on $A$. 
If the system $(X,T)$ is minimal 
and $A \subset X$ is clopen 
then, $(A,T_A)$ is also minimal. 

Conversely, let $(X,T)$ be a Cantor system  and $r \colon X \to {\mathbb Z}_{+}$ a continuous map, the {\em exduced system} $(\tilde{X}_{r}, \tilde{T})$ 
is the dynamical system defined by the set 
$\tilde{X}_{r} = \{(x, i) : x \in X,  0 \le i < r(x) \}$ and the map 
$\tilde{T} :\tilde{X}_{r} \to \tilde{X}_{r}$ defined by  
$$(x,i)  \mapsto  \begin{cases} 
(x, i+1) & \textrm{ if } 0 \leq  i < r(x)-1 \\ 
(Tx, 0) & \textrm{ if } i = r(x).
\end{cases}
$$
It is plain to check that the exduced system of $(X,T)$ is a minimal Cantor system
if $(X,T)$ is. 
Moreover, the induced system of $(\tilde{X}_{r}, \tilde{T})$ on the clopen set 
$X \times \{0\}$ is conjugate to the system $(X, T)$. 
More precisely, if $(X,T)$ is a minimal Cantor system, $U$ a clopen set of $X$ and 
$r_{U}$ the associated return function then, $(x,i)  \mapsto T^{i}x$
is a conjugacy between the systems $(\tilde{U}_{r_{U}}, \tilde{T}) $ and $(X,T)$. 

\begin{definition}\label{def:SI}
A minimal Cantor system $(X,T)$ is  \emph{self-induced} 
if there exists a non-empty clopen proper subset $U \subset X$ such that 
the induced system $(U, T_U)$ is conjugate to the system $(X,T)$.
\end{definition}

While we work within the topological category in our study of self-induced systems, 
it will be relevant to consider invariant Borel probability measures. By 
the Bogoliouboff-Krylov theorem, associated
to any topological dynamical system $(X,T)$ there is a nonempty compact, convex space $M(X,T)$ of 
$T$-invariant Borel probability measures \cite{KryBog:1937}, see also \cite{Petersen:1983}. 
When $M(X,T)$ is a singleton set, as is the case for minimal substitution subshifts and odometer systems, 
we say that $(X,T)$ is \emph{uniquely ergodic}. The proposition below is well-known and describes the relation
between $M(X,T)$ and the space of measures for an induced system of $(X,T)$. 

\begin{proposition}
\label{prop:inducedmeasure}
Let $(X,T)$ be a minimal Cantor system and suppose $U \subset X$ is clopen. 
Let $F:M(X,T) \to M(U,T_U)$ be the function where 
$F(\mu)$ is the measure $\nu$ on 
$U$ defined by $\nu(B) = \frac{\mu(B)}{\mu(U)}$ where $B \subset U$ is a Borel set. 
Then, $F$ is a bijection. 
\end{proposition}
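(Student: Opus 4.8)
The plan is to identify both measure spaces through the Kakutani--Rokhlin tower built over $U$. Since $r_U$ is locally constant and $U$ is compact, $r_U$ is bounded and the level sets $U_n = \{x \in U : r_U(x) = n\}$ are clopen, finitely many of them nonempty, and they partition $U$; by minimality $X = \bigsqcup_{n \ge 1}\bigsqcup_{i=0}^{n-1} T^i(U_n)$ is a clopen partition of $X$, so every point is uniquely $T^i(x)$ with $x \in U_n$ and $0 \le i < n$. The single identity that drives the whole argument concerns the passage through the top of each column: for any Borel $B \subseteq U$,
$$ T^{-1}(B) = \bigsqcup_{m \ge 1} T^{m-1}\bigl(T_U^{-1}(B) \cap U_m\bigr), $$
because $y$ satisfies $Ty \in B \subseteq U$ exactly when $y$ lies on a top floor $T^{m-1}(U_m)$ and $Ty = T^m x = T_U(x) \in B$. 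I would first isolate this wrap-around identity, since it is the only place where the ambient map $T$ and the induced map $T_U$ are linked.

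For well-definedness of $F$, I note that minimality forces every nonempty open set to have positive measure for every invariant measure, so $\mu(U) > 0$ and $\nu = F(\mu)$ is a genuine probability measure on $U$. To see that $\nu$ is $T_U$-invariant, I apply $T$-invariance of $\mu$ and the wrap-around identity: for Borel $B \subseteq U$,
$$ \mu(B) = \mu\bigl(T^{-1}B\bigr) = \sum_{m\ge 1}\mu\bigl(T^{m-1}(T_U^{-1}(B)\cap U_m)\bigr) = \sum_{m\ge 1}\mu\bigl(T_U^{-1}(B)\cap U_m\bigr) = \mu\bigl(T_U^{-1}B\bigr), $$
where stripping $T^{m-1}$ uses $T$-invariance and the last step uses that the $U_m$ partition $U$. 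Dividing by $\mu(U)$ gives $\nu(B) = \nu(T_U^{-1}B)$, so $F(\mu) \in M(U,T_U)$.

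To invert $F$, given $\nu \in M(U,T_U)$ I would build the lifted measure on $X$ floor by floor: for Borel $A \subseteq X$ set
$$ \widetilde\mu(A) = \sum_{n \ge 1}\sum_{i=0}^{n-1} \nu\bigl(T^{-i}(A) \cap U_n\bigr), $$
a finite Borel measure of total mass $\widetilde\mu(X) = \sum_n n\,\nu(U_n) = \int_U r_U \, d\nu$ (Kac's formula), which is finite since $r_U$ is bounded and positive since it is at least $\nu(U)=1$; thus $\mu := \widetilde\mu/\widetilde\mu(X)$ is a probability measure. Because $T^{-i}(B) \cap U_n = \emptyset$ for $0 < i < n$ by definition of the return time, one checks $\widetilde\mu(U) = \nu(U) = 1$ and $\widetilde\mu(B) = \nu(B)$ for $B \subseteq U$, whence $F(\mu) = \nu$, giving surjectivity. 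The remaining point is that $\widetilde\mu$ is $T$-invariant: splitting $A$ into its floor pieces $A_{n,i} = A \cap T^i(U_n)$, the preimage of an interior floor ($1 \le i \le n-1$) drops one floor in the same column and carries the same mass, while the preimage of a base piece $A_{n,0} \subseteq U$ is handled by the wrap-around identity together with the $T_U$-invariance of $\nu$. This last case is the main obstacle: it is precisely the wrap-around at the tops of the columns, and it is the only place where $T_U$-invariance of $\nu$ is consumed.

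Finally, injectivity follows because a $T$-invariant $\mu$ is completely determined by $\nu = F(\mu)$: for any Borel $A$, $T$-invariance gives $\mu\bigl(A \cap T^i U_n\bigr) = \mu\bigl(T^{-i}(A)\cap U_n\bigr) = \mu(U)\,\nu\bigl(T^{-i}(A)\cap U_n\bigr)$, and summing over all floors recovers $\mu(A)$ from $\nu$ and the constant $\mu(U)$, which is itself pinned down by $\nu$ through $\mu(U) = 1/\int_U r_U\,d\nu$. Hence $F$ is a bijection. The only verifications I would treat as routine are $\sigma$-additivity and measurability of the lift; the substantive content is the single wrap-around identity and its two symmetric uses, one for each direction of invariance.
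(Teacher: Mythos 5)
Your proof is correct and follows essentially the same route as the paper's: both decompose $X$ into the Kakutani--Rokhlin tower $\{T^i U_k\}$ over $U$, use the Kac normalization $\sum_k k\,\nu(U_k)$, and lift $\nu$ to $X$ floor by floor via the same formula (your $\widetilde\mu$ is the paper's $\rho\mu$). The only differences are expository: you explicitly verify the $T_U$-invariance of $F(\mu)$ and the $T$-invariance of the lift via the wrap-around identity (facts the paper asserts without proof), and you obtain injectivity by directly reconstructing $\mu$ from $\nu$ rather than by the paper's normalization-and-contradiction argument.
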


\begin{proof}
First note that any such measure $\nu = \frac{1}{\mu(U)}\mu$ defines a Borel probability measure on $U$. 
Also, for any Borel set $B \subset U$,  
$$\nu(T_U^{-1}B) = \frac{\mu(T_U^{-1}B)}{\mu(U)} = \frac{\mu(B)}{\mu(U)} = \nu(B).$$
Thus, the map $F$ is well-defined. 

We will prove that $F$ is bijective. To this end, let 
$U_k = \{ x \in U : r_U(x)=k \}$. 
Then, $U_k$ is clopen for all $k$, and for some $K$, $U$ is a disjoint union 
of $U_1, U_2, \ldots , U_K$. 
It follows that the collection $\{ T^i U_k : 0 \leq i < k , 1 \leq k \leq K \}$ forms a clopen partition of $X$. 

Suppose there exist two measures $\mu, \mu '$ in $M (X,T)$ such that $F(\mu)(B) = F(\mu ')(B)$ for every measurable subset $B$ of $U$.
Set $r = \mu(U)/\mu'(U)$. 
Then, for every subset $B$ of $U$, $\mu(B) = r \mu' (B)$ and
$$
1 = \mu(X) = \sum_k  k \mu(U_k) = \sum_k k r \mu'(U_k) = r \sum_k k \mu'(U_k) = r \mu'(X) = r .
$$
Then, there is a clopen set $B$ such that $\mu(B) \neq \mu'(B)$. 
But $$\mu(B) = \sum_{k=1}^K \sum_{j=0}^{k-1}  \mu ( B \cap T^j U_k) =
\sum_{k=1}^K \sum_{j=0}^{k-1}  \mu ( T^{-j} B \cap U_k). $$
Therefore, 
$\mu ( T^{-j} B \cap U_k) \neq 
\mu' ( T^{-j} B \cap U_k)$ for some $j,k$. 
So we get $F (\mu )(T^{-j} B \cap U_k) \neq F (\mu' ) (T^{-j} B \cap U_k)$, a contradiction.
 Hence $F$ is injective. 

Let $\nu$ be a $T_U$-invariant Borel 
probability measure on $U$. Let $\rho = \sum_{k=1}^K k \nu (U_k)$. 
For a Borel set $B \subset X$, define the $T$-invariant measure 
$$\mu(B) = \frac{1}{\rho} \sum_{k=1}^K \sum_{j=0}^{k-1}  \nu ( T^{-j}B \cap U_k).$$

We wish to see that $F (\mu)=\nu$. 
Notice that for $B \subset U$ a Borel set
$$\mu(B) = \frac{1}{\rho} \sum_{k=1}^K \nu ( B \cap U_k) = \frac{\nu(B)}{\rho} .$$

So, for $B = U$, we get $\mu (U) = 1/\rho $ and $\nu(B) = \frac{\mu(B)}{\mu(U)} = F (\mu )(B)$. Therefore, $F$ is onto. 
\end{proof}


\subsection{Observations about self-induced systems}

We will make a series of observations about self-induced systems. 
We see that 
Abramov's Formula and the Variational Principle imply that the topological entropy, denoted $h_{\rm top} (T)$, of any 
self-induced minimal Cantor system $(X,T)$ is either 0 or $\infty$.
See \cite{Petersen:1983} for an introduction to entropy and these results. 

Suppose $(X,T)$ is a self-induced minimal Cantor system. We note that 
the conjugacy to an induced system may be iterated. 
{\begin{proposition}\label{prop:recuSelfInduce}
Suppose $(X,T)$ is a minimal Cantor system  conjugate via $\varphi$ to the induced system $(U,T_U)$ where 
$U \subsetneq X$ is clopen. Then, $\varphi_{|\varphi^k(X)}$ is a conjugacy 
from  the induced system $(\varphi^k(X),T_{\varphi^k(X)})$ to   $(\varphi^{k+1}(X), T_{\varphi^{k+1}(X)})$ for any $k \geq 0$. \\ In particular $(X, T)$ and  $(\varphi^k(X),T_{\varphi^k(X)})$ are conjugate by $\varphi^k$.
\end{proposition}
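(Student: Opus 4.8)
The plan is to argue by induction on $k$, relying on two standard facts about inducing. Write $X_k = \varphi^k(X)$, so that $X_0 = X$, $X_1 = \varphi(X) = U$, and in general $X_{k+1} = \varphi(X_k)$. Since $\varphi$ is a homeomorphism of $X$ onto the clopen set $U$, a routine induction shows that each $X_k$ is clopen in $X$ and that $\varphi$ restricts to a homeomorphism $X_k \to X_{k+1}$; in particular the $X_k$ form a nested decreasing sequence of clopen sets with $X_{k+1}$ clopen in $X_k$. The two facts I will use are: (a) \emph{conjugacies restrict to conjugacies of induced systems} --- if $\psi \colon (Y,S) \to (Y',S')$ is a conjugacy of minimal Cantor systems and $V \subseteq Y$ is clopen, then $\psi$ carries the $V$-return times of $S$ to the $\psi(V)$-return times of $S'$, so that $\psi|_V$ is a conjugacy from $(V, S_V)$ to $(\psi(V), S'_{\psi(V)})$; and (b) \emph{transitivity of inducing} --- if $B \subseteq A \subseteq X$ are clopen, then inducing $T$ first on $A$ and then on $B$ recovers the induced map on $B$ directly, i.e. $(T_A)_B = T_B$.

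For the base case $k=0$, the return time to the whole space is identically $1$, so $T_{X_0} = T$, and $(X_1, T_{X_1}) = (U, T_U)$; thus the statement for $k=0$ is exactly the hypothesis that $\varphi$ conjugates $(X,T)$ to $(U,T_U)$. For the inductive step, I would assume that $\varphi|_{X_k}$ is a conjugacy from $(X_k, T_{X_k})$ to $(X_{k+1}, T_{X_{k+1}})$ and apply fact (a) to this conjugacy and the clopen subset $V = X_{k+1} \subseteq X_k$. Its image is $\varphi(X_{k+1}) = X_{k+2}$, and (a) yields that $\varphi|_{X_{k+1}}$ is a conjugacy from $(X_{k+1}, (T_{X_k})_{X_{k+1}})$ to $(X_{k+2}, (T_{X_{k+1}})_{X_{k+2}})$. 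By fact (b), $(T_{X_k})_{X_{k+1}} = T_{X_{k+1}}$ and $(T_{X_{k+1}})_{X_{k+2}} = T_{X_{k+2}}$, which is precisely the statement for $k+1$. The final ``in particular'' clause then follows by composition: $\varphi^k$ is the composite of the conjugacies $\varphi|_{X_i} \colon (X_i, T_{X_i}) \to (X_{i+1}, T_{X_{i+1}})$ for $0 \le i < k$, hence a conjugacy from $(X,T)$ to $(X_k, T_{X_k})$.

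I expect the only genuine content to lie in verifying fact (b): one must check that, because $X_{k+1} \subseteq X_k$, the successive visits of a $T$-orbit to $X_{k+1}$ are exactly those visits of the $T_{X_k}$-orbit that land back in $X_{k+1}$, so the first return of $T_{X_k}$ to $X_{k+1}$ coincides with the first return of $T$ to $X_{k+1}$. Fact (a) is a direct consequence of the intertwining relation $\psi \circ S = S' \circ \psi$, which preserves the condition ``$S^n(y) \in V$'' and hence the return-time function. Both are standard, so the heart of the argument is simply setting up the induction with the correct nesting of the clopen sets $X_k$ and being careful that the dynamics appearing at each level is genuinely the induced map $T_{X_k}$ of the original $T$, which is where (b) does its work.
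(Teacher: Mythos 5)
Your proof is correct and rests on exactly the same two facts as the paper's own argument: that a conjugacy carries induced systems to induced systems (your fact (a)), and that inducing is transitive, $(T_A)_B = T_B$ for nested clopen sets (your fact (b)). The paper simply applies these directly to the original conjugacy $\varphi$ and the clopen set $\varphi^k(X)$ for each $k$, rather than running an induction, which is an inessential difference in packaging.
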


\begin{proof}
The induced system of $(X,T)$ on a  clopen set $\varphi^k(X)$, is conjugate by $\varphi$ to the induced system of $(U,T_{U})$ on $\varphi^{k+1}(X)$. We have just to notice that this induced system is the induced system of $(X,T)$ on $\varphi^{k+1}(X)$.
\end{proof}
}

Using the above, we can show that 
$(X,T)$ is conjugate to an induced map $T_U$ on a clopen set $U$ where $U \subset X$ 
is an arbitrarily small set, where the smallness is expressed in terms of the invariant measures. 

\begin{proposition}
\label{prop:uniftozero}
Let $(X,T)$ be a minimal Cantor system conjugate via $\varphi$ to the induced system $(U,T_U)$ where 
$U \subsetneq X$ is clopen, 
then, $\lim_{n} \sup_{\mu \in M(X,T)} \mu \left( \varphi^n(X)\right) = 0$.
In particular, $\bigcap_{n \in \mathbb{N}} \varphi^n(X)$ has measure 0 with respect to any 
$T$-invariant Borel measure $\mu$. 
\end{proposition}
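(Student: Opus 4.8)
The plan is to set up a multiplicative recursion for $\mu(\varphi^n(X))$ that produces uniform geometric decay. Write $X_n = \varphi^n(X)$, so that $X_0 = X$, $X_1 = U$, and $X_{n+1} = \varphi(X_n)$; since $\varphi(X) = U \subseteq X$, applying $\varphi^n$ gives a decreasing chain of clopen sets $X \supseteq X_1 \supseteq X_2 \supseteq \cdots$. By Proposition \ref{prop:recuSelfInduce}, the map $\varphi^n$ is a conjugacy from $(X,T)$ onto the induced system $(X_n, T_{X_n})$, and this is the structural input I would exploit.

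First I would record that $c := \sup_{\mu \in M(X,T)} \mu(U)$ satisfies $c < 1$. Indeed $\mathbf{1}_U$ is continuous because $U$ is clopen, so $\mu \mapsto \mu(U)$ is weak-$*$ continuous on the compact set $M(X,T)$ and the supremum is attained; and for any invariant $\mu$, minimality forces $\mu(X \setminus U) > 0$ (a nonempty open set carries positive mass in a minimal system), whence $\mu(U) < 1$. Thus $c$ is a maximum of values all strictly below $1$.

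The heart of the argument is the inequality $\mu(X_{n+1}) \le c\,\mu(X_n)$ for every $\mu \in M(X,T)$ and every $n$. Fixing $\mu$ and $n$, I would apply Proposition \ref{prop:inducedmeasure} to the clopen set $X_n$: the normalized restriction $\nu_n$, defined by $\nu_n(B) = \mu(B)/\mu(X_n)$ for Borel $B \subseteq X_n$, is a $T_{X_n}$-invariant probability measure on $X_n$. Pulling $\nu_n$ back through the conjugacy $\varphi^n$ yields a measure $\lambda \in M(X,T)$ with $\lambda(A) = \nu_n(\varphi^n(A))$. Now $X_{n+1} = \varphi^{n+1}(X) = \varphi^n(U)$, so $\nu_n(X_{n+1}) = \nu_n(\varphi^n(U)) = \lambda(U) \le c$. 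Since $\nu_n(X_{n+1}) = \mu(X_{n+1})/\mu(X_n)$, this is exactly the claimed inequality. Iterating gives $\mu(X_n) \le c^n$ uniformly in $\mu$, so $\sup_{\mu} \mu(X_n) \le c^n \to 0$; and $\bigcap_n X_n \subseteq X_n$ forces $\mu\left(\bigcap_n X_n\right) = 0$ for every invariant $\mu$.

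The step I expect to require the most care is the measure bookkeeping in the recursion: one must verify that the normalization of Proposition \ref{prop:inducedmeasure} and the pullback along $\varphi^n$ compose so that the relative mass $\nu_n(X_{n+1})$ is literally the value $\lambda(U)$ of an honest $T$-invariant measure. In particular one checks that $\lambda$ is $T$-invariant, which uses that $\varphi^n$ intertwines $T$ with $T_{X_n}$ (so $\varphi^n T^{-1} = T_{X_n}^{-1}\varphi^n$) together with the $T_{X_n}$-invariance of $\nu_n$. Once this identity $\mu(X_{n+1})/\mu(X_n) = \lambda(U)$ is secured, everything else is a routine induction and a passage to the limit.
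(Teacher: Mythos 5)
Your proof is correct and takes essentially the same approach as the paper: both establish $r := \sup_{\mu \in M(X,T)}\mu(U) < 1$ by compactness of $M(X,T)$, and then derive the geometric decay $\mu\left(\varphi^{n+1}(X)\right) \le r\,\mu\left(\varphi^{n}(X)\right)$ by combining the bijection of Proposition \ref{prop:inducedmeasure} with the conjugacy structure. The only cosmetic difference is that you invoke Proposition \ref{prop:recuSelfInduce} to work with the iterated conjugacy $\varphi^n$ at each stage, whereas the paper establishes the single-level inequality $\mu(\varphi(U)) \le r\,\mu(U)$ via the relation $\frac{1}{\mu(U)}\mu = \nu\circ\varphi^{-1}$ and then iterates it by induction.
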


\begin{proof}
Since $U^c$ is clopen and nonempty, $\mu(U) < 1$ for all 
$\mu \in M(X,T)$, and by compactness of $M(X,T)$, $\sup_{\mu \in M(X,T)} \mu(U) < 1$.
Let $r=\sup_{\mu \in M(X,T)} \mu(U)$. 

Since $\varphi:X \to U$ is a conjugacy, every measure in $M(U,T_U)$ is of the form  
$\nu \circ \varphi^{-1}$ for some $\nu \in M(X,T)$. Therefore, by Proposition \ref{prop:inducedmeasure}, 
$\frac{1}{\mu(U)}\mu = \nu \circ \varphi^{-1}$ for some 
$\nu \in M(X,T)$. 
In particular, 
$$
r \geq \nu(U) = \nu \circ \varphi^{-1}(\varphi(U)) = \frac{\mu(\varphi(U))}{\mu(U)} .
$$
So $\mu(\varphi(U)) \leq r \mu(U) \leq r^2$.
It follows by induction that $\mu(\varphi^k(U)) \leq r^{k+1}$ for all $k \geq 1$. 
This ends the proof.
\end{proof}

\begin{proposition}
The topological entropy of any self-induced minimal Cantor system is $0$ or $\infty$. 
\label{prop:entropy}
\end{proposition}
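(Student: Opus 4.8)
The plan is to use the Abramov formula, which relates the measure-theoretic entropy of an induced system to that of the original system, together with the Variational Principle. Recall that Abramov's formula states that for a minimal Cantor system $(X,T)$, an ergodic $T$-invariant measure $\mu$, and a clopen set $U \subset X$, the induced system $(U, T_U)$ with the normalized measure $\frac{1}{\mu(U)}\mu$ satisfies
\begin{equation*}
h_{\frac{1}{\mu(U)}\mu}(T_U) = \frac{1}{\mu(U)} h_\mu(T).
\end{equation*}
First I would fix a $T$-invariant measure $\mu$ and apply Abramov's formula with the clopen set $U$ on which the system is self-induced.

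Next, since $(X,T)$ is conjugate via $\varphi$ to $(U, T_U)$, the conjugacy preserves measure-theoretic entropy: if $\nu \in M(X,T)$ corresponds under $F$ (from Proposition \ref{prop:inducedmeasure}) and the conjugacy $\varphi$ to $\mu$, then $h_\nu(T) = h_{\frac{1}{\mu(U)}\mu}(T_U)$. Combining this with Abramov's formula yields a relation of the form $h_\nu(T) = \frac{1}{\mu(U)} h_\mu(T)$, where $\nu$ is the measure on $X$ that pushes forward appropriately. The key point is that $\mu(U) < 1$ strictly, since $U$ is a proper clopen subset and $U^c$ is a nonempty clopen set carrying positive measure (as established in the proof of Proposition \ref{prop:uniftozero}).

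The main step is to leverage this strict inequality $\mu(U) < 1$ to force a dichotomy. The plan is to take $\mu$ to be a measure of maximal measure-theoretic entropy on $(X,T)$; by the Variational Principle, $\sup_\mu h_\mu(T) = h_{\mathrm{top}}(T)$, and in a Cantor system (being expansive when a subshift, but in general one invokes compactness and upper semicontinuity, or simply works with the supremum directly). Taking the supremum over all invariant measures on both sides of the Abramov relation, and using that the correspondence $F$ is a bijection on measure spaces, I would obtain $h_{\mathrm{top}}(T) = \frac{1}{\mu(U)} h_{\mathrm{top}}(T)$ in the limiting sense, or more carefully a relation showing that if $h_{\mathrm{top}}(T)$ is finite and positive, then $\frac{1}{\mu(U)} > 1$ forces a strict increase, contradicting that both sides equal the same topological entropy.

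The hard part will be making the supremum argument rigorous, since the maximal measure need not exist without upper semicontinuity of the entropy map, and the factor $\frac{1}{\mu(U)}$ depends on which measure $\mu$ one chooses. To handle this cleanly, I would argue as follows: for any $\nu \in M(X,T)$ with corresponding $\mu$, we have $h_\nu(T) = \frac{1}{\mu(U)}h_\mu(T) \geq \frac{1}{r}h_\mu(T)$ where $r = \sup_{\mu} \mu(U) < 1$. If the topological entropy $h := h_{\mathrm{top}}(T)$ were finite and strictly positive, then choosing measures $\mu_k$ with $h_{\mu_k}(T) \to h$, the corresponding measures would have entropy approaching $\frac{1}{r} h > h$, contradicting the Variational Principle bound $h_\nu(T) \leq h$. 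Thus finiteness of $h$ forces $h = 0$, and otherwise $h = \infty$, establishing the dichotomy. The only subtlety to verify is that iterating (via Proposition \ref{prop:recuSelfInduce}) or taking limits does not disturb the measure-correspondence, which follows from the bijectivity in Proposition \ref{prop:inducedmeasure}.
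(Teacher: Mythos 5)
Your proof is correct and follows essentially the same route as the paper: Abramov's formula combined with the variational principle, with the decisive input that $\sup_{\mu \in M(X,T)} \mu(U) < 1$ (compactness of $M(X,T)$ plus full support of invariant measures on a minimal system). The only minor difference is that the paper first shrinks $U$ via Propositions \ref{prop:recuSelfInduce} and \ref{prop:uniftozero} so that $\mu(U)\leq\epsilon$ for every invariant measure and then compares topological entropies of $(X,T)$ and $(U,T_U)$ directly, whereas you transport each induced measure back through the conjugacy and reach a contradiction along a sequence of measures approaching the supremum --- a variant that renders the shrinking step unnecessary.
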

\begin{proof}
Suppose $(X,T)$ is a minimal Cantor system and $U \subsetneq X$ is a clopen 
set such that $(U,T_U)$ is conjugate to $(X,T)$. 
Let $1> \epsilon >0$.
From propositions \ref{prop:recuSelfInduce} and \ref{prop:uniftozero}, one can take $U$ such that $\mu (U)\leq \epsilon $ for all $\mu \in M (X,T)$.

For $\mu \in M (X,T)$ we set $\mu_U = \frac{1}{\mu (U)}\mu$. 
It is a probability measure defined on the Borel sets of $U$. 
Abramov's formula relates the measure-theoretic entropy of the systems $(X,T,\mu)$ and $(U,T_U,\mu_U)$ as follows:
$h_{\mu}(T) = \mu(U) h_{{\mu}_{U}}(T_U)$. 
From the variational principle one gets 
\begin{align*}
h_{\rm top} (T) & = \sup_{\mu \in M (X,T)} h_\mu (T) = \sup_{\mu \in M (X,T)} \mu (U) h_{\mu_{U} } (T_U) \\
&  \leq  \epsilon \sup_{\mu \in M (X,T)} h_{\mu_{U} } (T_U) = \epsilon h_{\rm top} (T_U) ,
\end{align*}
where the last equality comes from Proposition \ref{prop:inducedmeasure}. Since $(U,T_U)$ is conjugate to $(X,T)$, we have $ h_{\rm top}(T_U)= h_{\rm top} (T)$,  
which ends the proof.
\end{proof}

\subsection{Bratteli-Vershik representations}
\label{subsec:bvrep}

In this section, we discuss the Bratteli-Vershik representation for a 
minimal Cantor system; every minimal Cantor system admits such a representation. 
We give a brief outline of such constructions emphasizing the notations used in this paper.
The reader can find for more details on this theory in \cite{Herman&Putnam&Skau:1992} (see also \cite{Giordano&Putnam&Skau:1995} and \cite{Durand:2010}). 
Such representations will be very helpful in many proofs or remarks below thanks to Proposition \ref{th:BVinduction}.

\subsubsection{Bratteli diagrams}

A Bratteli diagram is given by \( \left( (V_{k})_{k\ge 0}, (E_{k})_{k\ge 1}\right) \) where for each $k\ge 1$  $V_{k-1}$ is a finite set of vertices and   $ E_{k}$ is a finite set of oriented edges from $V_{k-1}$ to $V_{k}$.  The set \( V_{0} \) is a singleton \( \{v_{0}\} \), and for \( k\geq 1 \), 
$V_k=\{1,\ldots,C(k)\}$. 
For the purposes of non-degeneracy, we require 
that every vertex in \( V_{k} \) is the ``end-point'' 
of some edge in \( E_{k} \) for \( k\geq 1 \)
and an ``initial-point'' of some edge in \( E_{k+1} \) for \( k\geq 0 \). 

The \emph{level} \( k \) of the diagram is the
subgraph consisting of the vertices in \( V_{k}\cup V_{k+1} \) and the edges \( E_{k+1} \) between these vertices. 
We describe the edge set \( E_{k} \) using a \( V_{k}\times V_{k-1} \) matrix $M(k)$ over 
$\mathbb{Z}_+$ which is the adjacency matrix of the level $k$ subgraph. 
The \( (i,j) \)--entry of $M(k)$ is the number of distinct 
edges in \( E_{k} \) joining vertex \( j\in V_{k-1} \) with vertex \( i\in V_{k} \). 
For $1\leq k \leq l$ one defines 
$P(l,k)=M(l) \cdots M(k+1)$ with $P(k,k)=I$; where $I$ is the identity map. 

If for every $k$, $P(k+1,k)$ is positive, we say that the diagram $(V,E)$ is \emph{simple}. 

\subsubsection{Ordered Bratteli diagrams}

We call a {\em range located order} (RL-order for short) on a graph $(V,E)$ a partial  order on its set of edges $E$ such that two edges $e, e'$ are comparable if and only if they have the same end-point.     
An \emph{ordered} Bratteli diagram is a triple \( B=\left( (V_{k}), (E_{k}),(\preceq_{k}) \right) \) where \( \left( (V_{k}),(E_{k})\right)\) is a Bratteli
diagram and  each \( \preceq_{k}  \) is an RL-order on  the subgraph  of level $k$.


Let \( 1 \leq k\leq l \) and let \( E_{k,l} \) be the set of all paths in the graph joining vertices of \(
V_{k-1} \) with vertices of \( V_{l} \). The orders $\preceq_{k}, \ldots, \preceq_{l}$   induce an order on \( E_{k,l} \) given by 
\( \left( e_{k},\ldots,e_{l}\right) \preceq_{k,l} \left( f_{k},\ldots ,f_{l}\right)  \) if and only if there is \( k\leq i\leq l \) such that \( e_{j}=f_{j} \) for \(i<j\leq l \) and \( e_{i}\preceq_{i} f_{i} \).  Notice it is an RL-order on the graph $(V_{k}\cup V_{l}, E_{k,l})$. 

Given a strictly increasing sequence of integers \( \left( m_{k}\right) _{k\geq 0} \) with \( m_{0}=0 \) one defines the \emph{contraction} of
\( B=\left( (V_{k}),(E_{k}),(\preceq_{k}) \right)  \) (with respect to \( \left( m_{k} \right) _{k\geq 0} \)) as 
$$ \left( \left( V_{m_{k}}\right) _{k\geq 0},\left( E_{m_{k}+1,m_{k+1}}\right) _{k\geq 0},(\preceq_{m_{k} +1, m_{k+1}  })_{k\ge0} \right).$$ 
 The inverse operation is called {\it miscroscoping} (see \cite{Herman&Putnam&Skau:1992} for more details).

Given an ordered Bratteli diagram \( B=\left( (V_{k}),(E_{k}),(\preceq_k)\right) \) one defines \( X_{B} \) as the set of infinite paths \( \left( x_{1},x_{2},\ldots \right)  \) starting in \( v_{0} \) such that for all \( k\geq 1 \) the end-point of \( x_{k}\in E_{k} \) 
is the
initial-point of \( x_{k+1}\in E_{k+1} \). We topologize \( X_{B} \) by postulating a basis of open sets, namely the family of \emph{cylinder
sets}
$$
\left[ e_{1},e_{2},\ldots ,e_{l}\right]_B 
=\left\{ \left( x_{1},x_{2},\ldots \right) \in X_{B} \textrm{ } : \textrm{ }
x_{i}=e_{i},\textrm{ for }1\leq i\leq l\textrm{ }
\right\}.$$
Each \( \left[ e_{1},e_{2},\ldots ,e_{l}\right]_B  \) is also closed, as is
easily seen, and so we observe that \( X_{B} \) is a compact, totally disconnected metrizable space.

When there is a unique \( x=\left( x_{1},x_{2},\ldots \right) \in X_{B} \) such that \( x_{i} \) is maximal for any \( i\geq 1 \) and a unique
\( y=\left( y_{1},y_{2},\ldots \right) \in X_{B} \) such that \( y_{i} \) is minimal for all \( i\geq 1 \), one says that 
\( B  \) is a \emph{properly ordered} Bratteli diagram. Call these particular points \( x_{\mathrm{max}} \) and \(
x_{\mathrm{min}} \) respectively. In this case one defines a homeomorphism 
\( V_{B} : X_{B} \to X_B \) called the \emph{Vershik map} as follows. Let \( x=\left( x_{1},x_{2},\ldots \right) \in X_{B}\setminus \left\{ x_{\mathrm{max}}\right\}  \) and let \(
k\geq 1 \) be the smallest integer so that \( x_{k} \) is not a maximal edge. Let \( y_{k} \) be the successor of \( x_{k} \) and \( \left( y_{1},\ldots ,y_{k-1}\right)  \) be the unique minimal path in \( E_{1,k-1} \) connecting \( v_{0} \) with the initial vertex of \( y_{k} \). One
sets \( V_{B}\left( x\right) =\left( y_{1},\ldots ,y_{k-1},y_{k},x_{k+1},\ldots \right)  \) and \( V_{B}\left( x_{\mathrm{max}}\right)=x_{\mathrm{min}} \).

The dynamical system \( \left( X_{B},V_{B}\right)  \) is called the \emph{Bratteli-Vershik system} generated by \( B\). 
In \cite{Herman&Putnam&Skau:1992} it
is proved that any minimal Cantor system \( \left( X,T\right)  \) is conjugate to a Bratteli-Vershik system \( \left(
X_{B},V_{B}\right)  \) on a simple, properly ordered Bratteli diagram. 
One says that \( \left( X_{B},V_{B}\right) \) is a \emph{Bratteli-Vershik representation} of \( \left( X,T\right)  \).
In what follows we identify a minimal Cantor system $(X,T)$ with any of its Bratteli-Vershik representations.

A minimal Cantor system is of (topological) {\em finite rank} if it admits a  Bratteli-Vershik representation such that the number of vertices per level $\# V_n$ 
is uniformly bounded by some integer $d$. 
The minimal such $d$ is called the \emph{topological rank} of the system. We observe that topological and measure theoretical finite rank are completely different notions. For instance, systems of topological rank one correspond only to odometers whereas the Chacon subshift has  measure theoretical rank one (see \cite{Ferenczi:1997}).

\subsubsection{Induction and Bratteli-Vershik representations}

Let us show that the induction process can be easily 
``observed'' through Bratteli diagrams, as enlightened in  \cite{Giordano&Putnam&Skau:1995}.

Let $U$ be a clopen set of $(X_B , V_B)$. 
It is a finite union of cylinder sets. 
We can suppose they all have the same length {\em i.e.}, for some $n$, $U=\cup_{p\in {\texttt P}} [p]_B$ where ${\texttt P}$ is a set of paths from  $V_0$ to $V_n$.
Since the diagram is simple, we may assume that every vertex in $V_n$ is the end-point 
for a path in ${\texttt P}$. Thus, we may define a contraction $\hat{B}$ of $B$ where levels $1$ through 
$n$ are contracted to level $1$.
To obtain a Bratteli-Vershik representation of the induced system on $U$ it suffices to take the properly ordered Bratteli diagram $B'$ which on level $1$ consists of all 
edges in $\hat{B}$ corresponding to a path in ${\texttt P}$, and is otherwise the same 
as the diagram $\hat{B}$, where the new order $\preceq'_{1}$ is induced by the one of $\hat{B}$. 
It is not too much work to prove that the 
induced system on $U$ is conjugate to $(X_{B'} , V_{B'})$. 
To summarize we get the following, where the proof can be found in that of Theorem  3.8 in  \cite{Giordano&Putnam&Skau:1995}.

\begin{proposition}
\label{th:BVinduction}
Let $(X_B,V_B)$ be the Bratteli-Vershik dynamical
system associated with a simple properly ordered Bratteli diagram $B=((V_{n}),(E_{n}),(\preceq_{n}))$.
Let ${\texttt P}$ be a set of paths from $V_0$ to $V_{n_0}$.
Then, a Bratteli-Vershik representation of the induced system of $(X_B,V_B)$ on the clopen set $U=\cup_{p\in {\texttt P}} [p]_B$ is defined by the following Bratteli diagram $((V_{n}') , (E'_{n}) ,(\preceq_{n}'))$, where:
\begin{itemize}
\item The set of end-points of paths in $\texttt P$  is $V'_{1}$; 
\item
The paths ${\texttt P}$ are the edges from $V'_0 = V_{0}$ to $V'_{1}$;
\item  
The set of edges in $E_{n_{0}+1}$ with initial-points in $V'_{1}$ is $E'_{2}$; 
\item 
for $n\geq 2$, $V'_n = V_{n-1+n_0}$ and $E'_{n+1} = E_{n+n_0}$ with the induced order.
\end{itemize}
\end{proposition}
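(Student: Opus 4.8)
The plan is to reduce the statement to a clean ``thinning of the first level'' assertion and then verify a conjugacy by hand. First I would pass to the contraction $\hat B$ of $B$ that collapses levels $1$ through $n_0$ to a single first level (the contraction associated with $m_0=0$, $m_1=n_0$, $m_k=n_0+k-1$ for $k\geq1$), so that the first vertices become $V_{n_0}$, the first-level edges of $\hat B$ are exactly the paths in $E_{1,n_0}$ carrying the order $\preceq_{1,n_0}$, and $\hat B$ agrees with $B$ from level $1$ onward. Since contraction does not change the Bratteli--Vershik system up to conjugacy (a standard fact, see \cite{Herman&Putnam&Skau:1992}), we have $(X_{\hat B},V_{\hat B})\cong(X_B,V_B)$, and under the natural identification of paths the clopen set $U$ becomes $\hat U=\bigcup_{p\in \texttt P}[p]_{\hat B}$, i.e.\ the paths whose first edge lies in $\texttt P\subseteq E_{1,n_0}$. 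Because the diagram is simple we have already arranged that every vertex of $V_{n_0}$ is the endpoint of some path in $\texttt P$; I would record this, since it is precisely what makes the construction work. I would then observe that the diagram $B'$ of the statement is nothing but $\hat B$ with its first-level edge set $E_{1,n_0}$ replaced by the subset $\texttt P$ (with the restricted order) and everything from level $1$ on unchanged — here one uses $V_1'=V_{n_0}$ and $E_2'=E_{n_0+1}$. It therefore suffices to prove that inducing $V_{\hat B}$ on $\hat U$ yields the Vershik system of the diagram obtained from $\hat B$ by deleting all first-level edges outside $\texttt P$.

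For this reduced statement the candidate conjugacy $\psi\colon X_{B'}\to\hat U$ is the identity on paths: a point of $X_{B'}$ is a path $(e_1,x_2,x_3,\dots)$ with $e_1\in \texttt P$ and $(x_2,x_3,\dots)$ legal in $\hat B$, which is exactly a point of $\hat U$, and since the cylinder bases coincide $\psi$ is a homeomorphism. The content is to check $\psi\circ V_{B'}=(V_{\hat B})_{\hat U}\circ\psi$, which I would do by comparing the two maps on $x=(e_1,x_2,\dots)$ according to whether $e_1$ is maximal among the $\texttt P$-edges ending at its endpoint. If $e_1$ is not $\texttt P$-maximal, then $V_{B'}$ replaces it by the next $\texttt P$-edge, while iterating $V_{\hat B}$ advances $e_1$ through the intermediate edges of $E_{1,n_0}$, each lying outside $\texttt P$ and hence outside $\hat U$, until the first return, which occurs exactly at that next $\texttt P$-edge; the outputs agree. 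If $e_1$ is $\texttt P$-maximal, iterating $V_{\hat B}$ first pushes $e_1$ up to the $\hat B$-maximal edge of $E_{1,n_0}$ at the same vertex (again all intermediate edges lying outside $\texttt P$), after which the carry happens at the first non-maximal level $k\geq2$; because $B'$ and $\hat B$ coincide from level $2$ on, this carry is identical to the one performed by $V_{B'}$, and the first-level reset of $V_{\hat B}$ lands on the $\hat B$-minimal edge at the relevant vertex $w$, from which further applications of $V_{\hat B}$ climb to the minimal $\texttt P$-edge at $w$. Since the order on $\texttt P$ is the restriction of the $\hat B$-order, ``the first $\texttt P$-edge encountered'' is the $\texttt P$-minimal (resp.\ successor) edge, so the reset matches $V_{B'}$ exactly; the existence of a $\texttt P$-edge ending at $w$ — needed to guarantee a return to $\hat U$ — is where the reduction ``every vertex of $V_{n_0}$ is an endpoint of a path in $\texttt P$'' is used.

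Finally I would verify the standing hypotheses: that $B'$ is simple (its matrices from level $1$ on are those of $\hat B$, and $M'(1)$ has every row nonzero by the reduction) and properly ordered (the unique $\hat B$-maximal and $\hat B$-minimal tails from level $2$ on, together with the $\texttt P$-maximal, resp.\ $\texttt P$-minimal, edge at the pinned first-level vertex, furnish a unique maximal and minimal path), so that $V_{B'}$ is a well-defined homeomorphism. The extreme points are then handled by the $\texttt P$-maximal analysis read in its limiting form: $V_{B'}$ sends $x_{\max}^{B'}$ to $x_{\min}^{B'}$, and $(V_{\hat B})_{\hat U}$ sends $\psi(x_{\max}^{B'})$ to $\psi(x_{\min}^{B'})$, so $\psi$ extends across the cut point.

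The one genuinely delicate point is the bookkeeping in the $\texttt P$-maximal case: I must confirm that every Vershik iterate strictly between two consecutive visits to $\hat U$ really avoids $\hat U$, so that the computed return time is correct, and that the compound ``carry at level $k$, then climb at level $1$'' of $V_{\hat B}$ reproduces the single $B'$-carry. This is purely combinatorial and follows from the compatibility of the induced orders with restriction to $\texttt P$, but it is the step I would write out most carefully; everything else is either the standard contraction fact or a direct comparison of cylinder sets.
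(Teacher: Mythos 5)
Your overall route---contract levels $1$ through $n_0$ into a single first level, observe that the diagram $B'$ of the statement is the contracted diagram $\hat B$ with its first-level edges thinned to ${\texttt P}$, and check that the identity map on path spaces intertwines $V_{B'}$ with the first-return map---is exactly the route the paper sketches before the proposition (the paper itself defers the actual verification to Theorem 3.8 of \cite{Giordano&Putnam&Skau:1995}), and your carry-by-carry analysis is correct \emph{under the standing hypothesis you impose}, namely that every vertex of $V_{n_0}$ is the end-point of some path in ${\texttt P}$.

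The gap is that hypothesis. You assert that ``because the diagram is simple we have already arranged'' it, but nothing is arranged: the proposition fixes an \emph{arbitrary} set ${\texttt P}$ of paths, and its conclusion, the diagram $B'$, is built from that ${\texttt P}$. Simplicity does let one re-present the clopen set $U$ by a different path set at a deeper level, but that changes ${\texttt P}$ and hence changes the diagram $B'$ appearing in the statement, so it is not a ``without loss of generality'' for the statement as written; this is precisely why the statement defines $V'_1$ as the set of end-points of ${\texttt P}$ rather than as $V_{n_0}$, and defines $E'_2$ as only those edges of $E_{n_0+1}$ with initial point in $V'_1$. When $V'_1 \subsetneq V_{n_0}$, your two central claims fail: $E'_2$ is a proper subset of $E_{n_0+1}$, so ``$B'$ and $\hat B$ coincide from level $2$ on'' is false, and an edge can be maximal for $\preceq'_2$ without being maximal in $\hat B$, so the level at which $V_{B'}$ performs its carry need not be the level at which $V_{\hat B}$ does. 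Concretely, take $x=(e_1,x_2,x_3,\dots)$ with $e_1$ ${\texttt P}$-maximal and $x_2$ maximal among $E'_2$-edges but not among $E_{n_0+1}$-edges: then $V_{B'}$ carries at some level $\geq 3$, whereas the $V_{\hat B}$-orbit of $x$ first carries at level $2$, wanders through towers over vertices of $V_{n_0}\setminus V'_1$ performing possibly many further level-$2$ carries (none re-entering $\hat U$), and only afterwards returns; your bookkeeping, which assumes each excursion is a single level-$1$ climb followed by one carry common to both diagrams, does not cover this. The conclusion is still true in this generality---the first return does land on $V_{B'}(x)$, because the $\hat B$-successor sweep at level $2$ reaches the $E'_2$-successor (or, after a higher carry, the $E'_2$-minimal edge) as the first level-$2$ edge whose initial vertex lies in $V'_1$---but this needs to be proved, either by extending your case analysis to these level-$2$ excursions, or by a genuine reduction: replace ${\texttt P}$ by the set ${\texttt P}'$ of its one-edge extensions to $V_{n_0+1}$ (simplicity \emph{does} then give that every vertex of $V_{n_0+1}$ is an end-point of a ${\texttt P}'$-path), apply your argument to ${\texttt P}'$, and check that the diagram produced for ${\texttt P}'$ is a contraction of $B'$, hence yields a conjugate Bratteli--Vershik system. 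Either patch is routine, but one of them must be supplied.
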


We say a Bratteli diagram $((V_{n}), (E_{n}), (\preceq))$  is {\em stationary}  whenever the sequences $(V_{n})$, $(E_{n})$ and $(\preceq_{n})$ are constant. 
Minimal Cantor systems having such a representation are exactly substitution subshifts or odometers with constant base (see Section \ref{sec:EquicontExpansif} or \cite{Durand&Host&Skau:1999}). 
A straightforward consequence of Proposition \ref{th:BVinduction} shows these systems are self-induced.

Below we make use of the Bratteli-Vershik representation to prove a technical lemma which will be useful later
in the paper. 

\begin{proposition} \label{wlog}
Let $(X,T)$ be a self-induced minimal Cantor system and let $W \subset X$ be clopen, nonempty. 
Then, $(X,T)$ is conjugate to an induced system $(U,T_U)$ where $U \subset W$. 
\end{proposition}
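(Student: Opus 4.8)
The plan is to work entirely with a Bratteli--Vershik representation and to use Proposition~\ref{th:BVinduction}, exploiting the fact (from propositions~\ref{prop:recuSelfInduce} and~\ref{prop:uniftozero}) that the self-inducing clopen set may be taken of arbitrarily small measure, and hence represented at an arbitrarily deep level. First I would reduce to the case where $W$ is a single cylinder. Since we only need to exhibit \emph{some} clopen $U\subset W$, we may freely replace $W$ by a nonempty clopen subset, so I fix a cylinder $[p]_B\subset W$ whose defining path $p$ runs from $V_0$ to a vertex $v\in V_n$. By transitivity of the first-return map (for clopen $U\subset [p]_B$ one has $T_U=(T_{[p]_B})_U$), the existence of $U\subset[p]_B$ with $(U,T_U)\cong(X,T)$ is equivalent to the assertion that $(X,T)$ is conjugate to an induced system of $([p]_B,T_{[p]_B})$. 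So it suffices to produce such an induced system inside the cylinder.

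Next I would describe $([p]_B,T_{[p]_B})$ through Proposition~\ref{th:BVinduction}: it is the Bratteli--Vershik system of a diagram $B_v$ rooted at the single vertex $v$, agreeing with the tail of $B$ beyond level $n$. On the other side, self-inducedness gives, at a level $m$ chosen much larger than $n$, a set of paths $\mathcal P$ from $V_0$ to $V_m$ whose induced diagram---its first level carrying $\mathcal P$ on top of the tail of $B$ from level $m$---is conjugate to $(X,T)$ via an iterate of $\varphi$ (Proposition~\ref{prop:recuSelfInduce}). The construction would then transplant this pattern below $v$: using that $B$ is simple, so that $v$ is joined to every vertex of $V_m$ with multiplicities given by the entries of $P(m,n)$, which grow without bound as $m\to\infty$, I would select inside $B_v$ a family of paths from $v$ to $V_m$ realising the same combinatorial and order data as $\mathcal P$. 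Inducing $([p]_B,T_{[p]_B})$ on the clopen set cut out by these paths should then yield a diagram of the form (finite top below $v$) followed by (tail of $B$ from level $m$), which I would identify, up to the Vershik order, with a system conjugate to $(X,T)$, giving the desired $U\subset[p]_B\subset W$. A clean auxiliary fact supports the repositioning throughout: for any clopen $A$ and any $j$, the map $x\mapsto T^{j}x$ is a conjugacy from $(A,T_A)$ onto $(T^{j}A,T_{T^{j}A})$, so a copy built in one floor of a tower can be slid into the prescribed cylinder.

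The step I expect to be the main obstacle is precisely this transplantation. The diagram $B_v$ contains only the suffixes of those paths of $B$ that pass through $v$, whereas $\mathcal P$ runs through all vertices of $V_n$; hence one cannot literally copy the induced diagram of the self-induction below $v$, and matching endpoint multiplicities at level $m$ is not enough, since a Bratteli--Vershik system depends on its whole top and not only on its tail. The crux is therefore to show that the conjugacy \emph{class} of $(X,T)$, and not merely the coarse incidence data, can be regrown from within $B_v$. I would attack this by analysing the conjugacy $\varphi$ itself: a conjugacy onto an induced system is realised by a finite recoding near the root together with the identity on deep enough tails, and it is this local-recoding-plus-tail structure, combined with the positivity furnished by simplicity and with the induced order on the path sets $E_{n+1,m}$, that should let the top of $B_v$ be pruned and ordered so as to reproduce $(X,T)$ exactly.
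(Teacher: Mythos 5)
Your overall strategy (pass to a Bratteli--Vershik representation, iterate $\varphi$ via propositions \ref{prop:recuSelfInduce} and \ref{prop:uniftozero} to make the self-inducing set small, then use Proposition \ref{th:BVinduction} to compare induced diagrams at a deep level) is exactly the paper's strategy, and your reduction to a single cylinder $[p]_B\subset W$ together with transitivity of induction is legitimate. But the proof stalls at the step you yourself flag as the main obstacle, and your diagnosis of that obstacle is backwards: matching endpoint multiplicities at level $m$ \emph{is} enough. By Proposition \ref{th:BVinduction}, the induced system on a union of cylinders determined by a path set $\mathcal{P}$ ending at level $m$ has an ordered diagram whose entire top consists of the paths of $\mathcal{P}$ as first-level edges, carrying an RL-order, followed by the unchanged tail of $B$ from level $m$ on; there is no other data. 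An RL-order restricted to the first-level edges into a fixed vertex is a finite linear order, hence determined up to order isomorphism by its cardinality alone. Therefore any endpoint-preserving bijection between two path sets with equal multiplicities into each vertex of $V_m$, extended by the identity on the tail, is an isomorphism of ordered Bratteli diagrams and yields a conjugacy of the induced systems. This single observation is what the paper's proof rests on: it takes $U$ with $\mu(U)<\mu(W)$ for all $\mu\in M(X,T)$, represents $U$ and $W$ by path sets $\texttt{P}_U$, $\texttt{P}_W$ ending at a deep level $n$, obtains $u(v)<w(v)$ for every $v\in V_n$, forms $W'\subset W$ from the first $u(v)$ of the $\texttt{P}_W$-paths into each $v$, and concludes $(U,T_U)\cong(W',T_{W'})$ by precisely the diagram isomorphism above.

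There is a second gap: even granting the transplantation principle, you must show the multiplicities can be realized inside $B_v$, i.e.\ that the number $u(v')$ of $\mathcal{P}$-paths into each $v'\in V_m$ is at most the number $P(m,n)_{v',v}$ of paths from $v$ to $v'$. That ``the entries of $P(m,n)$ grow without bound'' does not give this, because $u(v')$ also grows with $m$ (the self-inducing path set at level $m$ comes from iterating $\varphi$ and is not of bounded size). The paper obtains the needed domination from the measure inequality $\sup_\mu \mu(U)<\inf_\mu\mu(W)$ combined with simplicity and the Ergodic Theorem, explicitly flagging it as nontrivial and citing the proof of Lemma 2.5 of Glasner--Weiss. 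Finally, your fallback plan --- that a conjugacy onto an induced system ``is realised by a finite recoding near the root together with the identity on deep enough tails'' --- is an unsupported assumption, not a fact you may invoke: conjugacies between Bratteli--Vershik systems need not have this form, and nothing in your argument produces it. So as written the proposal does not close; the two missing ingredients are exactly the order-isomorphism observation and the measure-theoretic path-count comparison that the paper supplies.
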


\begin{proof}
Let $(X,T)$ be a self-induced minimal Cantor system and let $W \subset X$ be 
a nonempty clopen set. By iterating the self-induction conjugacy $\varphi$ sufficiently 
many times, Proposition \ref{prop:uniftozero} implies that $(X,T)$ is conjugate to an induced map $(U,T_U)$ where
$\mu(U) < \mu(W)$ for all $\mu$ in $M(X,T)$. 

For large enough $n$, there exist sets, ${\texttt P}_U$ and ${\texttt P}_W$,  of paths ending at level $n$ satisfying $U=\cup_{p\in {\texttt P}_U} [p]_B$ and $W=\cup_{p\in {\texttt P}_W} [p]_B$. 
Since $\mu(U) < \mu(W)$ for all $\mu$ in $M(X,T)$, 
taking $n$ sufficiently large, we may assume that for each vertex $v \in V_n$, 
the number of paths in $\texttt P_U$ which end at $v$ is less than the number of paths in 
$\texttt P_W$ which end at $v$ (This is not a trivial fact, but is rather standard by now. 
It follows from the 
simplicity of the diagram and the Ergodic Theorem. For example see proof of 
Lemma 2.5 in \cite{Glasner&Weiss:1995}).

For every vertex $v$ in $V_n$, let
$u(v)$ equal the number of $\texttt P_U$-paths ending at $v$ and 
$w(v)$ the number of $\texttt P_W$-paths ending at $v$. By assumption $u(v)<w(v)$ for each $v$. 

Define a set $W' \subset W$ which is the union over $v \in V_n$ of the 
union of cylinder sets of the first $u(v)$ $\texttt P_W$-paths ending at $v$.
Then, we have Bratteli-Vershik representations of the induced map on $U$ and the induced map on 
$W'$ on diagrams which both have exactly $u(v)$ edges from $v_0$ to $v \in V_1$ 
and are identical below level $n$. 
Thus, $(U,T_U)$ is conjugate to $(W',T_{W'})$ where $W' \subset W$. 
\end{proof}

\section{Equicontinuous and expansive cases}\label{sec:EquicontExpansif}

We recall that, from the previous section, to be self-induced it is 
necessary to have zero entropy or infinite entropy. 
Among zero entropy dynamical systems is the family of those having finite topological rank. 
A result of Downarowicz and Maass \cite{Downarowicz&Maass:2008} asserts that in this family the Cantor systems are either equicontinuous, that is odometers, or expansive, that is subshifts. 
Here we treat these both cases characterizing the self-induced Cantor systems of finite topological rank.


\subsection{Equicontinuous case: the odometers}


Let $(q_n)_{n\ge 1}$ be a integer sequence such that $q_n \geq 2$.
We set  $p_{n} = q_{n}\cdots q_2 q_1$.
The set of $(q_n)$-{\em adic integers} is the inverse limit
$$
\mathbb{Z}_{(q_n)} = \left\{ (x_n) \in \prod_{n=1}^{\infty} \mathbb{Z} / p_n \mathbb{Z} : x_{n} \equiv x_{n+1} \mod p_n \right\}.
$$
We endow $\prod_{n=1}^{\infty} \mathbb{Z} / p_n \mathbb{Z}$ with the product topology of the discrete topologies. The set of $(q_{n})$-adic integers $\mathbb{Z}_{(q_n)}$ endowed with the induced topology forms  a Cantor set. It is a  topological group (Exercise). The dual group of $\mathbb{Z}_{(q_n)}$ ({\em i.e.}, the group of continuous characters) is conjugate to the group of $(q_{n})_{n}$-adic rationals

$$
{\mathbb Q}_{(q_{n})} = \left\{ \frac j{p_{n}} :  j\in \mathbb{Z} , n\geq 1 \right\}.
$$ 

A base of the topology  of $\mathbb{Z}_{(q_n)}$  is given by the sets 
$$
[a_1 , a_2 , \dots , a_n] = \{ (x_n) \in \mathbb{Z}_{(q_n)} : x_i = a_i , 1\leq i\leq n  \},
$$
with $ a_{i} \in  \mathbb{Z} / p_i \mathbb{Z}$, $i=1 , \ldots, n$.

When $p_n = p^n$ for all $n$, it defines the classical ring of $p$-adic integers $\mathbb{Z}_p$.
Let $R : \mathbb{Z}_{(q_n)} \to \mathbb{Z}_{(q_n)}$ be the map $x\mapsto x+1$.
As it will always be clear from the context these maps will be denoted by $R$ for any sequence $(q_n)$.
The Cantor system  $(\mathbb{Z}_{(q_n)} , R)$ is called {\em odometer in base} $(p_n)$ or {\em with characteristic sequence} $(q_n)$. 
This system is minimal and aperiodic.

Following the notation of Section \ref{subsec:bvrep}, a Bratteli-Vershik representation of the odometer $(\mathbb{Z}_{(q_n)} , R)$ is obtained taking a single vertex at each level, $\# V_n = 1$, $n\geq 0$, and setting $q_{n+1}$ edges in $E_{n+1}$.
As there is a unique vertex at each level, the ordering on the $E_n$'s is arbitrary. 
Consequently, odometers are of topological rank 1 and it is easy to prove that rank one Cantor systems are odometers.
See \cite{Durand:2010} for more details. 
We say it is a {\it one-vertex} Bratteli diagram.

Let us recall that a minimal Cantor system $(X,T)$ which is {\em equicontinuous} ({\em i.e.}, the family of maps $\{T^n\}_{n \in {\mathbb Z}}$ is equicontinuous) is conjugate to an odometer, and, the  two following well known results  (see \cite{Kurka:2003}). 
%
%

\begin{lemma}\label{lem:factorodometer}
The following are equivalent :
\begin{enumerate}
\item
The odometer $({\mathbb Z}_{(q_{n}')} , R)$ is a factor of the odometer $({\mathbb Z}_{(q_{n})}, R)$;
\item
${\mathbb Q}_{(q_{n}')}$ is included in ${\mathbb Q}_{(q_{n})}$;
\item
For all $n$ there exists $k$ such that $p_{n}'$ divides $p_{k}$.
\end{enumerate}
\end{lemma}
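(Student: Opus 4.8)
The plan is to prove the three equivalences by establishing a cycle of implications, leaning on the duality between the odometer and its group of characters that the text has just set up. The cleanest route is to prove (1) $\Leftrightarrow$ (2) using Pontryagin-style duality, and (2) $\Leftrightarrow$ (3) by unwinding the definition of $\mathbb{Q}_{(q_n)}$ as the set of rationals with denominators dividing some $p_n$.

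For the equivalence (1) $\Leftrightarrow$ (2), I would use the fact that an odometer is a compact abelian group on which $R$ acts by translation by the canonical generator $1$, and that such a translation is uniquely determined (as a minimal equicontinuous action) by the group. A factor map $\varphi\colon \mathbb{Z}_{(q_n)}\to \mathbb{Z}_{(q_n')}$ intertwining the two translations by $1$ must, after normalizing so that $\varphi(0)=0$, be a continuous surjective group homomorphism sending generator to generator; conversely any such homomorphism is a factor map. By Pontryagin duality, continuous surjective homomorphisms between these compact groups correspond contravariantly to injective homomorphisms of the dual discrete groups. Since the text identifies the dual of $\mathbb{Z}_{(q_n)}$ with $\mathbb{Q}_{(q_n)}$ (a subgroup of $\mathbb{Q}/\mathbb{Z}$, or of $\mathbb{Q}$ modulo integers), the existence of the surjection is equivalent to an injection $\mathbb{Q}_{(q_n')}\hookrightarrow \mathbb{Q}_{(q_n)}$, and since both are concretely realized as subgroups of the same ambient group $\mathbb{Q}$ of rationals, injectivity of the dual map is exactly the inclusion $\mathbb{Q}_{(q_n')}\subseteq \mathbb{Q}_{(q_n)}$. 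I expect this duality step to be the main obstacle, mostly in being careful that the abstract injection really amounts to set-theoretic inclusion of the two explicitly given subgroups rather than merely an abstract embedding.

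For (2) $\Leftrightarrow$ (3), I would argue directly from the description $\mathbb{Q}_{(q_n)}=\{\, j/p_n : j\in\mathbb{Z},\ n\geq 1 \,\}$. The inclusion $\mathbb{Q}_{(q_n')}\subseteq \mathbb{Q}_{(q_n)}$ holds if and only if each generator $1/p_n'$ of the smaller group lies in $\mathbb{Q}_{(q_n)}$, since these generate $\mathbb{Q}_{(q_n')}$ as a group. Now $1/p_n'\in\mathbb{Q}_{(q_n)}$ means $1/p_n' = j/p_k$ for some integer $j$ and some $k$, i.e. $p_n' j = p_k$ for... more precisely $p_k/p_n'$ is an integer, which says exactly that $p_n'$ divides $p_k$. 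Thus the inclusion of groups is equivalent to the divisibility condition (3). Both directions are immediate once one notes that the $(p_n)$ form an increasing divisibility-chain (each $p_n$ divides $p_{n+1}$ since $p_{n+1}=q_{n+1}p_n$), so that membership of a single fraction in the increasing union is equivalent to its denominator dividing some $p_k$.

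Finally I would assemble these into a proof, stating (1)$\Rightarrow$(2)$\Rightarrow$(3)$\Rightarrow$(1), or simply noting the two established biconditionals. An alternative to invoking Pontryagin duality for (1)$\Leftrightarrow$(2), which I would mention if I wanted a self-contained argument, is to build the factor map explicitly: if (3) holds, then passing to a subsequence of levels one gets compatible surjections $\mathbb{Z}/p_k\mathbb{Z}\to\mathbb{Z}/p_n'\mathbb{Z}$ commuting with addition by $1$, whose inverse limit is the desired factor map; conversely a factor map induces such compatible reductions and forces the divisibilities. This concrete inverse-limit argument sidesteps duality entirely and may be the more elementary presentation, so I would likely prove (1)$\Leftrightarrow$(3) directly by inverse limits and then fold in (2) via the fraction computation above.
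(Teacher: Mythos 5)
The paper never proves this lemma: it is presented as one of two ``well known results'' with a citation to K\r{u}rka's book, so there is no in-paper argument to compare yours against. Your proof is correct and is essentially the standard one, so it genuinely fills the gap. The (2) $\Leftrightarrow$ (3) computation with fractions is right (each $1/p_n'$ generates, and $1/p_n' = j/p_k$ is exactly $p_n' \mid p_k$). Your concrete route for (1) $\Leftrightarrow$ (3) is also sound: normalize a factor map $\varphi$ by a translation so that $\varphi(0)=0$, observe $\varphi(n)=n$ on the canonical dense copy of $\mathbb{Z}$ and hence, by continuity of the group operations, that $\varphi$ is a surjective continuous homomorphism sending $1$ to $1$; composing with the projection $\mathbb{Z}_{(q_n')} \to \mathbb{Z}/p_n'\mathbb{Z}$ gives a continuous homomorphism with open kernel, which therefore factors through some $\mathbb{Z}/p_k\mathbb{Z} \to \mathbb{Z}/p_n'\mathbb{Z}$ sending $1$ to $1$, forcing $p_n' \mid p_k$; conversely, the compatible reductions $\mathbb{Z}/p_k\mathbb{Z} \to \mathbb{Z}/p_n'\mathbb{Z}$ pass to the inverse limit to give the factor map. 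One caution on the duality version of (1) $\Rightarrow$ (2), precisely the point you flag yourself: injectivity of the abstract dual map is not by itself the set-theoretic inclusion, since subgroups of $\mathbb{Q}$ admit injections into one another that are not inclusions (multiplication by $2$ on $\mathbb{Z}$, say). What closes this is again that the normalized $\varphi$ sends generator to generator: the dual map $\chi \mapsto \chi \circ \varphi$ then preserves the value of a character at $1$, and since a character of an odometer is determined by its value at $1$ (density of $\mathbb{Z}$) and those values are exactly $e^{2\pi i \alpha}$ with $\alpha$ ranging over the respective group $\mathbb{Q}_{(q_n')}$ or $\mathbb{Q}_{(q_n)}$ taken modulo $\mathbb{Z}$, the dual map is literally the inclusion of rotation numbers. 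With that observation spelled out --- or by simply adopting the inverse-limit argument as you propose --- the proof is complete.
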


\begin{lemma}
\label{lemma:conjugate}
The following are equivalent.
\begin{enumerate}
\item
$({\mathbb Z}_{(q_{n}')} , R)$ and $({\mathbb Z}_{(q_{n})}, R)$ are conjugate.
\item
${\mathbb Q}_{(q_{n}')} = {\mathbb Q}_{(q_{n})}$.
\end{enumerate}
\end{lemma}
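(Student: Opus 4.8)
The plan is to deduce this from the factor characterization in Lemma \ref{lem:factorodometer}, exploiting that a conjugacy is nothing but a pair of mutually inverse factor maps, together with the rigidity of factor maps between odometers.

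For the implication (1) $\Rightarrow$ (2), I would suppose $\phi$ is a conjugacy between the two odometers. Then $\phi$ is a factor map from $({\mathbb Z}_{(q_{n})},R)$ onto $({\mathbb Z}_{(q_{n}')},R)$ and $\phi^{-1}$ is a factor map in the reverse direction. Applying the equivalence (1) $\Leftrightarrow$ (2) of Lemma \ref{lem:factorodometer} to each yields ${\mathbb Q}_{(q_{n}')} \subseteq {\mathbb Q}_{(q_{n})}$ and ${\mathbb Q}_{(q_{n})} \subseteq {\mathbb Q}_{(q_{n}')}$, hence equality.

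For (2) $\Rightarrow$ (1), the hypothesis ${\mathbb Q}_{(q_{n}')} = {\mathbb Q}_{(q_{n})}$ gives, again by Lemma \ref{lem:factorodometer}, factor maps $\pi : {\mathbb Z}_{(q_{n})} \to {\mathbb Z}_{(q_{n}')}$ and $\pi' : {\mathbb Z}_{(q_{n}')} \to {\mathbb Z}_{(q_{n})}$ in both directions. I would then consider the composition $\Phi = \pi' \circ \pi$, a continuous surjection of ${\mathbb Z}_{(q_{n})}$ onto itself commuting with $R$. The crucial observation is that such a self-factor-map must be a rotation: from $\Phi \circ R = R \circ \Phi$ one gets $\Phi(R^n 0) = R^n \Phi(0) = \Phi(0) + n$ for every $n \in \mathbb{Z}$; by minimality the orbit $\{R^n 0 : n \in \mathbb{Z}\}$ of the identity is dense, so approximating an arbitrary $x$ by integers $n_k \to x$ and using continuity of $\Phi$ and of the group operation gives $\Phi(x) = \Phi(0) + x$ for all $x$. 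Thus $\Phi$ is translation by $\Phi(0)$, in particular a bijection. Consequently $\pi$ is injective; being a continuous surjection between compact metric spaces it is a homeomorphism, and since it intertwines the two copies of $R$ it is the desired conjugacy.

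The only routine verifications I would still record are that the $R$-orbit of $0$ coincides with the dense set of integer elements (immediate from $R : x \mapsto x+1$ and minimality) and the limit passage $\Phi(0) + n_k \to \Phi(0) + x$. The one genuinely non-formal point, and the step I expect to be the main obstacle, is the passage from mutual factoring to conjugacy: for general minimal systems two-sided factoring need not force conjugacy, and it works here precisely because every $R$-equivariant continuous self-map of an odometer is forced to be a translation, hence invertible. This rigidity, rather than any computation, is what makes (2) $\Rightarrow$ (1) go through.
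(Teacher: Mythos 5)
Your proof is correct. Note that the paper itself offers no argument for this lemma: it is stated, together with Lemma \ref{lem:factorodometer}, as a well-known fact with a pointer to \cite{Kurka:2003}, so there is no in-paper proof to compare against. Your derivation is a clean way to make the paper self-contained at this point: the direction (1) $\Rightarrow$ (2) is an immediate double application of Lemma \ref{lem:factorodometer}, and for (2) $\Rightarrow$ (1) you correctly identify, and correctly prove, the one non-formal ingredient — that a continuous $R$-equivariant self-map $\Phi$ of an odometer must be the translation $x \mapsto \Phi(0)+x$ (equivariance pins $\Phi$ down on the dense orbit of $0$, and continuity of $\Phi$ and of the group operation extends this to all of ${\mathbb Z}_{(q_n)}$), so that $\Phi=\pi'\circ\pi$ is bijective, forcing the factor map $\pi$ to be a continuous bijection of compact metric spaces and hence a conjugacy. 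This is the standard coalescence/rigidity argument for equicontinuous minimal systems, and your own caveat is the right one: for general minimal Cantor systems, two-sided factoring does not imply conjugacy, so the rigidity step is genuinely where the equicontinuity of odometers is used. An alternative route, closer in spirit to the paper's subsequent remark \eqref{relation:qpn}, would go through Pontryagin duality (conjugate odometers have isomorphic, here equal, dual groups ${\mathbb Q}_{(q_n)}\subset{\mathbb Q}/{\mathbb Z}$), but your argument avoids any structure theory beyond what Lemma \ref{lem:factorodometer} already provides.
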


This last condition is true if and only if for all prime number $p$ we have
\begin{align}
\label{relation:qpn}
\lim_n \max \left\{ k : p^k \hbox{ divides } p_n  \right\} = \lim_n \max \left\{ k : p^k \hbox{ divides } p_n'  \right\} .
\end{align}

As a corollary, we obtain the following. The proof is left to the reader.

\begin{corollary}
\label{coro:odoprime}
Let  $({\mathbb Z}_{(q_{n})} , R)$ be an odometer.
\begin{enumerate}
\item
Any permutation of the elements of the characteristic sequence will define the same odometer, up to conjugacy.
\item
$({\mathbb Z}_{(q_{n})},R)$ is conjugate to the odometer with characteristic sequence $(s_n)$ where $s_n$ are prime numbers such that $p_{i+1}/p_i = s_{n_i} s_{n_i+1} \cdots s_{n_{i+1}-1}$ for some strictly increasing sequence $(n_i)$ with $n_0=1$.
\end{enumerate}
\end{corollary}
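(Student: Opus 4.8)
The plan is to prove the two parts of Corollary \ref{coro:odoprime} directly from the characterization of conjugacy in Lemma \ref{lemma:conjugate} together with its reformulation via the $p$-adic valuation condition \eqref{relation:qpn}. The key observation is that both parts concern only the \emph{multiset} of prime factors accumulated along the characteristic sequence, and condition \eqref{relation:qpn} is manifestly insensitive to the order in which those primes appear.

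For part (1), suppose $(q_n')$ is obtained from $(q_n)$ by a permutation. First I would reduce to understanding how a permutation affects the quantities $\max\{k : p^k \text{ divides } p_n\}$. Since $p_n = q_n \cdots q_1$ accumulates all factors from the first $n$ terms, for a fixed prime $p$ the value $\max\{k : p^k \mid p_n\}$ is exactly the total exponent of $p$ in the product $q_1 \cdots q_n$, which equals $\sum_{i=1}^n v_p(q_i)$ where $v_p$ denotes the $p$-adic valuation. Permuting the $q_i$ permutes the summands but, crucially, the limit as $n \to \infty$ of these partial sums is the same: it is just $\sum_{i=1}^\infty v_p(q_i) \in \{0,1,2,\dots,\infty\}$, the total exponent of $p$ available across the whole sequence. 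Hence the limit in \eqref{relation:qpn} agrees for $(q_n)$ and any permutation $(q_n')$, and Lemma \ref{lemma:conjugate} gives the claimed conjugacy.

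For part (2), the idea is that replacing the sequence $(q_n)$ by the sequence $(s_n)$ of its prime factors (listed with multiplicity, grouped so that the product $s_{n_i} \cdots s_{n_{i+1}-1}$ recovers the ratio $p_{i+1}/p_i = q_{i+1}$) leaves the accumulated prime content entirely unchanged. Concretely, for each prime $p$ I would check that the partial exponent sums satisfy $\sum_{i : n_i \leq m} v_p(s_{n_i}\cdots s_{n_{i+1}-1}) = \sum_{j=1}^{k} v_p(q_{j})$ whenever the refinement up to index $m$ corresponds to the first $k$ terms $q_1,\dots,q_k$, because each block of $s$'s is exactly the prime factorization of one $q_j$. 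Taking limits, the two sides of \eqref{relation:qpn} again coincide, so the odometers are conjugate by Lemma \ref{lemma:conjugate}.

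The main conceptual point, and the only place requiring care, is recognizing that conjugacy of odometers depends solely on the supernatural number $\prod_p p^{\,\sum_i v_p(q_i)}$ (equivalently, on the total $p$-adic valuation for each prime), and then verifying that both operations --- permutation and refinement into primes --- preserve this invariant. Neither operation changes the multiset of prime factors collected over all of $\mathbb{N}$, so there is no genuine obstacle; the work is purely bookkeeping with valuations, which is precisely why the authors leave the proof to the reader. I would simply make the valuation invariant explicit and invoke \eqref{relation:qpn} in each case.
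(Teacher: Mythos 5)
Your proof is correct and follows exactly the route the paper intends: the corollary is presented as a direct consequence of Lemma \ref{lemma:conjugate} and the valuation criterion \eqref{relation:qpn}, with the verification left to the reader, and that verification is precisely your observation that both permutation and refinement into prime factors preserve the total $p$-adic valuation $\sum_i v_p(q_i)$ for every prime $p$. Both of your bookkeeping arguments (permutation invariance of sums of nonnegative terms, and monotone limits along the block-boundary subsequence $P_{n_{i+1}-1}=p_{i+1}$) are sound, so nothing is missing.
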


Thanks to these results we deduce a characterization of self induced odometers. 
The proof will use Bratteli diagrams. 
One can also prove this using the periodic structure of $\mathbb{Z}_{(q_n)}$. This would be, in some sense, easier if one is more familiar with $(q_n)$-adic groups than Bratteli diagrams.
However, we want to emphasize how useful Bratteli diagrams can be when studying Poincar\'e recurrence problems.

\begin{proposition}
\label{prop:chara_selfinduced_odo}
Let $({\mathbb Z}_{(q_{n})}, R)$ be an odometer. The following are equivalent.

\begin{enumerate}

\item
\label{item:TSI}
$({\mathbb Z}_{(q_{n})}, R)$ is self-induced.
\item
\label{item:primes}
There exists a prime number $q$ dividing infinitely many $q_n$, with $q_n=p_{n+1}/p_n$, {\em i.e.}, $\lim_n \max \left\{ k : q^k \hbox{ divides } p_n  \right\} = \infty $. 
\item
\label{item:homo}
There is a homomorphism from ${\mathbb Z}_{(q_{n})}$ to $\mathbb{Z}_p$  for some prime number $p$.
\item
\label{item:factorodo}
$(\mathbb{Z}_p , R )$ is a factor of $({\mathbb Z}_{(q_{n})} , R)$. 
\end{enumerate} 
\end{proposition}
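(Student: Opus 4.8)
The goal is to establish the equivalence of four conditions characterizing when an odometer $(\mathbb{Z}_{(q_n)}, R)$ is self-induced. My plan is to prove the cycle of implications $(\ref{item:factorodo}) \Rightarrow (\ref{item:TSI}) \Rightarrow (\ref{item:primes}) \Rightarrow (\ref{item:homo}) \Rightarrow (\ref{item:factorodo})$, handling first the easy equivalences among the last three ``algebraic'' conditions and then connecting the dynamical self-induction property via Bratteli diagrams.

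\medskip

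First I would dispatch the equivalences $(\ref{item:primes}) \Leftrightarrow (\ref{item:homo}) \Leftrightarrow (\ref{item:factorodo})$, which are essentially algebraic and follow from the earlier lemmas. The equivalence $(\ref{item:homo}) \Leftrightarrow (\ref{item:factorodo})$ is immediate: a continuous surjective group homomorphism $\mathbb{Z}_{(q_n)} \to \mathbb{Z}_p$ sends the generator $1$ to a topological generator and thus intertwines the two ``add $1$'' maps, giving exactly a factor map of odometers, and conversely any factor map of minimal equicontinuous systems can be taken to be a group homomorphism after translating to fix the identity. For $(\ref{item:primes}) \Leftrightarrow (\ref{item:factorodo})$, I would invoke Lemma \ref{lem:factorodometer}: the odometer $(\mathbb{Z}_p, R)$ with characteristic sequence $p,p,p,\ldots$ has associated integers $p_n' = p^n$, and condition (3) of that lemma says $(\mathbb{Z}_p,R)$ is a factor exactly when for every $n$ there is $k$ with $p^n \mid p_k$, which is precisely the statement $\lim_n \max\{k : p^k \text{ divides } p_n\} = \infty$ appearing in $(\ref{item:primes})$.

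\medskip

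The dynamical heart of the proof is the pair $(\ref{item:factorodo}) \Rightarrow (\ref{item:TSI})$ and $(\ref{item:TSI}) \Rightarrow (\ref{item:primes})$, and this is where I expect the real work. For $(\ref{item:factorodo}) \Rightarrow (\ref{item:TSI})$, suppose some $q$ satisfies $q^k \mid p_{n_k}$ with $n_k \to \infty$; by Corollary \ref{coro:odoprime}(1) I may reorder the characteristic sequence so that infinitely many entries equal $q$, and then group the remaining factors to write $(\mathbb{Z}_{(q_n)},R)$ as an odometer whose one-vertex Bratteli diagram has $q$ edges between infinitely many consecutive levels. Passing to a contraction, I arrange that \emph{every} level has exactly $q$ edges, i.e. a stationary (up to contraction) one-vertex diagram with $q$ edges at each level. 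Then I apply Proposition \ref{th:BVinduction} with $\texttt{P}$ a suitable set of paths: choosing $U$ to be the cylinder set corresponding to one of the $q$ edges at level $1$, the induced diagram $B'$ again has a single vertex per level and $q$ edges, hence is conjugate to the original odometer — this exhibits self-induction directly. For the converse $(\ref{item:TSI}) \Rightarrow (\ref{item:primes})$, I would argue that if no prime occurs to unbounded power in the $p_n$, then the odometer has no $\mathbb{Z}_p$ factor, and I would show this obstructs self-induction by a measure/divisibility argument: the unique invariant measure assigns to any clopen set a value in $\mathbb{Q}_{(q_n)}$, and a self-induction conjugacy would force, via Proposition \ref{prop:inducedmeasure}, that $\mu(U) = 1/N$ where $N$ is the common ``height,'' so that $N \in \mathbb{Q}_{(q_n)}^{-1}$ divides the $p_n$ structure; iterating as in Proposition \ref{prop:recuSelfInduce} produces arbitrarily high powers of some fixed prime dividing the $p_n$, contradicting boundedness.

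\medskip

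The main obstacle, I anticipate, is the implication $(\ref{item:TSI}) \Rightarrow (\ref{item:primes})$, because self-induction is an abstract conjugacy condition and one must extract arithmetic information from it. The clean way is again through Bratteli diagrams: a self-induced one-vertex diagram, after applying Proposition \ref{th:BVinduction}, must produce another one-vertex diagram (since the induced system, being conjugate to an odometer, has topological rank one), and the number of level-$1$ edges $|\texttt{P}|$ of the induced diagram equals the number of chosen paths. Tracking how the edge-counts $q_n$ transform under induction and demanding that the result be conjugate to the original odometer — via the invariant $\mathbb{Q}_{(q_n)}$ of Lemma \ref{lemma:conjugate} and its reformulation \eqref{relation:qpn} — forces some prime to appear with unbounded multiplicity in the $p_n$. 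Making the bookkeeping of path counts under contraction and induction precise, while ensuring the resulting diagram genuinely represents the \emph{same} odometer up to conjugacy rather than merely some odometer, is the delicate point I would spend the most care on.
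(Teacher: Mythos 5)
Your handling of the equivalences \eqref{item:primes}$\Leftrightarrow$\eqref{item:homo}$\Leftrightarrow$\eqref{item:factorodo} is fine (the paper leaves these as an exercise), but both dynamical implications have genuine problems. In \eqref{item:primes}$\Rightarrow$\eqref{item:TSI}, your claim that after reordering one can ``pass to a contraction'' so that \emph{every} level of the one-vertex diagram has exactly $q$ edges is false: contraction only multiplies consecutive edge counts, so a diagram that is stationary with $q$ edges per level represents the $q$-adic odometer, and your claim would make every odometer satisfying \eqref{item:primes} conjugate to $(\mathbb{Z}_q,R)$. The $6$-adic odometer (characteristic sequence $2,3,2,3,\dots$) satisfies \eqref{item:primes} with $q=2$, yet ${\mathbb Q}_{(6)}\neq{\mathbb Q}_{(2)}$, so by Lemma \ref{lemma:conjugate} it is not conjugate to $(\mathbb{Z}_2,R)$. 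Since your conclusion that the induced diagram ``again has a single vertex per level and $q$ edges, hence is conjugate to the original odometer'' rests on this stationarity, the implication is not proved. The repair is the paper's argument: arrange only $q_1=q$ by permutation, induce on a single level-$1$ edge to obtain the odometer with characteristic sequence $(q_{n+1})_{n\ge 1}$, and then conclude conjugacy to the original from \eqref{relation:qpn} and Lemma \ref{lemma:conjugate}, because deleting one occurrence of $q$ changes no prime-multiplicity limit precisely when $q$ divides infinitely many $q_n$.

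For \eqref{item:TSI}$\Rightarrow$\eqref{item:primes} you offer two sketches, neither of which closes the argument. The measure sketch asserts $\mu(U)=1/N$ for a ``common height'' $N$; this is unjustified (return times to a clopen subset of an odometer need not be constant, and self-induction does not force $\mu(U)$ to be the reciprocal of an integer). The underlying idea can nevertheless be made into a correct proof, genuinely different from the paper's: write $\mu(U)=a/b$ in lowest terms with $b\ge 2$; unique ergodicity and Proposition \ref{prop:inducedmeasure} give $\mu(\varphi(B))=\mu(U)\mu(B)$ for Borel $B$, so by Proposition \ref{prop:recuSelfInduce} each $\varphi^k(X)$ is clopen of measure $a^k/b^k$ (still in lowest terms); since every clopen set of the odometer has measure in ${\mathbb Q}_{(q_n)}$, every $b^k$ must divide some $p_m$, and any prime $p$ dividing $b$ then witnesses \eqref{item:primes}. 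Your Bratteli sketch, on the other hand, is exactly the paper's route, but you explicitly defer the decisive computation (``the delicate point I would spend the most care on''), which is where the whole content lies: Proposition \ref{th:BVinduction} gives the induced characteristic sequence with $q'_1<q_1\cdots q_{n_0}$ and $q'_n=q_{n+n_0-1}$ for $n\ge 2$, and if all prime-multiplicity limits were finite, equality of the limits for the two sequences (Lemma \ref{lemma:conjugate}) would force, prime by prime, $q'_1=q_1\cdots q_{n_0}$, a contradiction. As submitted, neither \eqref{item:primes}$\Rightarrow$\eqref{item:TSI} nor \eqref{item:TSI}$\Rightarrow$\eqref{item:primes} is established.
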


\begin{proof}
We leave as an exercise to prove that \eqref{item:primes}, \eqref{item:homo} and \eqref{item:factorodo} are equivalent.
We prove \eqref{item:primes} and \eqref{item:TSI} are equivalent.


\eqref{item:primes} $\implies$ \eqref{item:TSI}
Again using Corollary \ref{coro:odoprime}, we may assume $p_1=q_1=q$. 
Then, Lemma \ref{lemma:conjugate} together  with Equality  \eqref{relation:qpn} imply   that the odometer
with characteristic sequence $(q_n)_{n \geq 1}$ is conjugate to the odometer with characteristic sequence
 $(q_{n+1})_{n \geq 1}$.

Let $B$ be the one-vertex Bratteli diagram with $q_n$ edges in $E_n$ for $n \geq 1$. 
Select a single edge $e$ from $E_1$ and let $U=[e]$ ({\em i.e.}, the clopen set of infinite paths 
starting with $e$).
 Proposition \ref{th:BVinduction} gives us that the induced system on $U$ is conjugate to an odometer with characteristic sequence $(q_{n+1})$.
Thus, $({\mathbb Z}_{(q_{n})}, R)$ is self-induced.

\eqref{item:TSI} $\implies$ \eqref{item:primes}
Suppose $({\mathbb Z}_{(q_{n})}, R)$ is self-induced.
There exists a proper clopen set $U$ such that $(U, R_U)$ is conjugate to $({\mathbb Z}_{(q_{n})}, R)$.
Since $(U, R_U)$ is an induced system of $({\mathbb Z}_{(q_{n})}, R)$, Proposition \ref{th:BVinduction} implies $(U, R_U)$ is an odometer with characteristic sequence $(q_n')$ where, for all $n\geq 2$, $q_n' = q_{n+n_0-1}$, for some $n_0$, and $q_{1}' < q_1 q_2 \cdots q_{n_0}$.

By contradiction, let us assume that there is no prime number $p$ such that $\lim_n \max \left\{ k : p^k \hbox{ divides } q_1q_2 \cdots q_n  \right\} = \infty$. 
Then, from Lemma \ref{lemma:conjugate}, for all prime numbers $p$, we should have 
\begin{align*}
& \lim_n \max \left\{ k : p^k \hbox{ divides } q_1 q_2 \cdots q_n  \right\} \\
= & \lim_n \max \left\{ k : p^k \hbox{ divides } q_1' q_{n_0+1} \cdots q_{n+n_0-1}  \right\} < \infty .
\end{align*}
This is  in contradiction with the fact that $q_{1}'$ is strictly less than $ q_1 q_2 \cdots q_{n_0}$.
\end{proof}

\subsection{Expansive case: the substitution subshifts}

In this section we prove that a self-induced minimal Cantor system $(X,T)$ 
is expansive if and only if it is conjugate to a substitution subshift on a finite alphabet. 
This result may be of interest on its own, it 
generalizes a similar result in the case where the induced map is of the form $T^k$ for some fixed $k$ \cite{Durand:thesis}.

Recall that a homeomorphism $T:X \to X$ on a compact metric space 
$(X,d)$ is \emph{expansive} if there exists a constant 
$\delta >0$ such that given any $x,y \in X$,  $x \neq y$, there
exists an $n \in \mathbb{Z}$ such that $d(T^nx,T^ny) > \delta$. A classical theorem 
states that if $X$ is a Cantor space and $(X,T)$ is an expansive
dynamical system then, $(X,T)$ is conjugate to a subshift. 

\subsubsection{Subshifts and induction}
\label{subsec:inducedsubshifts}

Let $\mathcal{A}$ be an {\em alphabet}, that is, a finite set.
We endow $\mathcal{A}^{\mathbb{Z}}$ with the infinite product of the discrete topology
and consider the shift map $S: \mathcal{A}^{\mathbb{Z}} \to \mathcal{A}^{\mathbb{Z}}$. 
That is, for a sequence 
$x = (x_i) \in \mathcal{A}^{\mathbb{Z}}$,  $S(x)$ is the sequence defined by $S(x)_i = x_{i+1}$. 
By a {\it subshift} on $\mathcal{A}$ we shall mean a couple $(X,S)$ where 
$X \subset \mathcal{A}^{\mathbb{Z}}$ is closed and shift-invariant ($S(X) = X$).
See \cite{Lind&Marcus:1995} for a thorough introduction to subshifts. 

A {\em word} is an element of the free monoid $\mathcal{A}^{*}$ generated by $\mathcal{A}$ where the neutral element is denoted by $\epsilon$ and is called the {\em empty word}. 
For $w = w_1 \ldots w_n \in \mathcal{A}^*$, $w_i \in \mathcal{A}$, we use 
$|w|$ to denote the {\em length} of $w$, that is $|w| =n$.
We denote by $\mathcal{A}^n$ the set of words of length $n$.
We set $\mathcal{A}^{+} = \mathcal{A}^{*} \setminus  \{ \epsilon \}$.

For a point $x = (x_i) \in \mathcal{A}^{\mathbb{Z}}$, and $i,j \in \mathbb{Z}$ with $i\leq j$ 
let $x[i,j]$ denote the word $x[i,j] = x_i \ldots x_j$.
We say $i$ is an occurrence of the word $u$ in $x$ if $u=x[i,i+|u|-1]$.
We keep the same notation for finite words.

For a subshift $(X,S)$, we call {\em language} of $X$ the subset of $\mathcal{A}^*$ defined by 
$\mathcal{L} (X) = \{ x[i,j] : x \in X , i\leq j\}$.
We set $\mathcal{L}_k(X) = \{ x[i,i+k-1] : x \in X , i\in \mathbb{Z} \}$.
A base for the subspace topology on $X$ consists of the family of the sets
$$
[w.v]_X = \{ x\in X : x[-|w|,|v|-1] = wv \},
$$
where $w,v \in \mathcal{A}^*$.
These sets $[w.v]_X$ are called {\it cylinders}. 
When it will not create confusion we will write $[w.v]$ instead of $[w.v]_{X}$. 
We set $[v] = [\epsilon . v]$, where $\epsilon$ is the empty word of $\mathcal{A}^*$.

Every shift commuting continuous function $F : X \to Y$ between two subshifts is given 
by a \emph{sliding block code} \cite{Lind&Marcus:1995}. 
That is, a map $f$ from the set of words of length $2r+1$ in $X$ to 
letters appearing in elements of $Y$ such that for all $x \in X$ and $i \in \mathbb{Z}$, 
$F (x)_i = f (x[i-r,i+r])$.  

Suppose $X \subset \mathcal{A}^{\mathbb Z}$ is a subshift. 
The \emph{$k$-block presentation of $X$} is the subshift 
$X^{[k]}$ with alphabet $\mathcal{A}^k$ defined as the set of the sequences $(w_i) \in \left(\mathcal{A}^k \right)^{\mathbb{Z}}$ such that
\begin{enumerate}
\item
each $w_i=w_{i,1}w_{i,2}\ldots w_{i,k}$ is in $\mathcal{L}_k (X)$,
\item 
$w_{i,2}\ldots w_{i,k}=w_{i+1, 1}\ldots w_{i+1 , k-1}$ for all $i \in \mathbb{Z}$,
\item 
the sequence $(w_{i,1})$ is in $X$. 
\end{enumerate} 
It is well-known that the shift map on the $k$-block presentation of $X$ is conjugate to the shift map on $X$
 \cite{Lind&Marcus:1995}. 

Let us explain a way to see the induction process via words in minimal subshifts.
Let $(X,S)$ be a minimal subshift defined on the alphabet $\mathcal{A}$ and 
$U$ be a clopen subset of $X$.
Then, there exist words $u_1^-, \dots , u_n^-, u_1^+, \dots u_n^+$ such that 
$U = \cup_{i=1}^n [u_i^- . u_i^+]$.
Observe that these words can be chosen of equal lengths.

Let $\mathcal{R}_{X,U} \subset \mathcal{L}(X)$ 
be the set of {\em return words to} $U$, that is, the set of words
$x[i,j-1]$, with $x\in X$ and $i<j$, where 
\begin{enumerate}
\item
$S^ix$ belongs to $U$;
\item
$S^j x$ belongs to $U$;
\item
there is no integer $k$, with $i < k < j$, such that $S^k x$ belongs to $U$.
\end{enumerate}
Since the subshift $(X,S)$ is minimal then, the length of the return words  to $U$ is finite and
the set of return words to $U$ is also finite.
 
When $U$ consists of a single word and $u^-$ is the empty word, then we recover the classical notion of return words (see \cite{Durand:thesis,Ferenczi&Mauduit&Nogueira:1996,Durand:1998}).
In this situation, $U = [\epsilon . u^+]$, the return words $w$ are characterized by the fact that 
$ u^+$ is a prefix and a suffix of $wu^+$, and, $wu^+$ has no other occurrence of $ u^+$ than these two (see \cite{Durand&Host&Skau:1999}).

The general situation is a bit different. 
We should keep the information of which $u_i^\pm$ precedes and follows a return word $w$, for each occurrences of $w$: $u_i^- w u_j^+$ and $u_k^- w u_l^+$ could belong to $\mathcal{L} (X)$ for different $i,j,k,l$.  
Thus, this return word $w$ represents two different ``returns'' to $U$ (because it returns to different cylinders describing $U$).
To this end we set
$$
\tilde{\mathcal{R}}_{X,U} = \{ (u^- , w,u^+) : u^-wu^+\in \mathcal{L} (X) , w \in {\mathcal{R}}_{X,U}, u^-\in U^-, u^+ \in U^+ \} , 
$$ 
where $U^- = \{ u_1^- , \dots , u_n^- \}$ and $U^+ = \{ u_1^+ , \dots , u_n^+ \}$.

Let ${\theta}$  be the morphism from  $\tilde{\mathcal{R}}_{X,U}^*$ to $\mathcal{A}^*$ defined by $\theta  (u^- , w,u^+) = w$. 
As usual, we extend by concatenation $\theta$ to a map, also denoted $\theta$, from  $\tilde{\mathcal{R}}_{X,U}^\mathbb{Z}$ to $\mathcal{A}^\mathbb{Z}$.
It is easy to check that for each $x\in U$ there is a unique $y\in \tilde{\mathcal{R}}_{X,U}^{\mathbb Z}$ such that $x=\theta (y)$. 
Let $Y$ be the set of all these $y$. 
It is invariant under the shift and hence $(Y,S)$ is a subshift on the alphabet $\tilde{\mathcal{R}}_{X,U}$.

We left as an exercise to prove that $\theta : (Y,S) \to (U,S_U)$ is a conjugacy map. 
For the ideas to use we refer to \cite{Durand&Host&Skau:1999} where this is treated when $U = [u^- . u^+]$.

\subsubsection{Substitutive and primitive substitution subshifts}

We recall here a small am\-ount of the background necessary for this section on 
minimal substitution subshifts. For a much more in depth introduction 
see \cite{Durand&Host&Skau:1999,Fogg:2002, Queffelec:1987}.

Suppose we have an alphabet $\mathcal{A}$ and a map $\sigma:\mathcal{A} \to \mathcal{A}^+$. 
As usual, a substitution $\sigma$ on an alphabet space $\mathcal{A}$ can be extended to
functions $\sigma:\mathcal{A}^{+} \to \mathcal{A}^{+}$ by concatenation, 
and therefore iterated. 

Because we wish to focus on minimal systems, we will assume that all of our
substitutions are \emph{primitive} {\em i.e.}, the following two properties hold. 
\begin{itemize}
\item there is an $n \in \mathbb{N}$ such that for any two letters $a,b \in \mathcal{A}$, 
the letter $b$ appears in the word $\sigma^n(a)$,
\item there is an $a \in \mathcal{A}$ such that $\lim_{k \to \infty}|\sigma^k(a)| = \infty$. 
\end{itemize}

Given a primitive substitution $\sigma$, let $X_{\sigma}$ denote the set of sequences $x$ in 
$\mathcal{A}^{\mathbb{Z}}$ with the property that every word that appears in $x$ is a 
subword of $\sigma^k(a)$ for some $a \in \mathcal{A}$ and $k$. 
This set is $S$-invariant.
As it will not create confusion, we continue to denote by $S$ the restriction of $S$ to $X_\sigma$.
The couple $(X_\sigma , S)$ is called {\em primitive substitution subshift} (see \cite{Queffelec:1987} for more details on these subshifts).

In the literature one can find less restrictive definitions of substitution subshifts that are, up to conjugacy, irrelevant in the minimal context as shown by Proposition \ref{prop:sub=sub} below. 
For instance, let us present a general definition of {\em substitutive subshifts}.
Let $\sigma $ be an endomorphism of the free monoid $\mathcal{A}^{*}$ and let $\phi : \mathcal{A}^{*} \to \mathcal{B}^{*}$, for some finite alphabet $\mathcal{B}$.
Let $X$ be the set of infinite sequences $x$ in $\mathcal{B}^{\mathbb{Z}}$ with the property that every word that appears in $x$ is a 
subword of $\phi \circ \sigma^k(a)$ for some $a \in \mathcal{A}$ and $k$. 
It could happen that $X$ is empty but it is a closed $S$-invariant subset of $\mathcal{B}^{\mathbb{Z}}$.
We say $(X,S)$ is a {\it substitutive subshift}.
Within the category of minimal subshifts, the following result shows that our primitiveness hypothesis on the substitution is not restrictive.
In fact, it proves that if you are given a non-primitive substitution generating a minimal subshift $(X,S)$ then, there is a primitive substitution whose subshift is conjugate to $(X,S)$.
 
\begin{proposition}
\cite{Durand&Host&Skau:1999,Durand:2013}
\label{prop:sub=sub}
Let $(X,S)$ be a minimal subshift.
The following are equivalent.
\begin{enumerate}
\item
$(X,S)$ is conjugate to a substitutive subshift.
\item
$(X,S)$ is conjugate to a primitive substitution subshift.
\end{enumerate}
\end{proposition}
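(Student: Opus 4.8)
The implication $(2) \Rightarrow (1)$ is immediate: a primitive substitution subshift is the special case of a substitutive subshift obtained by taking $\mathcal{B} = \mathcal{A}$ and $\phi = \mathrm{id}_{\mathcal{A}^*}$, the identity coding, so the entire content lies in the converse. By transitivity of conjugacy I may assume that $(X,S)$ \emph{is} a substitutive subshift, with $X \subseteq \mathcal{B}^{\mathbb{Z}}$ minimal and defining data $\sigma : \mathcal{A}^* \to \mathcal{A}^*$ and $\phi : \mathcal{A}^* \to \mathcal{B}^*$, so that every word occurring in a point of $X$ is a subword of some $\phi \circ \sigma^k(a)$. The plan is to normalize the pair $(\sigma,\phi)$ through a sequence of reductions, each preserving the subshift up to conjugacy, until $\sigma$ becomes primitive and $\phi$ can be discarded.

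First I would invoke the classical reduction of morphic words (see \cite{Durand&Host&Skau:1999,Durand:2013} and the references therein): any sequence of the form $\phi \circ \sigma^k(a)$, with $\sigma,\phi$ arbitrary morphisms, can be rewritten as the image under a \emph{coding} (a letter-to-letter morphism) of a fixed point of a \emph{non-erasing} substitution. Hence I may assume $\phi$ is letter-to-letter and $\sigma$ erases no letter. Next I would clean the alphabet: since $(X,S)$ is minimal it is infinite, so some letter $a$ satisfies $|\sigma^k(a)| \to \infty$; discarding the letters of $\mathcal{A}$ that fail to occur in arbitrarily long iterates and passing to a suitable power of $\sigma$, I obtain a substitution stabilizing a sub-alphabet $\mathcal{A}'$ on which every letter grows, which secures the growth half of the primitiveness hypothesis stated above.

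For primitivity itself I would use minimality crucially. Unique recurrence of $(X,S)$ forces every letter of $\mathcal{A}'$ to recur with bounded gaps in the generating fixed point; combined with the growth just obtained, this yields an integer $n$ such that every letter of $\mathcal{A}'$ occurs in $\sigma^n(b)$ for all $b \in \mathcal{A}'$, that is, $\tau := \sigma^n|_{\mathcal{A}'}$ is primitive. Finally I would eliminate the coding: $(X,S)$ is now the letter-to-letter image of the primitive substitution subshift $(X_\tau, S)$, and the return-word construction of Subsection~\ref{subsec:inducedsubshifts}, developed in detail in \cite{Durand&Host&Skau:1999}, shows that such a minimal coding image is again conjugate to a primitive substitution subshift, completing the argument.

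The step I expect to be the main obstacle is the passage to primitivity together with the removal of the coding. An a priori substitutive presentation can be badly behaved — erasing, non-growing, with reducible incidence structure — and the real work is to use unique recurrence of the minimal system to prune and re-power $\sigma$ without altering the generated subshift, and then to promote the letter-to-letter image back to a genuine primitive substitution subshift rather than a mere factor. This last point is precisely where the return-word, or derived-substitution, machinery is indispensable.
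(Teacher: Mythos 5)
A preliminary remark: the paper itself offers no proof of Proposition~\ref{prop:sub=sub}; it imports the result from \cite{Durand&Host&Skau:1999,Durand:2013}, so your proposal has to be measured against the arguments in those references. Your direction $(2)\Rightarrow(1)$ is correct and trivial, and your overall roadmap --- normalize $(\sigma,\phi)$ upstairs, then remove the letter-to-letter coding via return words/derived sequences --- does mirror the architecture of the cited proofs; in particular, the last step (a minimal coding image of a primitive substitution subshift is conjugate to a primitive substitution subshift) is genuinely the return-word/stationary Bratteli--Vershik content of \cite{Durand&Host&Skau:1999}.

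However, there is a genuine gap at your primitivity step, and it sits exactly where the real theorem lives. You claim that minimality of $(X,S)$ ``forces every letter of $\mathcal{A}'$ to recur with bounded gaps in the generating fixed point.'' This is false: minimality is a property of the \emph{coded} sequence in $\mathcal{B}^{\mathbb{Z}}$, and uniform recurrence does not lift through the coding $\phi$. Concretely, let $\tau$ be the Thue--Morse substitution $0\mapsto 01$, $1\mapsto 10$, and on $\mathcal{A}=\{a,0,1\}$ define $\sigma(a)=a1$, $\sigma(0)=01$, $\sigma(1)=10$, with $\phi(a)=0$ and $\phi$ the identity on $\{0,1\}$. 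An easy induction gives $\phi(\sigma^k(a))=\tau^k(0)$, so the substitutive subshift defined by $(\sigma,\phi)$ is the Thue--Morse subshift: minimal and aperiodic. Yet the letter $a$ occurs exactly once in the fixed point $\sigma^{\omega}(a)$; it survives your pruning (it grows, $|\sigma^k(a)|=2^k$, and occurs in arbitrarily long iterates), but it never occurs in $\sigma^k(0)$ or $\sigma^k(1)$, so \emph{no} power of $\sigma$ restricted to a $\sigma$-stable sub-alphabet containing $a$ is primitive --- your argument would assert a primitivity statement that is simply false here. In this toy example one can discard $a$ by hand, but your stated criterion does not do so, and in general no such upstairs pruning/re-powering is known to work: the substantive content of the proposition, proved in \cite{Durand:2013}, is precisely that a minimal (uniformly recurrent) substitutive subshift admits a \emph{primitive} substitutive presentation, and the proof there works downstairs on the coded sequence, via return words and derived sequences, not by cleaning up $\sigma$. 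As written, your proposal replaces that theorem by an incorrect elementary argument, while the rest of the sketch is sound modulo the cited literature.
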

For example the Chacon substitution $0\mapsto 0010$, $1\mapsto 1$ is not primitive but its subshift $(X,S)$ is minimal and it is conjugate to a primitive substitution subshift (see \cite{Durand:thesis}).

The following is well-known, e.g. see \cite[Cor 12]{Durand&Host&Skau:1999}.
It is a direct consequence of Moss\'e's result on recognizability (see \cite{Mosse:1992, Mosse:1996}).

\begin{proposition}\label{prop:SubSelf}
Aperiodic primitive substitution subshifts are self-induced minimal Cantor systems. 
\end{proposition}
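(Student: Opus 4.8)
The goal is to show that an aperiodic primitive substitution subshift $(X_\sigma, S)$ is self-induced, i.e., conjugate to one of its own induced systems on a proper clopen set. The plan is to exploit the self-similar structure encoded by the substitution $\sigma$ itself: applying $\sigma$ to a sequence should, after the appropriate identification, realize $X_\sigma$ inside itself as a return-word system.

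First I would fix a letter $a \in \mathcal{A}$ and consider the clopen set $U = [a]$ of all sequences beginning (at position $0$) with the letter $a$. The key tool is Moss\'e's recognizability theorem for aperiodic primitive substitutions, which guarantees that each $x \in X_\sigma$ has a unique decomposition into blocks of the form $\sigma(b)$, $b \in \mathcal{A}$, with a well-defined cutting structure; equivalently, the map $\sigma: X_\sigma \to X_\sigma$ (suitably interpreted on bi-infinite sequences, fixing a base point via a periodic point of $\sigma$) is injective and its image is recognizable. The plan is to use $\sigma$ to build the conjugacy: a point $y \in X_\sigma$ maps to a point $x \in X_\sigma$ whose $\sigma$-decomposition reads off the letters of $y$, and the positions where a new block $\sigma(b)$ begins are exactly the return times to $U$.

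The main steps I would carry out are as follows. I would first invoke the return-word machinery of Section \ref{subsec:inducedsubshifts}: taking $U=[a]$ (a single cylinder with empty left part), the induced system $(U, S_U)$ is conjugate, via the map $\theta$, to a subshift $(Y,S)$ on the alphabet of return words $\mathcal{R}_{X,U}$. Next I would show that the set of return words to $[a]$ is in bijection with the alphabet $\mathcal{A}$, with the return word associated to $b \in \mathcal{A}$ being precisely the block $\sigma(b)$ read between consecutive occurrences of $a$ dictated by recognizability — here primitivity ensures finiteness and minimality, and aperiodicity (via Moss\'e) ensures the decomposition is unique so the correspondence $b \mapsto \sigma(b)$ is a genuine bijection onto $\mathcal{R}_{X,U}$. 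Under this identification $\theta$ becomes exactly the substitution $\sigma$, and the return-word subshift $(Y,S)$ is nothing but $(X_\sigma, S)$ itself. Composing these identifications yields a conjugacy between $(X_\sigma, S)$ and $(U, S_U)$, which is the desired self-induction since $U$ is a proper clopen subset.

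The hard part will be the careful bookkeeping needed to identify the return words to $[a]$ with the blocks $\sigma(b)$ and to verify that the resulting alphabet bijection intertwines $\theta$ with $\sigma$ so that $(Y,S)$ is genuinely $(X_\sigma,S)$. This is where aperiodicity is essential: recognizability can fail for periodic substitution subshifts, and without the unique decomposition the correspondence between return words and letters may be many-to-one or ill-defined. Since the excerpt explicitly notes this proposition is a direct consequence of Moss\'e's recognizability result and cites \cite[Cor 12]{Durand&Host&Skau:1999}, I would lean on that recognizability statement rather than reprove it, and devote the writing to making the alphabet identification $\mathcal{A} \cong \mathcal{R}_{X,[a]}$ explicit. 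The remaining verifications — that $U$ is proper and clopen, and that $\theta$ is a conjugacy — are routine given the framework already established in Section \ref{subsec:inducedsubshifts}.
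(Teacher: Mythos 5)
Your overall strategy---realizing the self-induction through $\sigma$ itself and Moss\'e's recognizability---is the right one, but the clopen set you induce on is the wrong one, and the central identification of your argument is false. The return words to the cylinder $U=[a]$ are the words between consecutive occurrences of the \emph{letter} $a$, and these occurrences have, in general, nothing to do with the $\sigma$-block structure: recognizability cuts a sequence at the positions where words $\sigma(b)$ begin, not at the positions where $a$ occurs. Concretely, for the (aperiodic, primitive) Thue--Morse substitution $\sigma(0)=01$, $\sigma(1)=10$, the return words to $[0]$ are $\{0,01,011\}$---three of them, one a single letter---whereas $\{\sigma(0),\sigma(1)\}=\{01,10\}$; the word $10$ cannot even be a return word to $[0]$, since return words to $[0]$ must begin with $0$. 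So there is no bijection $\mathcal{A}\cong\mathcal{R}_{X,[a]}$ sending $b$ to $\sigma(b)$, the intertwining of $\theta$ with $\sigma$ you plan to verify does not exist, and the derived subshift $(Y,S)$ is a subshift on an alphabet of generally different cardinality; it is again substitutive (by Durand's derived-sequence theorem), but your argument gives no reason for it to be conjugate to $(X_\sigma,S)$. Your plan only works for the special substitutions in which some letter $a$ occurs in every word $\sigma(b)$ exactly once and exactly at the first position, like the example $\xi\colon j\mapsto 0(j+1)$ of Section \ref{sec:charSelfInduced}; Thue--Morse shows this is far from the general case.

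The repair is to induce not on $[a]$ but on $U=\widehat{\sigma}(X_\sigma)$, the set of points whose recognizability decomposition has a cut at the origin. Moss\'e's theorem \cite{Mosse:1992,Mosse:1996} is exactly what makes this set clopen and makes $\widehat{\sigma}\colon X_\sigma\to U$ injective, hence a homeomorphism onto $U$; the return time of $\widehat{\sigma}(x)$ to $U$ is $|\sigma(x_0)|$, so that $S_U\circ\widehat{\sigma}=\widehat{\sigma}\circ S$ and $\widehat{\sigma}$ is the desired conjugacy. (Replacing $\sigma$ by a power $\sigma^k$, which generates the same subshift, one may assume $|\sigma(b)|\geq 2$ for all $b$, so that $U$ is a proper subset.) Here your blocks-between-cuts picture is correct: the words read between consecutive visits to $U$ are precisely the $\sigma$-images of letters; the mistake was to conflate visits to $U$ with occurrences of a letter. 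This is the argument behind the citation \cite[Cor 12]{Durand&Host&Skau:1999}, and it is carried out in full, in the more general setting of alphabet spaces, in Theorem \ref{gen1}~\eqref{enum:rec} and Proposition \ref{prop:recsubgen} of this paper.
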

Our goal here is to prove the converse, that expansive self-induced minimal Cantor systems 
must be substitution subshifts. 
Observe that self-induced minimal Cantor systems are necessarily aperiodic.
\begin{theorem} \label{submain}
Let $(X,S)$ be a self-induced expansive minimal Cantor system. 
Then, it is conjugate to a substitution subshift.
\end{theorem}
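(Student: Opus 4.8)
The plan is to realize the self-induction as a substitution by combining the conjugacy onto the induced system with the return-word description of induction from Section \ref{subsec:inducedsubshifts}. Since $(X,S)$ is expansive on a Cantor set, I may assume it is a minimal subshift on a finite alphabet $\mathcal{A}$. Let $\varphi\colon (X,S)\to (U,S_U)$ be the self-induction conjugacy onto a proper clopen set $U$. Writing $U=\bigcup_{i=1}^n[u_i^-.u_i^+]$ and forming $\mathcal{B}=\tilde{\mathcal{R}}_{X,U}$ together with the morphism $\theta(u^-,w,u^+)=w$, the induced system $(U,S_U)$ is conjugate via $\theta$ to a subshift $(Y,S)$ on $\mathcal{B}$. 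Thus $\psi:=\theta^{-1}\circ\varphi$ is a conjugacy from $(X,S)$ to the subshift $(Y,S)$, and as a map between subshifts it is a sliding block code, say of radius $r$.

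The heart of the argument is to recode $\psi$ into an honest substitution. First, using Proposition \ref{wlog} I may take $U$ inside any prescribed clopen set, and I may also lengthen the defining words so that $|u_i^\pm|\ge r$ for all $i$; this guarantees that each triple $(u^-,w,u^+)\in\mathcal{B}$ records the $r$ letters of context on each side of the return word $w$. Next, passing to the $(2r+1)$-block presentation $\widehat X:=X^{[2r+1]}$, conjugate to $X$ by the canonical map $\pi$, turns $\psi\circ\pi$ into a $1$-block code given by a letter map $p\colon\widehat{\mathcal{A}}\to\mathcal{B}$. For each $\hat a\in\widehat{\mathcal{A}}$ I then define $\sigma(\hat a)$ to be the $\widehat{\mathcal{A}}$-word obtained by reading the consecutive $(2r+1)$-windows occupying the letters of $w$ inside the context word $u^-wu^+$, where $p(\hat a)=(u^-,w,u^+)$; the context of length $r$ on each side is exactly what is needed to form every such window, so $\sigma\colon\widehat{\mathcal{A}}\to\widehat{\mathcal{A}}^+$ is well defined. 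By construction $\pi^{-1}\circ\varphi\circ\pi$ coincides, position by position up to the natural anchoring at the origin, with the substitution action of $\sigma$, so that $\sigma$ produces the decomposition of $\varphi(\pi(\hat x))$ into return words for every $\hat x\in\widehat X$.

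It remains to see that $\sigma$ generates $\widehat X$. Iterating the conjugacy as in Proposition \ref{prop:recuSelfInduce} and using that $\bigcap_n\varphi^n(X)$ is null for every invariant measure (Proposition \ref{prop:uniftozero}), the images $\varphi^k(X)$ shrink, so the lengths $|\sigma^k(\hat a)|$ grow unboundedly, and minimality of $X$ forces every word of $\mathcal{L}(\widehat X)$ to appear inside some $\sigma^k(\hat a)$; hence $\widehat X=X_\sigma$ and $X$ is conjugate to the substitution subshift $X_\sigma$. Finally, since $X$ is minimal, Proposition \ref{prop:sub=sub} upgrades this to a conjugacy with a primitive substitution subshift, which proves the theorem. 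I expect the main obstacle to be the recoding step: verifying that the junctions between consecutive return words are encoded consistently --- this is what dictates both the choice $|u_i^\pm|\ge r$ and the passage to the $(2r+1)$-block presentation --- and checking that the lifted $\sigma$ reproduces $\widehat X$ rather than a strictly smaller subshift.
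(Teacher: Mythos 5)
Your overall strategy -- recode the induced system by return-word triples, pass to a block presentation so the conjugacy becomes letter-to-letter, read off a substitution, and finish with Proposition \ref{prop:sub=sub} -- has the same skeleton as the paper's proof, but two of your steps have genuine gaps. The first is a circularity in the recoding step. The radius $r$ is the radius of $\psi=\theta^{-1}\circ\varphi$, and $\theta$, $Y$ and the alphabet $\mathcal{B}=\tilde{\mathcal{R}}_{X,U}$ all depend on the chosen words $u_i^\pm$: lengthening those words changes the alphabet of triples, hence changes $\psi$ and its radius, so the instruction ``lengthen the defining words so that $|u_i^\pm|\geq r$'' chases its own tail. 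The new radius need not be dominated by the new context length: to determine the length-$L$ contexts of the return word at the origin one must know the neighbouring return words covering those $L$ positions, and since return words may have length $1$ this costs up to $L$ extra coordinates, giving only $r'\leq r+L$, which is not enough to conclude $L\geq r'$. A repair exists but must be argued: use Proposition \ref{wlog} (which you cite but never actually exploit) to arrange $U\cap S(U)=\emptyset$, so every return word has length at least $2$; then reconstructing length-$L$ contexts costs only about $L/2$ neighbours, the radius of the new conjugacy is at most a constant plus $\lceil L/2\rceil$, and taking $L$ large closes the inequality. The paper sidesteps the issue entirely: it keeps the original short contexts and instead takes a $(2l+1)$-block presentation of $Y$, so the needed context is supplied by the $l$ neighbouring triples; inequality \eqref{ineg}, proved via minimality, is exactly what guarantees those neighbours carry $r+l$ letters.

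The second gap is the inference ``the images $\varphi^k(X)$ shrink [in measure], so the lengths $|\sigma^k(\hat a)|$ grow unboundedly.'' The length $|\sigma^k(\hat a)|$ is a return time to the clopen set $\varphi^k(X)$, and Proposition \ref{prop:uniftozero} controls measures, not minimal return times: a clopen set of arbitrarily small measure can still contain points of bounded return time (Kac's formula bounds the \emph{mean} return time from below, not the minimum). You need $\min_{\hat a}|\sigma^k(\hat a)|\to\infty$ for the syndeticity argument that every word of $\mathcal{L}(\widehat{X})$ lies inside a single $\sigma^k$-word, and as stated nothing delivers it. Again the fix is $U\cap S(U)=\emptyset$: then the return time to $\varphi^k(X)$ is at least $2^k$ -- this is precisely the argument used in the paper's proof of Theorem \ref{gen2} -- or, alternatively, the paper's combinatorial argument for Theorem \ref{submain}: some letter has image of length at least $2$ (otherwise $U=X$), such letters occur syndetically by minimality, so images of sufficiently long words grow strictly under iteration. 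With these two repairs your construction does go through and becomes a legitimate variant of the paper's proof (contexts stored inside enriched triples rather than harvested from neighbouring ones); as written, however, both the hypothesis $|u_i^\pm|\geq r$ and the length-growth claim are unjustified.
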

\begin{proof}
Assume that $(X,S)$ is an expansive minimal Cantor system (hence a subshift on some alphabet $\mathcal{A}$, up to conjugacy) that is self-induced.
This means there is a clopen set $U\subsetneq X$ such that $(X,S)$ and $(U,S_U)$ are conjugate.
We set $U = \cup_{i=1}^N [u_i^-.u_i^+]$ for some words $u_i^\pm$.
Without loss of generality we can suppose that $|u_i^+|=|u_j^-|$, for all $i,j$.

Taking the notations of Section \ref{subsec:inducedsubshifts} and the observation we made, $(U,S_U)$ is isomorphic to some subshift $(Y,S)$ defined on the alphabet $\mathcal {R}=\tilde{\mathcal{R}}_{X,U}$.
Notice that the map $\theta$ defined in this section is such that the length $|\theta (a)|$ is greater or equal to $1$ for all $a\in \mathcal {R}$.

Let $f: \mathcal{A}^{2r+1} \to \mathcal{R}$ be a sliding block code defining a conjugacy from $(X,S)$ to $(Y,S)$.
We claim there exists an integer  $l$ such that
\begin{align}
\label{ineg}
|\theta (y_1 \dots y_l )| \geq r+l 
\end{align}
for all words $y_1 \dots y_l$ of $\mathcal{L}(Y)$.
Indeed, $(Y,S)$ being minimal it suffices to show there exists a letter $a\in \mathcal{R}$ such that $|\theta (a)| \geq 2$.
Suppose it is not the case. 
This means that $S^k x $ belongs to $U$ for all $k$ and $x\in X$.
Thus, from the minimality of $(X,S)$, we obtain that $U=X$ which contradicts the choice of $U$.

For all $n$, let $\mathcal{A}_n$ be the alphabet 
$$
\{ (x_{-n} \ldots x_{-1} x_0 x_1 \ldots x_{n}) : x_{-n} \ldots x_{-1} x_0 x_1 \ldots x_{n} \hbox{ is a word of } X \} .
$$

Recall that $X^{[2n+1]}$ denote the $(2n+1)$-block presentation of $X$, that is
$$
X^{[2n+1]} = \{ ((x_{k-n} \ldots x_{k-1} x_k x_{k+1} \ldots x_{k+n}))_k : (x_k)_k \in X \}.
$$

Of course the subshift $(X,S)$  is conjugate to $(X^{[2n+1]} , S)$, by the sliding block code $h_{n} \colon \mathcal{A}^{2n+1} \to \mathcal{A}_{n}$  defined as $h_{n}(x_{-n}\ldots x_{n}) = (x_{-n}\ldots x_{n})$.
In the same way we can define $\mathcal{R}_n$ and $Y^{[2n+1]}$.  The map $f$ naturally defines a sliding block code $g : \mathcal{A}_{l+r} \to \mathcal{R}_l$ asserting

\begin{align*}
g ( (x_{-(l+r)} \ldots x_{-1} x_0 x_1 \ldots x_{l+r})) =& (f (x_{-(l+r)} \ldots x_{-1} x_0 x_1 \ldots x_{l+r})) \\
= &(y_{-l} \ldots y_{-1} y_0 y_1 \ldots y_{l}) .
\end{align*}

This provides a conjugacy from $(X^{[2(r+l)+1]}, S)$ to $(Y^{[2l+1]} ,S)$.
To conclude, it suffices to prove that $(Y^{[2l+1]}, S)$ is a substitution subshift. 
For this, let us construct a morphism $\tau : \mathcal{R}^*_l \to \mathcal{A}_{l+r}^*$.   

For each  $b=(y_{-l} \ldots y_{-1}y_0 y_1 \dots y_{l})$ in $\mathcal{R}_{l}$, we set, for $ -l \le k \le l$,
$\theta(y_{k}) = x_{0}^{(k)}\ldots x_{|\theta(y_{k})|-1}^{(k)} \in \mathcal{A}^*$. So we have
$$ \theta(y_{-l} \dots y_{-1}y_0 y_1 \dots y_{l}) = x_{0}^{(-l)}\ldots x_{|\theta(y_{-l})|-1}^{(-l)} \ldots   x_{0}^{(l)}\ldots x_{|\theta(y_{l})|-1}^{(l)}.$$

Observe that by \eqref{ineg}, the length of the word     $\theta(y_{-l} \dots y_{-1}y_0 y_1 \dots y_{l})$, $\sum_{k=-l}^l |\theta(y_{k})|$ is greater than $2(r+l) +1$. 
Let $p$ (resp. $s$) be the prefix (resp.  the suffix) of  $\theta(y_{-l} \dots y_{-1}y_0 y_1 \dots y_{l})$ of respective  lengths   $\sum_{k=-l}^{-1} |\theta(y_{k})| -l-r$ and $\sum_{k=1}^l |\theta(y_{k})| -l-r$.  Let $w$ be the word of length $|\theta(y_{0})| +2 (l+r) $ such that $\theta(y_{-l} \dots y_0\dots y_{l}) =  pws$.
Finally we define 
$$
\tau (b) = h_{l+r} (w) \in \mathcal{A}_{l+r}^*,$$
where $h_{l+r}$ denote the sliding block code  $\mathcal{A}^{2(l+r)+1} \to \mathcal{A}_{l+r}$ .

It is plain to check that for every word $y_{i-l}y_{i-l+1} \ldots y_{j+l}$ in $\mathcal{L}(Y)$, with $i \le j$, the word $\tau((y_{i-l}\ldots y_{i} \ldots y_{i+l})(y_{i-l+1}\ldots y_{i+l+1})  \ldots (y_{i-l} \ldots  y_{j}\ldots y_{j+l}))$ is a factor of the word $h_{l+r}(\theta(y_{i-l}y_{i-l+1} \ldots y_{j+l}))$. 

Consequently $\tau $ sends words of $\mathcal{L} (Y^{[2l+1]})$ to words of $\mathcal{L}(X^{[2(r+l)+1]})$ and $g\circ \tau$ defines an endomorphism of $\mathcal{A}_{l}^*$ preserving the langage $\mathcal{L}(Y^{[2l+1]})$. 

We claim that for some $a\in \mathcal{R}_l$ the sequence of lengths $(| (g\circ \tau )^n (a)|)_n$ goes to infinity.
This claim enables us to conclude because it implies that  $g\circ \tau$ defines a non-empty substitution subshift included in $Y^{[2l+1]}$.
From the  minimality of $(Y^{[2l+1]}, S)$, both subshifts  coincide and we prove the theorem using Proposition \ref{prop:sub=sub}.

Let us prove the claim by contradiction. 
Thus, for all words $w\in \mathcal{R}_l^*$ the sequence $(|(g\circ \tau )^n (w)|)_n$ is bounded.
Let $a$ be a letter of $\mathcal{R}$ satisfying $|\theta (a)|\geq 2$ and $b = (y_{-l} \dots y_{-1}y_0 y_1 \dots y_{l}) \in \mathcal{R}_{l}$ where $y_0 = a$.
The system $(Y^{[2l+1]} , S)$ being minimal, there exists a constant $K$ such that any word $w\in \mathcal{R}_{l}^*$ of length $K$ has an occurrence of $b$.
Let $w$ be such a word.
It has an occurrence of $b$ and any other letter $c$ appearing in $w$ satisfies $|\theta (c)| \geq 1$. 
Thus, $|g\circ \tau  (w)|\geq |w|+1>K$ and consequently for all $n\ge 1$, $|(g\circ \tau )^{n} (w)| \geq |w| + n$.
We get a contradiction and this shows the claim.
\end{proof}


\subsection{The finite topological rank case}


We recall that the topological rank of a minimal Cantor system $(X,T)$ is the minimum over all its Bratteli-Vershik representations $B = ((V_{n}),(E_{n}), (\preceq_{n}) )$ of the quantity $\sup_n \# V_n$.
For example odometers have topological rank 1, Sturmian subshifts have topological rank 2 (see \cite{Dartnell&Durand&Maass:2000}), minimal substitution subshifts all have a finite topological rank (see \cite{Durand&Host&Skau:1999}), as the Cantor version of interval echange transformations (see \cite{Gjerde&Johansen:2002}). It is well-known  minimal Cantor systems with finite topological rank always have zero entropy. Let us mention that
Toeplitz subshifts can have finite or infinite topological rank as some are substitution subshifts and some have positive topological entropy.

In \cite{Downarowicz&Maass:2008} the authors proved the following nice result.

\begin{theorem}\cite{Downarowicz&Maass:2008}
A minimal Cantor systems with finite topological rank is necessarily expansive or equicontinuous. 
\end{theorem}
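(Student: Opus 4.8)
The plan is to prove the dichotomy in its contrapositive form: starting from a finite rank minimal Cantor system, I will show that if it fails to be equicontinuous then it must be expansive. First I would fix a Bratteli--Vershik representation $B=((V_n),(E_n),(\preceq_n))$ realizing the topological rank and apply a contraction so that, after discarding finitely many levels, $\#V_n=d$ is constant, every matrix $P(n+1,n)$ is strictly positive, and each level is proper in the sense that the maximal and minimal paths are unique. This normalizes the combinatorics and lets me work with the Kakutani--Rokhlin partitions $\mathcal{P}_n$ whose atoms are the sets $V_B^i\,B(v,n)$, where $B(v,n)$ is the base of the tower over a vertex $v\in V_n$ and $0\le i<h(v,n)$ is the height. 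These partitions refine one another and generate the topology, so everything reduces to asking at what level a \emph{single} partition, together with all its $V_B$-translates, already separates points.

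Second, I would introduce the coding map. For a fixed base level $m$, send each $x\in X_B$ to the bi-infinite sequence recording which atom of $\mathcal{P}_m$ contains $V_B^k x$ for every $k\in\mathbb{Z}$. Expansiveness is exactly injectivity of this map for some finite $m$; conversely, if the map is never injective the system carries arbitrarily long coincidences, and I will argue it is forced to be an odometer. The central object is, for each vertex $v\in V_n$, its \emph{spelling} $W(v,n)$: the word over the alphabet $V_m$ obtained by reading off, in order, the level-$m$ vertices traversed as one ascends the tower over $v$. Because the rank is bounded, at every level there are only $d$ such words, and the description of how the level-$(n+1)$ towers are built as ordered concatenations of level-$n$ towers (governed by $E_{n+1}$ together with $\preceq_{n+1}$) is a substitution-like morphism on a $d$-letter alphabet.

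Third comes the core combinatorial step. Suppose the coding map is injective at no level; then there are distinct points $x\ne y$ whose level-$m$ names agree on an interval of length tending to infinity as $m$ grows, and by minimality these near-coincidences recur densely. I would track how such a coincidence must thread consistently through the tower bases as the level increases: the Vershik carrying moves the point of disagreement between adjacent towers, and at each level the disagreement lives in the junction between two consecutive spellings $W(v,n)$. The decisive use of finite rank is a pigeonhole argument on the finitely many possible prefix/suffix coincidence patterns among the $d$ spellings: some pattern must recur at infinitely many levels, and a recurrence of this pattern forces the junction spellings to agree letter by letter, which propagates downward and degenerates the morphism. After a further contraction the diagram then reduces to a single vertex per level, i.e. an odometer, hence equicontinuous, contradicting the standing assumption. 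Therefore the coding map is injective at some finite level and the system is expansive.

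The main obstacle is precisely the reconciliation analysis of the third step: one must show that a coincidence of names between two distinct points cannot persist through all levels unless the whole diagram degenerates. The subtlety is the interaction between the vertex spellings and the order structure, since the point of disagreement can migrate across tower boundaries through successive Vershik carries, and bounding this migration by a level independent of the chosen points—so that separation happens at a uniform finite scale—is exactly what the hypothesis $\sup_n \#V_n \le d$ buys and what fails for the non-expansive, non-equicontinuous infinite-rank examples constructed later in this paper.
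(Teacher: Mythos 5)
First, a point of comparison: the paper does not prove this statement at all --- it is quoted from Downarowicz and Maass \cite{Downarowicz&Maass:2008} --- so there is no internal proof to measure your attempt against; it has to be judged against the actual content of that theorem. Your overall architecture (realize the rank by a Bratteli--Vershik diagram, contract so that $\#V_n=d$ with positive matrices and proper ordering, code points by the level-$m$ Kakutani--Rokhlin partition, note that expansiveness is injectivity of some such coding, and show that failure of injectivity at every level forces an odometer) is the right shape and matches the cited proof in broad outline.

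However, your third step has a genuine gap, in two respects. First, the pigeonhole on ``prefix/suffix coincidence patterns'' of the $d$ spellings tacitly assumes that two distinct points with the same level-$m$ coding have \emph{aligned} tower decompositions, i.e. that they enter the bases of the level-$n$ towers at the same times. Nothing forces this: the cut sequences of $x$ and $y$ can be offset, and the offset can vary with $n$. These non-aligned compatible pairs are precisely where rotation-like (odometer) behavior comes from, and handling them is the heart of the Downarowicz--Maass argument; your sketch does not address them, and a pattern-recurrence argument on aligned junctions cannot see them. Second, the intended conclusion --- ``after a further contraction the diagram reduces to a single vertex per level'' --- is unreachable in principle: a contraction of $((V_n),(E_n))$ keeps the vertex sets $V_{m_k}$ of the levels it retains, so a diagram with $d\geq 2$ vertices at every level only contracts to diagrams with those same vertex counts. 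For instance, the $2$-odometer admits representations with exactly two vertices at every level, and no contraction of such a diagram is a one-vertex diagram. So even granting letter-by-letter agreement of spellings, what must be proven is a dynamical statement --- that the system is conjugate to an odometer, equivalently equicontinuous --- by producing a conjugacy or a different representation, not by combinatorially degenerating the given diagram. As written, the proposal asserts the crux rather than proving it.
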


Thus, from the previous sections we deduce the following result.

\begin{corollary}
\label{theo:mainsub}
A minimal Cantor systems with finite topological rank is self-induced if and only if it is a minimal substitution subshift or an odometer fulfilling Property \eqref{item:primes} of Proposition \ref{prop:chara_selfinduced_odo}.
\end{corollary}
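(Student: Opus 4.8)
The plan is to obtain the corollary as a direct assembly of the results established in this section together with the Downarowicz--Maass dichotomy. Since all the substance has already been proved, the task is essentially bookkeeping: match the hypotheses of the cited results in each case of the dichotomy, and check that each family on the right-hand side genuinely sits inside the class of finite-rank systems.

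First I would treat the forward implication. Let $(X,T)$ be a minimal Cantor system of finite topological rank that is self-induced. By the theorem of Downarowicz and Maass quoted above, $(X,T)$ is either equicontinuous or expansive. In the equicontinuous case $(X,T)$ is conjugate to an odometer, and since it is self-induced, the implication \eqref{item:TSI} $\Rightarrow$ \eqref{item:primes} of Proposition \ref{prop:chara_selfinduced_odo} shows that it satisfies Property \eqref{item:primes}. In the expansive case $(X,T)$ is conjugate to a subshift, and being a self-induced expansive minimal Cantor system, Theorem \ref{submain} shows that it is conjugate to a substitution subshift. This exhausts the dichotomy and yields one direction.

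For the converse I would verify that both families on the right-hand side are of finite rank and self-induced. If $(X,T)$ is a minimal substitution subshift, then it has finite topological rank, as recalled at the start of this subsection, and, being a minimal Cantor system, it is aperiodic; by Proposition \ref{prop:sub=sub} it is conjugate to an aperiodic primitive substitution subshift, so Proposition \ref{prop:SubSelf} shows it is self-induced. If instead $(X,T)$ is an odometer satisfying Property \eqref{item:primes}, then it has topological rank $1$, hence finite rank, and the implication \eqref{item:primes} $\Rightarrow$ \eqref{item:TSI} of Proposition \ref{prop:chara_selfinduced_odo} shows it is self-induced.

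There is no genuine obstacle here, since the content was already expended in Theorem \ref{submain} and Proposition \ref{prop:chara_selfinduced_odo}. The only point requiring a modicum of care is the passage from the phrase \emph{minimal substitution subshift} to the aperiodic primitive hypothesis demanded by Proposition \ref{prop:SubSelf}, which is exactly what the automatic aperiodicity of minimal Cantor systems together with Proposition \ref{prop:sub=sub} supplies.
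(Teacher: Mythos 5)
Your proposal is correct and follows exactly the route the paper intends (the corollary is stated there without explicit proof, as an immediate assembly of the Downarowicz--Maass dichotomy, Proposition \ref{prop:chara_selfinduced_odo}, Theorem \ref{submain}, and Proposition \ref{prop:SubSelf}). Your added care about passing from ``minimal substitution subshift'' to the aperiodic primitive hypothesis via Proposition \ref{prop:sub=sub} and the automatic aperiodicity of minimal Cantor systems is exactly the right bookkeeping.
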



\section{Self-induced examples that are neither equicontinuous nor expansive}\label{sec:nonEquicnonExp}


In this section, we provide examples of self-induced minimal Cantor systems that are non-expansive and non-equicontinuous. 
The first one has  zero topological entropy whereas the second has an infinite topological entropy. To present them, we first recall the notion of {\em Toeplitz subshift}.

\subsection{Toeplitz subshift} 
A {\em Toeplitz sequence} on a finite alphabet $\mathcal A$ is a sequence $(x_{n})_{n\in \mathbb Z} \in {\mathcal A}^{\mathbb Z}$ such that $$\forall n \in \mathbb Z, \exists p \in \mathbb Z \hbox{ such that } x_{n+kp} = x_{n}, \forall k\in \mathbb Z.$$    
A  subshift $(X,S)$ is Toeplitz when  $X= \overline{\{S^{i}(x): i\in \mathbb Z\}}$ for a Toeplitz sequence $x$.  Let us recall some properties we need. We refer to \cite{DToep}  for a survey on Toeplitz subshifts. Any Toeplitz subshift is minimal and any  aperiodic Toeplitz subshift $(X,S)$ is an almost one-to-one extension of an odometer: {\em i.e.}, there exists a point with a unique preimage. Hence this odometer is the {\em maximal equicontinuous} factor of $(X,S)$, meaning that any equicontinuous factor of $(X,S)$ is a factor of this odometer.

\subsection{Self induced example with zero entropy}
\label{sec:noneqnonexp}

Our example is a product of a substitution subshift and 
an odometer. 
Let $\sigma$ be the substitution defined on the alphabet $\{ 0,1 \}$ by 
 
$$ 
\sigma(0) = 01 \textrm{ and   } \sigma (1)=00.
$$

Let $(X_{\sigma}, S)$ be the subshift it generates.
From a well-known result of Dekking \cite{Dekking:1978}, this is a minimal Cantor system and its maximal equicontinuous factor 
is the odometer defined on $\mathbb{Z}_2$.

Let consider the Cantor system $(X_{\sigma} \times \mathbb{Z}_{3} , T )$ defined by
\begin{eqnarray*}
T \colon X_{\sigma} \times \mathbb{Z}_{3} & \to & X_{\sigma} \times \mathbb{Z}_{3} \\
(x,z) & \mapsto & ( Sx , z+1).
\end{eqnarray*}

\begin{property}
The system $(X_{\sigma} \times \mathbb{Z}_{3} , T )$ is minimal.
\end{property}
Even if  it can be checked by using the very regular structure of the systems, we use a general result on Toeplitz systems in \cite{DToep} to prove it. 
\begin{proof}
From Lemma 12.1 in \cite{DToep} and since $(X_{\sigma},S)$ does not have a topological periodic factor of period 3, we deduce that the  $(X_{\sigma}, S)$ and $( \mathbb{Z}_{3} , z\mapsto z+1 )$ are measurably disjoint. Hence  the  product system has the product measure as
unique invariant probability measure, implying the minimality of the system. \end{proof}

\begin{property}
The system $(X_{\sigma} \times \mathbb{Z}_{3} , T )$ is neither expansive nor equicontinuous.
\end{property}
\begin{proof}
Let $d_1$ and $d_2$ be two distances defining the topologies of 
$X_\sigma$ and $\mathbb{Z}_3$, respectively.
The action of $T$ is not equicontinuous as it has an expansive factor, namely $(X_{\sigma}, S)$.

Let us show it is not expansive. Let $\epsilon>0$ be arbitrary, 
it is enough  to construct two points 
$(x,z) \neq (x',z') \in X_{\sigma} \times \mathbb{Z}_{3}$
such that $d_1(S^nx,S^nx') < \epsilon$ and 
$d_2(z+n,z'+n) < \epsilon$ for all integer $n$. 
Because the odometer $\mathbb{Z}_3$ is an isometry, 
we simply select any $x=x'$ and any $z \neq z'$ with 
$d_2(z,z') < \epsilon$. 
\end{proof}

Notice that the two previous claims also hold for the product of an  odometer  $(\mathbb{Z}_q , z\mapsto z+1)$ with a Toeplitz subshift having   $ (\mathbb{Z}_p , z \mapsto z+1)$ as maximal equicontinuous factor,  when $p$ and $q$ are coprime.

 \begin{property}
The minimal Cantor system $(X_{\sigma} \times \mathbb{Z}_{3}, T)$ is self-induced. 
 \end{property}

\begin{proof}
Define $\varphi: X_{\sigma} \times \mathbb{Z}_{3} \to \sigma(X_{\sigma}) \times \mathbb{Z}_{3}$ by 
$$
\varphi(x,z) = (\sigma(x),2 z) .
$$
The map $\sigma$ from $X_{\sigma}$ onto its image 
satisfies 
$S^2 \circ \sigma = \sigma \circ S$. 
Moreover, since the substitution is injective on the letters and of constant length,  the map $\sigma : X_\sigma \to \sigma (X_\sigma )$ is one-to-one. Thus,  the return time for the map $S$ to the clopen set  $\sigma(X_{\sigma})$ is constant and equal to 2.  
Let $U$ be the clopen set $U = \sigma(X_{\sigma}) \times \mathbb{Z}_{3}$.
Then, the induced map is 
$$ 
T_{U} : (x,z) \mapsto (S^2 x, z+2).
$$

Furthermore, we have 
$$T_U \circ \varphi (x) =   T_U (\sigma (x) , 2 z ) = (S^2 (\sigma (x)), 2z+2)$$
and
$$\varphi \circ T (x) =  \varphi (S(x) , z+1) = ( \sigma (S(x)) , 2(z +1)) = (S^2 (\sigma (x)) , 2z+2).$$

Hence, $T_U \circ \varphi = \varphi \circ T$, and $\varphi $ is a conjugacy and $(X_{\sigma } \times \mathbb{Z}_{3}, T)$ is 
self-induced.
\end{proof}

\subsection{Self-induced example of infinite entropy} 
\label{Example:non}
In this second example, we show that self-induced 
systems need not be uniquely ergodic, and need not be zero entropy.

The key ingredients will be that there exist Toeplitz subshifts that have a binary odometer as a factor, 
and that such systems can have positive entropy and can have multiple ergodic measures (see \cite{SWill} or \cite{DToep}).

Let $(K_0,S_0)$ be any Toeplitz subshift with the odometer 
$\mathbb{Z}_2$ as a factor.
The system $(K_0,S_0)$ is the orbit closure of a Toeplitz  sequence $z \in K_0$ that is {\em regularly recurrent} {\em i.e.},
 for any open neighborhood $U$ of $z$, there
is a $m \geq 0$ such that $S_0^{mn}(z) \in U$ for all $n \in \mathbb{Z}$. 
Because $(K_0,S_0)$ has $\mathbb{Z}_2$ as a factor, there is a sequence of 
 factor maps 
 $\pi_{n} \colon (K_{0}, S_{0}) \to ({\mathbb Z}/2^{n}{\mathbb Z}, x \mapsto x+1)$ such that 
 $\pi_{n}(z) = 0$. 

For each integer $n\ge 0$, let   $K_{n} := \pi_{n}^{-1}(0)$. We get that  
$K_0 \supset K_1 \supset K_2 \supset \cdots$  and for each $n$, 
$\{K_n, S_0 (K_n), \ldots, S_0^{2^n-1}(K_n) \}$ forms a clopen partition of $K_0$. 

For $n \geq 1$, let $S_n$ denote the induced map of $S_0$ 
on $K_n$, $S_n:K_n \to K_n$.  
So $S_n$ is equal to $S_{0}^{2^{n}}= S_{n-1}^2$ on $K_n$ for all $n \geq 1$. 

For $n <0$, we recursively define a sequence of spaces 
$K_0 \hookrightarrow K_{-1} \hookrightarrow K_{-2} \hookrightarrow \cdots$ 
and maps $S_n: K_n \to K_n$ so that $(K_{n-1},S_{n-1})$ is an 
exduced system of $(K_n,S_n)$ for the constant function 2. 
In other words, 
$K_{n-1} = K_n \times \{0,1\}$, 
$S_{n-1}:(x,0) \mapsto (x,1)$, and $S_{n-1}:(x,1) \mapsto (S_n x,0)$. 
We also observe $S_{n-1}^2 ( K_n  \times \{0\}) =  K_n \times \{0\}$ and 
$S_{n-1}^2:(x,0) \mapsto (S_n x,0)$.

Consider $\mathbf{K} = \prod_{n \in \mathbb{Z}} K_n$, and let 
$\textbf{z}$ denote the sequence $(z_n)_{n }$ where $z_n = z$ for all 
$n \geq 0$ and $z_{n-1} = (z_n,0)$ for $n \leq 0$.
Note that for all $n$, the point $z_n$ is regularly recurrent for the system
$(K_n,S_n)$. 

Define $\mathbf{T}: \textbf{K} \to \textbf{K}$, 
the map which takes the sequence $(x_n)_{n}$ to $(S_n(x_n))_{n}$. 
The map $\mathbf{T}$ is a homeomorphism 
since each $S_n$ is a homeomorphism. Consider $X \subset \textbf{K}$, the orbit closure of
$\textbf{z}$ under $\textbf{T}$.  

\begin{property}
The system $(X,\mathbf{T})$ is a minimal Cantor system. 
\end{property}

\begin{proof}
It suffices to show that $\textbf{z}$ is regularly recurrent for $\textbf{T}$.
Consider an open set $U\subset \textbf{K}$ containing $\textbf{z}$.
Without loss of generality, we
may assume $U$ is a basic set of the form 
$U= \prod U_n$ where $U_n = K_n$ for all $|n|\geq N$.

For each  $|n|<N$,  because $z_n \in K_n$ is regularly recurrent for $S_n$, there is a 
$p_n$ such that $S_n^{k p_n}(z_n) \in U_n$ for all $k \in \mathbb{Z}$. 
Let $p = \prod_{n=-N}^N p_n$. Then, for all $k \in \mathbb{Z}$, 
$\textbf{T}^{kp}(\textbf{z}) \in U$. Therefore, $\textbf{z}$ is regularly recurrent for $\textbf{T}$.
\end{proof}

\begin{property}
The system $(X,\mathbf{T})$ is self-induced.
\end{property}
\begin{proof}
Notice that there is a natural embedding $\varphi_n:K_{n+1} \hookrightarrow K_n$ for all $n \in \mathbb{Z}$: 
For $n \geq 0$, $\varphi_n$ is just inclusion, 
for $n \leq 0$, $\varphi_n(x_{n+1} ) = (x_{n+1} ,0)$, $x_{n+1} \in K_{n+1}$.
For all $n \in \mathbb{Z}$ we have the relation
$$
\varphi_n \circ  S_{n+1}=S_n^2 \circ  \varphi_n.
$$
Consider the homeomorphism $\varphi$ on the product space for which 
$\varphi(x)_n = \varphi_n(x_{n+1})$ for all integer $n$ where $x = (x_{n})_{n}$. Then, we have 
\begin{align}\label{eq:1}
\varphi \circ \textbf{T}=\textbf{T}^2 \circ \varphi.
\end{align}

We claim that $X$ is the disjoint union of $\varphi(X)$  and $\mathbf{T}(\varphi(X))$.
To prove this, observe that $\varphi(X) \cup \mathbf{T}(\varphi(X))$ is a closed and $\mathbf{T}^{2}$-invariant set by Equation \eqref{eq:1}.
So, by minimality,  we get $\varphi(X) \cup \mathbf{T}(\varphi(X) )=X$. 
Then, notice that $\varphi(X) \subset   \{(x_{n})_{n} \in X : x_0 \in K_1 \}$ and $\mathbf{T}(\varphi(X)) \subset   \{(x_{n})_{n} \in X : x_0 \in S_{0}(K_1) \}$. Hence these two sets are disjoint. 

It follows that $\varphi(X)$ is a clopen proper subset of $X$ and the induced map on $\varphi(X)$ is the map $\mathbf{T}^{2}$. Finally Equation \eqref{eq:1} enables to conclude the system $(X, \mathbf{T})$ is self-induced.
\end{proof}

From here we observe that the 
projection map from $(X,\textbf{T})$ onto the 
zero-th coordinate yields a factor map from 
$(X,\textbf{T})$ onto $(K_0,S_0)$.
By selecting $(K_0,S_0)$ in a particular way, we obtain 
self-induced examples with infinite entropy and which 
are not uniquely ergodic. 

\begin{proposition}
There exists a self-induced Cantor system with infinite entropy. 
\end{proposition}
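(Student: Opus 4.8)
The plan is to instantiate the general construction carried out in the three preceding properties with a carefully chosen seed system, and then to convert ``positive entropy'' into ``infinite entropy'' for free using the dichotomy of Proposition \ref{prop:entropy}. The construction already produces, from any Toeplitz subshift $(K_0,S_0)$ admitting $\mathbb{Z}_2$ as a factor, a minimal Cantor system $(X,\mathbf{T})$ that is self-induced; so the task reduces to choosing $(K_0,S_0)$ well and then computing a lower bound on the entropy of $(X,\mathbf{T})$.

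First I would invoke the existence, guaranteed by \cite{SWill} (see also \cite{DToep}), of a Toeplitz subshift $(K_0,S_0)$ that has the binary odometer $(\mathbb{Z}_2, z\mapsto z+1)$ as a topological factor and has strictly positive topological entropy. Feeding this $(K_0,S_0)$ into the construction yields the space $\mathbf{K}=\prod_{n\in\mathbb{Z}}K_n$, the point $\textbf{z}$, the homeomorphism $\mathbf{T}$, and the orbit closure $X=\overline{\{\mathbf{T}^i(\textbf{z}):i\in\mathbb{Z}\}}$. By the properties already established, $(X,\mathbf{T})$ is a minimal Cantor system and is self-induced, so no further work is needed on those two points.

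Next I would exploit the factor map noted just before the statement: the projection of $(X,\mathbf{T})$ onto its zero-th coordinate is a factor map onto $(K_0,S_0)$. Since topological entropy does not increase under factor maps, this gives $h_{\rm top}(\mathbf{T}) \geq h_{\rm top}(S_0) > 0$. Finally, because $(X,\mathbf{T})$ is self-induced, Proposition \ref{prop:entropy} forces $h_{\rm top}(\mathbf{T}) \in \{0,\infty\}$; combined with the strict positivity just obtained, this upgrades the entropy to $+\infty$, which is exactly the claim.

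The only genuine content of the argument is the existence of the seed Toeplitz system with both a binary odometer factor and positive entropy; once that input is granted, everything else is assembly. Accordingly, the main obstacle is not in the proof I would write but in the cited construction of such positive-entropy Toeplitz subshifts. I would also be careful about the direction of the entropy inequality (it is the factor $K_0$ whose entropy is dominated by that of the total system $X$, not the reverse), since it is precisely this direction, together with the $\{0,\infty\}$ dichotomy, that lets a merely \emph{positive} lower bound yield an \emph{infinite} value with no explicit entropy computation on $X$ itself.
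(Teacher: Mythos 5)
Your proposal is correct and follows exactly the paper's own argument: take a positive-entropy Toeplitz subshift with a binary odometer factor from \cite{SWill, DToep}, run the construction to get a self-induced system factoring onto it, and combine the entropy inequality for factors with the $\{0,\infty\}$ dichotomy of Proposition \ref{prop:entropy}. Nothing is missing, and your explicit caution about the direction of the entropy inequality matches the (implicit) reasoning in the paper's proof.
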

\begin{proof}
There exists a Toeplitz subshift $(K_0,S_0)$ with a binary odometer factor which have positive entropy
\cite{DToep, SWill}. 
We may use the above construction to produce a self-induced Cantor system $(X,\textbf{T})$
with $(K_0,S_0)$ as a factor. Since self-induced systems can only have entropy 0 or $\infty$, 
and the entropy of $(X,\textbf{T})$ is greater than that of $(K_0,S_0)$, it must be infinite. 
\end{proof}

\begin{proposition}
There exists a self-induced Cantor system which is not uniquely ergodic. 
\end{proposition}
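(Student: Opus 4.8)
The plan is to reuse verbatim the construction from Section~\ref{Example:non}, choosing the building block $(K_0,S_0)$ so as to destroy unique ergodicity rather than to inflate entropy. The whole construction of $(X,\mathbf T)$ from a Toeplitz subshift $(K_0,S_0)$ admitting a binary odometer factor is already shown to yield a self-induced minimal Cantor system, so I only need to exhibit a suitable $(K_0,S_0)$ and then transfer non-unique-ergodicity from the factor to $(X,\mathbf T)$.

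First I would invoke the existence of a Toeplitz subshift $(K_0,S_0)$ with a $\mathbb Z_2$-odometer factor that carries at least two distinct ergodic invariant probability measures; this is exactly the kind of example recorded in \cite{SWill} and surveyed in \cite{DToep}, the same source already cited for the positive-entropy case. Feeding this $(K_0,S_0)$ into the construction produces, by the two Properties proved above, a self-induced minimal Cantor system $(X,\mathbf T)$ together with the factor map onto $(K_0,S_0)$ given by projection onto the zero-th coordinate.

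The remaining point is to argue that $(X,\mathbf T)$ inherits non-unique-ergodicity from its factor. The key observation is that a factor map pushes invariant measures forward: if $\pi:(X,\mathbf T)\to(K_0,S_0)$ denotes the projection, then $\mu\mapsto \pi_*\mu$ maps $M(X,\mathbf T)$ onto $M(K_0,S_0)$ (surjectivity follows because any invariant measure on the factor lifts, e.g.\ by pulling back and averaging, or simply by compactness of the nonempty fibre $M(X,\mathbf T)\cap\pi_*^{-1}(\nu)$). Since $M(K_0,S_0)$ contains at least two distinct measures, its preimage $M(X,\mathbf T)$ cannot be a singleton, so $(X,\mathbf T)$ is not uniquely ergodic.

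The only delicate step is the surjectivity of $\pi_*$ onto $M(K_0,S_0)$, but this is the standard fact that every factor of a topological dynamical system realizes all of its invariant measures as pushforwards; I would state it as a one-line consequence of compactness of $M(X,\mathbf T)$ and continuity of $\pi_*$, exactly parallel to the entropy argument in the previous proposition where the inequality $h_{\rm top}(\mathbf T)\ge h_{\rm top}(S_0)$ was used. No genuine obstacle arises: the construction does the work, and the measure-theoretic transfer is routine.
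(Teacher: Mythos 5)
Your proposal is correct and follows essentially the same route as the paper: choose a Toeplitz subshift with a binary odometer factor and several ergodic measures (citing the same sources), run it through the construction of Section~\ref{Example:non}, and transfer non-unique ergodicity through the zero-th coordinate factor map via surjectivity of the pushforward on invariant measures. The only (inessential) difference is that you argue with the full simplex $M(X,\mathbf T)$ and the standard lift-and-average argument, whereas the paper phrases the same transfer as a surjection on ergodic measures.
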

\begin{proof}
There exist Toeplitz subshifts $(K_0,S_0)$ with a binary odometer factor which have several  ergodic 
measures \cite{DToep, SWill}. 
We may use the above construction to produce a self-induced Cantor system $(X,\textbf{T})$
with  $(K_0,S_0)$ as a factor. Such a map must induce a surjection from the set of 
ergodic measures for $(X,\textbf{T})$ onto the ergodic measures for $(K_0,S_0)$.
\end{proof}


\section{Self-Induced Systems as Generalized Substitutions}\label{sec:charSelfInduced}


We wish to give a characterization of all 
self-induced minimal Cantor systems as \emph{generalized substitution subshifts}. The term generalized here refers to the fact that our substitutions may be defined on an infinite (compact zero dimensional) alphabet. In this section we define generalized substitutions and show that a minimal Cantor system is self-induced if and only if it is conjugate to a generalized substitution subshift. 


\subsection{Generalized Subshifts}


We say that a topological space $K$ is an \emph{alphabet space} if it is a compact 
zero-dimensional metric space with at least two points. 
We will refer to the elements of an alphabet space as \emph{letters}.
A helpful example to keep in mind will be one where 
$K = \overline{\mathbb{N}_0} = \{0,1,\ldots ,\infty\}$, 
the one-point compactification of $\mathbb{N}_0$. We will continue
to develop this example throughout this section.

Let $K^n$ denote the set of words of length $n$ on $K$, which we will write as $u_1u_2\ldots u_n$
as opposed to the ordered $n$-tuple $(u_1,u_2,\ldots ,u_n)$, but the topology is the same. 
Let $K^{+}=\bigcup_{n \geq 1} K^n$ denote the set of all words on $K$.  
If $u\in K^{+}$, let $|u|$ denote the length of $u$, that is, $|u|=n$ if $u$ belongs to $K^n$.
If $d_K$ is a distance for $K$, then $d_{K^+}$ will be the distance on $K^+$ defined by $d_{K^+} (u,v) = \max \{ d_K (u_i , v_i) : i\leq \min (|u| , |v| ) \}$.

Elements of the product space $K^{\mathbb{Z}}$, endowed with the product topology, are bi-infinite sequences in the alphabet space $K$. For a sequence ${\bf x}= (x_i)$ in $K^{\mathbb{Z}}$, we will use the notation ${\bf x}[i,j]$ to denote the word 
$x_i x_{i+1} \ldots x_j$ belonging to $K^{j-i+1}$.
We may consider the action of the shift map $S$ on the space of all sequences in $K^{\mathbb{Z}}$, 
$S({\bf x})_i = x_{i+1}$ for every  ${\bf x}= (x_i) \in K^{\mathbb{Z}}$.

A \emph{generalized subshift}  is a couple $(\Omega,S)$  where $\Omega$ is a closed 
$S$-invariant subset of $K^{\mathbb{Z}}$.

Note that if $X$ is a Cantor set 
then, $X^{\mathbb{Z}}$ is also a Cantor set. Moreover, 
every continuous action of a Cantor set $T:X \to X$ is topologically conjugate to a generalized 
subshift via the conjugacy $h:X \to X^{\mathbb{Z}}$ where
$$
h:x \mapsto \ldots T^{-2}xT^{-1}x.xTxT^{2}x\ldots .
$$


\subsection{Generalized Substitutions}


In order to reasonably define a substitution on an alphabet space $K$, 
we need to deal with several topological considerations that are trivial in the case where $K$ is finite (and discrete). 
For a word $w \in K^+$ and $1 \leq j \leq |w|$, let $\pi_j(w)$ denote the $j$th letter of $w$. 
We say that $\sigma:K \to K^+$ is a \emph{generalized substitution on $K$} if 
$a \mapsto |\sigma(a)|$ is continuous and 
the projection map $\pi_j \circ \sigma$ is continuous on the set $\{a \in K : |\sigma(a)| \geq j\}$. 
The words $\sigma (z)$, $z\in K$, are called $\sigma${\em -words}.

For instance, we may define a generalized substitution $\xi$ on $\overline{\mathbb{N}_0}$
by 
$$
\xi:j \mapsto 0 (j+1), \hspace{1in} \xi: \infty \mapsto 0\infty .
$$

S. Ferenczi \cite{Ferenczi:2006} observed that the system it generates is minimal, uniquely ergodic and measure theoretically conjugate to the odometer on $\mathbb{Z}_2$.
More generally, in this paper substitutions on countable alphabets are investigated.

As in the case of a finite alphabet, a generalized substitution 
$\sigma$ on an alphabet space $K$ can be extended to a function $\sigma:K^{+} \to K^{+}$ by concatenation, and therefore iterated. 
Observe that $\sigma^k$ is again a generalized substitution for any $k$.
The generalized substitution map 
$\sigma$ also extends to a function ${\sigma}:K^{\mathbb{Z}}\to K^{\mathbb{Z}}$ in a similar manner. 
For ${\bf x }\in K^{\mathbb{Z}}$, the image ${\sigma}({\bf x})$ is the sequence formed by the concatenation of 
$\sigma(x_i)$ for all $i$, with $\sigma(x_0)$ starting at the origin
$${\sigma}: \ldots x_{-2}x_{-1}.x_0x_1x_2\ldots \mapsto 
\ldots \sigma(x_{-2})\sigma(x_{-1}).\sigma(x_0) \sigma(x_1) \sigma(x_2) \ldots $$

\begin{lemma}
Let $K$ be an alphabet space and $\sigma:K \to K^+$ a generalized substitution on $K$. Then, 
${\sigma}:K^{\mathbb{Z}} \to K^{\mathbb{Z}}$ is continuous. 
\end{lemma}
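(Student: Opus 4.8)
The plan is to reduce the statement to the continuity of each coordinate map $\mathbf{x} \mapsto \sigma(\mathbf{x})_j$, $j \in \mathbb{Z}$. Since $K^{\mathbb{Z}}$ carries the product topology, the universal property of products tells us that $\sigma$ is continuous if and only if every such scalar-valued coordinate map is continuous. So I would fix $j$ once and for all and concentrate on showing $\mathbf{x}\mapsto \sigma(\mathbf{x})_j$ is continuous. The one preliminary fact I would isolate first is that the length map $a \mapsto |\sigma(a)|$, being a continuous map from the compact space $K$ into the discrete space $\mathbb{N}$, takes only finitely many values and is locally constant; in particular, for each $m$, the set $\{a \in K : |\sigma(a)| \ge m\}$ is clopen. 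This local constancy of lengths is what will ultimately make the construction rigid.

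Next I would make explicit which input letter produces the $j$-th output letter by introducing cumulative lengths. For $\mathbf{x} = (x_i)$, set $L_0(\mathbf{x}) = 0$, $L_i(\mathbf{x}) = \sum_{k=0}^{i-1} |\sigma(x_k)|$ for $i>0$, and $L_i(\mathbf{x}) = -\sum_{k=i}^{-1} |\sigma(x_k)|$ for $i<0$, so that the block $\sigma(x_i)$ occupies exactly the positions $L_i(\mathbf{x}),\ldots,L_{i+1}(\mathbf{x})-1$ of $\sigma(\mathbf{x})$. Because each $|\sigma(x_k)| \ge 1$, the sequence $(L_i)$ is strictly increasing with $|L_i| \ge |i|$, so there is a unique index $i = i(\mathbf{x})$ with $L_i(\mathbf{x}) \le j < L_{i+1}(\mathbf{x})$, and a short estimate gives $|i| \le |j|$. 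Writing $m = j - L_i(\mathbf{x}) + 1 \in \{1,\ldots,|\sigma(x_i)|\}$ for the offset of position $j$ inside that block, one has
$$\sigma(\mathbf{x})_j = \pi_m\bigl(\sigma(x_i)\bigr),$$
which expresses the $j$-th output coordinate through finitely much data: the index $i$, the offset $m$, and the single input letter $x_i$.

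Then I would establish the local stability of $i$ and $m$, which is the heart of the matter. Fix $\mathbf{x}$ and let $i = i(\mathbf{x})$, $m$ be as above. Only the finitely many coordinates $x_k$ with $k$ between $0$ and $i$ (all with $|k| \le |j|$) enter the values $L_0(\mathbf{x}),\ldots,L_{i+1}(\mathbf{x})$. By the local constancy of $a\mapsto|\sigma(a)|$, there is a neighborhood of $\mathbf{x}$ on which $|\sigma(y_k)| = |\sigma(x_k)|$ for all those $k$; on this neighborhood the cumulative lengths are frozen, $L_k(\mathbf{y}) = L_k(\mathbf{x})$ for the relevant $k$, hence $i(\mathbf{y}) = i$ and the offset is still $m$. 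Thus $\sigma(\mathbf{y})_j = \pi_m(\sigma(y_i))$ on this neighborhood. Since $x_i$ lies in the clopen set $\{a : |\sigma(a)| \ge m\}$ on which $\pi_m \circ \sigma$ is continuous by hypothesis, and $\mathbf{y} \mapsto y_i$ is continuous, the composite $\mathbf{y} \mapsto \pi_m(\sigma(y_i))$ is continuous at $\mathbf{x}$. This proves continuity of $\mathbf{x} \mapsto \sigma(\mathbf{x})_j$ and hence of $\sigma$.

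The step I expect to be the genuine obstacle is precisely the local stability of the index $i(\mathbf{x})$: a priori this integer-valued quantity, which records \emph{which} input block lands on position $j$, could jump under small perturbations of $\mathbf{x}$, and such a jump would destroy continuity. The resolution is exactly the local constancy of the length function on the compact zero-dimensional alphabet $K$, which holds the cumulative lengths $L_k$ (and therefore the entire block decomposition near position $j$) constant on a full neighborhood of $\mathbf{x}$. Once that rigidity is in hand, the offset $m$ and the continuity of $\pi_m\circ\sigma$ are routine, and the proof closes.
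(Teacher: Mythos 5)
Your proof is correct, and it rests on the same key mechanism as the paper's: compactness of $K$ together with continuity of $a \mapsto |\sigma(a)|$ makes the length function locally constant (the paper phrases this as uniform continuity into a discrete target), and equality of lengths freezes the block decomposition of the image sequence. The packaging, however, is genuinely different. The paper runs a window-based $\epsilon$--$\delta$ estimate: it picks $N$ so that closeness of sequences in $K^{\mathbb{Z}}$ is governed by the coordinates $|i|<N$, picks $\delta_1$ so that $\delta_1$-close letters have equal $\sigma$-length and $\epsilon_1$-close $\sigma$-images, and then concludes in a single step that the image sequences agree to within $\epsilon_1$ on the window. You instead reduce, via the universal property of the product topology, to continuity of each coordinate map $\mathbf{x} \mapsto \sigma(\mathbf{x})_j$, and you track explicitly, through the cumulative lengths $L_i$ and the bound $|i(\mathbf{x})| \le |j|$, which input block covers position $j$ and at which offset. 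Your route is slightly longer but buys rigor at exactly the point the paper compresses: the paper's final implication (from $d_K(x_i,x_i')<\delta_1$ for $|i|<N$ to closeness of the image coordinates on that window) tacitly uses both that equal lengths align the two block decompositions and that the output letter at a position of modulus less than $N$ is produced by an input letter at a position of modulus at most that size; your cumulative-length bookkeeping is precisely the justification of those tacit steps, isolated and proved rather than left implicit.
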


\begin{proof}
Fix ${\bf x} =(x_{i}) \in K^{\mathbb{Z}}$ and let $\epsilon>0$ be given. 

Since the topology on 
$K^{\mathbb{Z}}$ is given by the product topology, there is an $\epsilon_1 > 0$ and an $N>0$ such that
given ${\bf y} = (y_i), {\bf y'} = (y_i') \in K^{\mathbb{Z}}$, $d(y_i,y_i')< \epsilon_1$ for $|i| < N$ implies $d({\bf y},{\bf y'}) < \epsilon$, where $d$ is a distance defining the topology of $K^{\mathbb{Z}}$. 

Since ${z} \mapsto |\sigma({z})|$ is a continuous function from $K$ to $\mathbb{N}$, it is uniformly continuous. 
Similarly, each $\pi_j \circ  \sigma$ is uniformly continuous. As such there is a $\delta_1 >0$ such that
for any $z,z' \in K $ with $d_K(z,z')< \delta_1$ we have $|\sigma(z)| = |\sigma(z')| \text{ and } d_{K^+} (\pi_j \circ \sigma(z),\pi_j \circ \sigma(z'))<\epsilon_1 \text{ for all } j$ with $1\leq j \leq |\sigma(z)|$.

Again by the nature of the product topology, there is a $\delta>0$ so that 
$d((x_{i}),(x'_{i}))<\delta$ implies $d_K(x_i,x_i')<\delta_1$ for $|i|<N$.
 Therefore, as $d((x_{i}),(x'_{i}))<\delta$ we deduce $d_K(x_i,x_i')<\delta_1$ for all $i$ with $|i|<N $, and, thus,
$d_K(\sigmahat(x)_i,\sigmahat(x')_i)<\epsilon_1$ for all $i$ with $|i|<N$.
Therefore, we finally obtain $d(\sigmahat((x_{i})),\sigmahat((x'_{i}))) < \epsilon $.
\end{proof}

Given a generalized substitution $\sigma$ on an alphabet space $K$, we shall consider $\mathcal{L}(\sigma)$ 
the \emph{language generated by $\sigma$}, again a trickier notion to define than in the classical case. 
Fix a letter $a$ in the alphabet space $K$. By the \emph{language generated by $a$}, 
denoted $\mathcal{L}(\sigma , a)$, we mean the set of words $w \in K^+$ such that $w$ is a subword of
$\sigma^j(a)$ for some $j \in \mathbb{N}$, or $w$ is the limit in $K^n$ of such words. 
We set $\mathcal{L}(\sigma) = \cup_a \mathcal{L}(\sigma , a)$.

Returning to our example $\xi$, one can check that the word $\infty 0$ is not a subword
of $\xi^k(j)$ for any $j \in \overline{\mathbb{N}_0}$. However, it is a limit of such 
words since $(n-1)0$ occurs as a subword of $\xi^n(0)$ for every $n \geq 2$. 
Thus, $\infty 0$ is in $\mathcal{L}(\xi , 0)$.

As in the classical case we wish to assume that our generalized substitution is \emph{primitive} in the following
sense. 

\begin{definition}
Let $\sigma : K \to K^+$ be a generalized substitution on an alphabet space $K$. We say $\sigma$ is \emph{primitive} if
given any non-empty open set $V \subset K$, there is an $j \in \mathbb{N}$ such that 
for any letter $a \in K$ and any $k \geq j$, one of the letters of $\sigma^k(a)$ is in the set $V$.
\end{definition}

With the assumption of primitivity and that $\# K>1$, we see that $|\sigma^n(a)| \to \infty$ for any $a \in K$. 

One can check that our example $\xi$ is primitive (recall that in this context open sets are complementary of finite sets in $\overline{\mathbb{N}_0}$). 

\begin{proposition}
Let $K$ be an alphabet space and let $\sigma : K \to K^+$ be a primitive generalized substitution.
Given any $a \in K$, $\lim_{n \to \infty} |\sigma^n(a)|=\infty$. 
\end{proposition}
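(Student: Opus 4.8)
The plan is to show that the lengths $|\sigma^n(a)|$ are unbounded and then, by the subadditive structure of iterating a substitution, upgrade unboundedness to convergence to infinity. First I would extract a little quantitative information from primitivity. Fix any nonempty open $V \subset K$ whose complement is also nonempty (this exists since $\#K > 1$ and $K$ is Hausdorff with no isolated-point requirement needed here), and choose a letter $b \in V$. By the definition of primitivity there is a $j$ such that for every $a \in K$ and every $k \geq j$, at least one letter of $\sigma^k(a)$ lies in $V$. In particular, applying $\sigma^j$ to any letter produces a word that, upon one further application of $\sigma^{?}$, must eventually contain a letter of $V$; the point I want to isolate is that there is a fixed integer $m$ and a fixed word $w$ with $|w| \geq 2$ appearing in $\sigma^m(a)$ for every $a$. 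The existence of a $\sigma$-word of length $\geq 2$ is exactly what forces growth.

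The key step is to produce a uniform lower bound showing lengths grow by a definite factor, or at least by an additive increment, under each block of $m$ iterations. Here I would argue as follows. Since $\#K > 1$, pick two points and separate them by a clopen set $V$ (available because $K$ is zero-dimensional). Primitivity gives $j$ so that every $\sigma^j(a)$ meets $V$. Consider the continuous function $a \mapsto |\sigma^j(a)|$ on the compact space $K$. If this function were identically $1$, then $\sigma^j$ would be a generalized substitution sending each letter to a single letter, i.e. a continuous self-map $g$ of $K$ with $\sigma^j = g$ on letters; but then $\sigma^{kj}(a) = g^k(a)$ is always a single letter, so iterates never grow and, crucially, the forward orbit of each letter under $g$ could avoid $V$ for a fixed starting letter, contradicting primitivity once we observe that a single-letter image cannot simultaneously realize every open set across all large $k$. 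More directly: primitivity with $\#K>1$ forces some letter $a_0$ to have $|\sigma^{j}(a_0)| \geq 2$, because otherwise $\sigma^j$ collapses all of $K$ into single letters and no word of length $\geq 2$ is ever generated, making it impossible for $\sigma^k(a)$ to contain letters of two disjoint open sets as $k$ grows.

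From a single letter with $|\sigma^{N}(a_0)| \geq 2$ for some $N$, I would propagate growth everywhere using primitivity again: there is $j$ such that for every $a$ and every $k \geq j$, the word $\sigma^k(a)$ contains a letter lying in any prescribed open neighborhood of $a_0$; choosing that neighborhood small enough and using continuity of $z \mapsto |\sigma^N(z)|$, every letter appearing in $\sigma^k(a)$ that is near $a_0$ satisfies $|\sigma^N(\cdot)| \geq 2$ as well. Hence for all large $k$, the word $\sigma^{k+N}(a)$ is obtained by substituting into $\sigma^k(a)$, and at least one of its letters contributes length $\geq 2$, giving the strict inequality $|\sigma^{k+N}(a)| \geq |\sigma^k(a)| + 1$. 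Iterating this bound over blocks of size $N$ yields $|\sigma^{k+rN}(a)| \geq |\sigma^k(a)| + r \to \infty$ as $r \to \infty$, and since the lengths are nondecreasing along the subsequence (each letter has image of length $\geq 1$), the full sequence $|\sigma^n(a)|$ tends to infinity.

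The main obstacle I anticipate is the compactness and continuity bookkeeping needed to pass from ``some letter has a long image'' to ``a definite positive fraction of letters appearing in every large iterate have long images.'' In the finite-alphabet case this is immediate, but on an infinite alphabet space one must use that $z \mapsto |\sigma(z)|$ and $\pi_j \circ \sigma$ are continuous, so the set $\{z : |\sigma^N(z)| \geq 2\}$ is open, and then invoke primitivity to guarantee that this open set is genuinely hit by the letters of $\sigma^k(a)$ for all large $k$ and all $a$. Making the additive-increment argument uniform in the starting letter $a$ — rather than merely for a single $a_0$ — is the crux, and primitivity is precisely the hypothesis that delivers this uniformity.
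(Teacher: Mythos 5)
Your proof is correct, but it takes a genuinely longer route than the paper's, and the difference is instructive. Both arguments hinge on the same device: two distinct letters of $K$ have disjoint open neighborhoods $V_1,V_2$, and primitivity forces large iterates of letters to meet both, hence to have length at least $2$. The paper exploits the fact that its definition of primitivity is already uniform in the letter (the integer $j$ works for \emph{every} $a\in K$ and every $k\ge j$): reading the disjoint-neighborhoods argument directly, one gets $|\sigma^j(x)|\ge 2$ for all $x\in K$ simultaneously, whence $|\sigma^{jn}(x)|\ge 2^n$ and the proof is finished (with the trivial monotonicity of lengths). You instead run the disjoint-neighborhoods argument only by contradiction, extracting merely the existence of one letter $a_0$ and one $N$ with $|\sigma^N(a_0)|\ge 2$, and then spend your third paragraph recovering uniformity: you re-apply primitivity to the clopen set $\{z\in K: |\sigma^N(z)|\ge 2\}$ (clopen since $\sigma^N$ is again a generalized substitution, so its length function is continuous and integer-valued), obtaining the additive bound $|\sigma^{k+N}(a)|\ge |\sigma^k(a)|+1$ for all $a$ and all large $k$, hence linear growth, which together with monotonicity suffices. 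This propagation step is rigorous, but it is redundant: the very ingredients of your contradiction argument, used affirmatively, give $|\sigma^k(a)|\ge 2$ for every $a$ and every $k\ge\max(j_1,j_2)$, which would let you delete the propagation entirely and upgrade your linear lower bound to the paper's geometric one. A small further remark: your first paragraph (one open set $V$, and a putative fixed word $w$ of length $\ge 2$ occurring in some $\sigma^m(a)$ for every $a$) does no work and proves nothing by itself --- a single open set cannot force length $\ge 2$; it is the two disjoint neighborhoods in your second paragraph that carry the argument.
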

\begin{proof}
By our axioms, $K$ contains at least two nonequal letters $a,b$. 
Let $U, V$ be two non-intersecting open sets containing $a$ and $b$, respectively. 
By primitivity, for any $x \in K$, there is a $k$ such that 
$\sigma^k(x)$ contains a point in $U$ and a point in $V$, so in particular $|\sigma^k(x)|\geq 2$. 
It follows from the continuity of the functions $\pi_j\circ \sigma$ and $x \mapsto |\sigma(x)|$, and, the compactness of $K$ that 
there is a $j$ such that $|\sigma^j(x)|\geq 2$ for all $x \in K$. 

Inductively, we see that $|\sigma^{jn}(x) | \geq 2^n$ for all $x \in K$, completing the proof. 
\end{proof}

The assumption of primitivity also simplifies the definition of the language of a generalized substitution
to be the language generated by $a$ for any $a \in K$: $\mathcal{L} (\sigma ) = \mathcal{L} (\sigma , a)$. 
These are all the same as the following
proposition shows. 

\begin{proposition}
Suppose $\sigma:K \to K^+$ is a primitive generalized substitution on an alphabet space $K$. 
Then, for any two letters $a, b \in K$, $\mathcal{L}(\sigma , a) = \mathcal{L}(\sigma , b)=\mathcal{L} (\sigma )$. 
\end{proposition}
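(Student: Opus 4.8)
The plan is to establish the single containment $\mathcal{L}(\sigma , a) \subseteq \mathcal{L}(\sigma , b)$ for arbitrary letters $a,b \in K$. By symmetry this gives $\mathcal{L}(\sigma , a) = \mathcal{L}(\sigma , b)$, and then the definition $\mathcal{L}(\sigma) = \cup_c \mathcal{L}(\sigma , c)$ forces the common value to equal $\mathcal{L}(\sigma)$. So fix $a,b$ and a word $w \in \mathcal{L}(\sigma , a)$ with $|w| = n$.

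First I would record a closedness observation. For each fixed $n$, the set $\mathcal{L}(\sigma , b) \cap K^n$ is, by its very definition, the closure in $K^n$ of the collection of length-$n$ subwords of the words $\{ \sigma^m(b) : m \geq 0 \}$; in particular it is closed in $K^n$. The same description applies to $\mathcal{L}(\sigma , a) \cap K^n$. Consequently it suffices to prove that every genuine (non-limit) length-$n$ subword $w$ of some $\sigma^j(a)$ lies in $\mathcal{L}(\sigma , b)$: the closed set $\mathcal{L}(\sigma , b) \cap K^n$ would then contain all such subwords, hence their closure, which is exactly $\mathcal{L}(\sigma , a) \cap K^n$.

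So suppose $w = \sigma^j(a)[p , p+n-1]$ and fix $\epsilon > 0$. Since $\sigma^j$ is again a generalized substitution, the length map $a' \mapsto |\sigma^j(a')|$ is continuous (hence locally constant) and each $\pi_i \circ \sigma^j$ is continuous; thus there is an open neighborhood $V$ of $a$ such that for every $a' \in V$ we have $|\sigma^j(a')| = |\sigma^j(a)|$ and $d_{K^+}(\sigma^j(a') , \sigma^j(a)) < \epsilon$. In particular the subword $\sigma^j(a')[p , p+n-1]$ is within $\epsilon$ of $w$ in $K^n$. Now apply primitivity to the open set $V$: there is an $N$ so that $\sigma^N(b)$ contains a letter $a' \in V$, say at some position $q$. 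Writing $\sigma^{N+j}(b) = \sigma^j(\sigma^N(b))$, the block $\sigma^j(a')$ occurs as a subword of $\sigma^{N+j}(b)$, so $\sigma^{N+j}(b)$ possesses a length-$n$ subword within $\epsilon$ of $w$. Letting $\epsilon \to 0$ exhibits $w$ as a limit in $K^n$ of length-$n$ subwords of powers of $\sigma$ applied to $b$, that is, $w \in \mathcal{L}(\sigma , b)$.

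The main obstacle is precisely the step where primitivity is invoked. On a finite alphabet one would find the letter $a$ \emph{literally} inside $\sigma^N(b)$ and conclude that $\sigma^j(a)$, and hence $w$, appears verbatim in $\sigma^{N+j}(b)$. On an infinite alphabet this fails: primitivity only furnishes letters arbitrarily close to $a$, never $a$ itself, so the verbatim occurrence must be replaced by an approximation argument. The two ingredients that rescue the proof are the continuity of $\sigma^j$, which upgrades closeness of a single letter to $a$ into uniform closeness of the entire block $\sigma^j(a')$ to $\sigma^j(a)$, and the limit clause built into the definition of $\mathcal{L}(\sigma , \cdot)$ (equivalently, the closedness of $\mathcal{L}(\sigma , b) \cap K^n$), which absorbs the resulting limiting word.
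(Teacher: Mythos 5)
Your proof is correct and follows essentially the same route as the paper's: both hinge on the continuity of the iterated substitution $\sigma^j$ to upgrade proximity of a single letter (furnished by primitivity applied to a small neighborhood of $a$) into proximity of the whole block $\sigma^j(a')$, and then invoke the limit clause in the definition of $\mathcal{L}(\sigma,\cdot)$. The only difference is organizational—you first reduce to genuine subwords via the closedness of $\mathcal{L}(\sigma,b)\cap K^n$, whereas the paper handles limit words directly with an $\epsilon/2$ triangle-inequality argument—but this is the same underlying idea.
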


\begin{proof}
Let $w \in \mathcal{L}(\sigma , b)$ and suppose $\epsilon>0$ is given. Then, there are integers $k,i,j \geq 0$ such that
the distance between $\sigma^k(b)[i,j]$ and $w$ is less than $\epsilon/2$ in the $K^{|w|}$-metric.
By the continuity condition on $\sigma^k$, there is a $\delta>0$ such that 
$d_{K}(b,b')<\delta$ implies the distance from $\sigma^k(b)[i,j]$ and $\sigma^k(b')[i,j]$ is less than $\epsilon/2$.

Let $U$ be the ball of radius $\delta$ around $b$ in $K$. By the primitivity of $\sigma$, there is an integer 
$n$ and a $b' \in U$ such that $b'$ occurs in $\sigma^n(a)$. 
It follows that there are integers $i', j' \geq 0$ such that 
the distance from $\sigma^{k+n}(a)[i',j']$ to $w$ is less than $\epsilon$. 
Thus, $w $ belongs to $ \mathcal{L}(\sigma , a)$ and
$\mathcal{L}(\sigma , b)$ is included in  $\mathcal{L}(\sigma , a)$. 

Similarly, $\mathcal{L}(\sigma , a) \subset \mathcal{L}(\sigma , b)$. 
\end{proof}

For a primitive generalized substitution $\sigma : K \to K^+$,
define 
$X_{\sigma} \subset K^{\mathbb{Z}}$ to be set of sequences ${\bf x} \in K^{\mathbb{Z}}$ such that
${\bf x}[-n,n] \in \mathcal{L}(\sigma)$ for all $n \geq 0$.
It follows that $X_{\sigma}$ is a generalized subshift, {\em i.e.}, a closed, $S$-invariant subset of $K^{\mathbb{Z}}$.

When $\sigma$ is a primitive substitution, we need to explicitly rule out the possibility that 
$X_{\sigma}$ contains periodic points, an assumption that again is 
necessary even in the classical case. 
In such a situation we will say that $\sigma$ is \emph{aperiodic}.

Our example $\xi $ being primitive, its language $\mathcal{L} (\xi )$ contains infinitely many letters and thus is aperiodic.

Because we wish to focus on self-induced minimal Cantor systems 
we shall require additional properties for our generalized
substitutions, that they be primitive and aperiodic. Below, we establish an 
equivalent description of $X_{\sigma}$ under these assumptions. This equivalent description is 
more or less the analogous technique to finding a fixed point 
for $\sigmahat$ in the classical substitution case and taking its orbit closure. 

For ${\bf x} \in K^{\mathbb{Z}} $ we denote by $\omega_\sigma ({\bf x})$ the {\em omega-limit set} of ${\bf x}$ under the map 
$\sigmahat$: $\omega_\sigma ({\bf x}) = \bigcap_{n \in \mathbb{N}} \overline{ \left\{ \sigmahat^m({\bf x}) : m \geq n \right\}}$.

\begin{proposition}
\label{prop:minimality}
Suppose $K$ is an alphabet space and $\sigma:K \to K^+$ is a primitive
generalized substitution, and let $a \in K$. Suppose $bc$ appears in $\sigma^j(a)$ for some $j >0$
and let ${\bf x}$ be any point in $K^{\mathbb{Z}}$ with $x_{-1}=b\in K$ and $x_0=c\in K$.
Then, for ${\bf z} \in \omega_\sigma ({\bf x})$, $X_{\sigma}$ is equal to the closure of the $S$-orbit of ${\bf z}$. 
\end{proposition}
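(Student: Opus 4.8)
The plan is to reduce the statement to two facts: first, that the generalized subshift $(X_\sigma,S)$ is minimal, and second, that every point of $\omega_\sigma(\mathbf{x})$ already lies in $X_\sigma$. Granting these, the conclusion is immediate: $\omega_\sigma(\mathbf{x})$ is nonempty because it is a nested intersection of nonempty compact sets, any $\mathbf{z}\in\omega_\sigma(\mathbf{x})$ then belongs to the minimal system $X_\sigma$, and in a minimal system the orbit closure of any point is the whole space, so $\overline{\{S^n\mathbf{z}:n\in\mathbb{Z}\}}=X_\sigma$. The role of the hypothesis on $bc$ and of the choice $x_{-1}=b$, $x_0=c$ is precisely to guarantee the second fact; it is the analogue, in the infinite-alphabet setting, of choosing an admissible two-sided seed before taking an orbit closure.

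For the inclusion $\omega_\sigma(\mathbf{x})\subseteq X_\sigma$, I would first observe that, since $\sigmahat$ places $\sigma(x_0)$ at the origin, the iterate $\sigmahat^m(\mathbf{x})$ reads $\ldots\sigma^m(b).\sigma^m(c)\ldots$ around the origin, with $\sigma^m(b)$ ending at coordinate $-1$ and $\sigma^m(c)$ starting at coordinate $0$. Because $bc$ is a subword of $\sigma^j(a)$, applying $\sigma^m$ and using that $\sigma$ respects concatenation shows $\sigma^m(b)\sigma^m(c)$ is a subword of $\sigma^{m+j}(a)$, hence an element of $\mathcal{L}(\sigma)$. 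The preceding proposition forces $|\sigma^m(b)|\to\infty$ and $|\sigma^m(c)|\to\infty$, so for each fixed $n$ and all large $m$ the central window $\sigmahat^m(\mathbf{x})[-n,n]$ is a subword of $\sigma^m(b)\sigma^m(c)$ and thus lies in $\mathcal{L}(\sigma)$. Writing $\mathbf{z}$ as a limit $\sigmahat^{m_k}(\mathbf{x})\to\mathbf{z}$ along some $m_k\to\infty$, we get $\mathbf{z}[-n,n]=\lim_k\sigmahat^{m_k}(\mathbf{x})[-n,n]$; since $\mathcal{L}(\sigma)\cap K^{2n+1}$ is closed (the language is defined so as to contain all limits of subwords), the limit $\mathbf{z}[-n,n]$ again lies in $\mathcal{L}(\sigma)$. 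As $n$ is arbitrary, $\mathbf{z}\in X_\sigma$.

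The substantial part is the minimality of $X_\sigma$, which I would deduce from a uniform recurrence statement: for every genuine subword $w$ of some $\sigma^i(b)$ and every $\epsilon>0$ there is a length $L$ such that every word of $\mathcal{L}(\sigma)$ of length at least $L$ contains a subword within $\epsilon$ of $w$ in the metric $d_{K^+}$. To prove this, uniform continuity of $x\mapsto|\sigma^i(x)|$ and of the maps $\pi_\ell\circ\sigma^i$ yields an open ball $V$ around $b$ so small that for every $b'\in V$ the word $\sigma^i(b')$ contains a subword $\epsilon$-close to $w$; primitivity then provides a single exponent $j$ with the property that $\sigma^{k}(d)$ meets $V$ for every letter $d$ and every $k\geq j$, so that every $\sigma^{i+j}(d)$ contains an $\epsilon$-copy of $w$. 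Compactness of $K$ bounds the block lengths, $L_0:=\max_{d\in K}|\sigma^{i+j}(d)|<\infty$, and any long word of $\mathcal{L}(\sigma)$, being a subword of some $\sigma^m(c)=\sigma^{i+j}(d_1)\cdots\sigma^{i+j}(d_t)$, must entirely contain one of these blocks as soon as its length exceeds $2L_0$; that block carries the required $\epsilon$-copy of $w$, so $L:=2L_0$ works. Feeding this recurrence into the standard argument gives minimality: given $\mathbf{y},\mathbf{y}'\in X_\sigma$, a window $[-n,n]$ and $\epsilon>0$, the word $\mathbf{y}'[-n,n]\in\mathcal{L}(\sigma)$ occurs, up to $\epsilon$, inside every long enough window of $\mathbf{y}$, which produces a shift $S^p\mathbf{y}$ agreeing with $\mathbf{y}'$ on $[-n,n]$ up to $\epsilon$; hence the orbit of $\mathbf{y}$ is dense and $X_\sigma$ is minimal.

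I expect the main obstacle to be purely the infinite-alphabet topology: since two letters can be arbitrarily close without being equal, one cannot work with exact occurrences of words and must everywhere replace ``$u$ contains $w$'' by ``$u$ contains a word $\epsilon$-close to $w$''. Two points need care. First, the limit words allowed in $\mathcal{L}(\sigma)$: when $w$ or the ambient word is only a limit of genuine subwords, I would approximate it by a genuine one and absorb the error into $\epsilon$, and when passing occurrences to a limit of words of a fixed length $L$ I would use that the finitely many possible starting positions let me extract a subsequence with a constant occurrence position, the closedness of an $\epsilon$-ball then preserving the approximation. Second, the combinatorial lemma that a window longer than twice the maximal block length must contain a full $\sigma^{i+j}$-block; this is elementary but is exactly where compactness of $K$, through the boundedness of the block lengths, is indispensable, and it is the place where the classical primitive-substitution proof must be adapted rather than quoted.
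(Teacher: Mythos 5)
Your proof is correct, but it takes a genuinely different route from the paper's. The membership step (each $\mathbf{z}\in\omega_\sigma(\mathbf{x})$ lies in $X_\sigma$, via $\sigma^m(b)\sigma^m(c)$ being a subword of $\sigma^{m+j}(a)$ and the language being closed under limits) coincides with the paper's first paragraph. For the reverse inclusion, however, the paper never passes through minimality: it fixes $\mathbf{y}\in X_\sigma$, approximates $\mathbf{y}[-n,n]$ by a window of $\sigma^{k}(c)$, uses continuity of $\sigma^k$ at $c$ together with primitivity to plant an $\epsilon$-copy of that window inside $\sigma^{l+k}(c)$ for all large $l$, and then exploits the fact that the words $\sigma^{l+k}(c)$ are anchored at coordinate $0$ of $\mathbf{z}$ (because $\mathbf{z}$ is an $\omega$-limit of $\sigma$-iterates of $\mathbf{x}$ and $x_0=c$); suitable shifts of $\mathbf{z}$ therefore converge to $\mathbf{y}$. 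You instead prove outright that $(X_\sigma,S)$ is minimal, via a uniform-recurrence lemma (every language word $\epsilon$-occurs in every language word of length at least $2L_0$, obtained from the block decomposition $\sigma^m(c)=\sigma^{i+j}(d_1)\cdots\sigma^{i+j}(d_t)$), and then quote minimality. Note the paper's logical order is the opposite of yours: minimality of $X_\sigma$ is Theorem \ref{gen1}(1) there, and it is \emph{deduced from} this proposition together with a syndetic-return-time argument for $\mathbf{z}$; since your recurrence lemma never refers to $\mathbf{z}$ or to this proposition, there is no circularity, and your argument in fact gives an independent proof of Theorem \ref{gen1}(1). The trade-off: your route proves more (minimality plus a uniform recurrence property of $\mathcal{L}(\sigma)$) at the cost of extra combinatorics — the $2L_0$ block-covering step and the two places where limit words must be replaced by genuine subwords, both of which you correctly identify and handle — whereas the paper's direct two-inclusion computation avoids all of this precisely by pinning every occurrence at the origin of $\mathbf{z}$, which is what allows it to come before the minimality theorem. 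One small point you leave implicit: writing a genuine language word as a subword of $\sigma^{i+j}(d_1)\cdots\sigma^{i+j}(d_t)$ requires the ambient exponent $m$ to satisfy $m\ge i+j$, but since $|\sigma^m(\cdot)|$ is non-decreasing in $m$ (letters have nonempty images), words arising from $m<i+j$ have length at most $L_0$, so your threshold $2L_0$ already excludes them.
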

\begin{proof}
Suppose ${\bf z} \in \omega_\sigma ({\bf x})$. Then, ${\bf z} = \lim_{i \to \infty} \sigmahat^{k_i} ({\bf x})$. 
Fix $m \in \mathbb{N}$ and let $\epsilon >0$ be given. Then, there is an $i \geq 1$ such that 
the word ${\bf  z}[-m,m]$ is within $\epsilon$ of a subword of $\sigma^{k_i}(bc)$, which implies ${\bf z}[-m,m]$ is 
within $\epsilon$ of a subword of $\sigma^{k_i+j}(a)$. Therefore, ${\bf z}[-m,m] \in \mathcal{L}(\sigma , a) = \mathcal{L}(\sigma)$. 
This shows that ${\bf z}$ belongs to $X_{\sigma}$.
But because $X_{\sigma}$ is closed and shift-invariant, 
we see that $\overline{\{S^n ({\bf  z}) : n\in \mathbb{Z} \}}$ is included in  $X_{\sigma}$.

Conversely, fix an ${\bf y}\in X_{\sigma}$ and an  $\epsilon'>0$.  Let us fix an integer  $n \ge 0$, such that   $d({\bf y},{\bf y'})< \epsilon'$ when  $d_{K^{+}}( {\bf y}[-n,n], {\bf y'}[-n,n])$ is less than $\epsilon'/2$. 
Since ${\bf y}[-n,n] \in \mathcal{L}(\sigma)= \mathcal{L}( \sigma, c)$, there are integers $k,i$ such that 
$d_{K^{+}}(\sigma^k(c)[i-n,i+n],{\bf y}[-n,n]) $ is less than $\epsilon'/4$. 

Note that there is a $\delta>0$ such that if $d_{K}(c,c')<\delta$ 
then, the words $\sigma^k(c)$ and $\sigma^k(c')$ are within $\epsilon'/4$. 
Set $U$ equal to the $d_K$-ball of radius $\delta$ around $c$. 
By the primitivity of $\sigma$, we can find a point in $U$ in the word $\sigma^l(c)$ for all $l$ large enough. 
Thus, there is an integer $i(l)$ such that $d_{K^{+}}(\sigma^{l+k}(c)[i(l)-n,i(l)+n], {\bf y}[-n,n]) $ is less than $\epsilon'/2$. 
Since $\epsilon'$ and $l$ are arbitrary,  it follows that ${\bf y} \in \overline{ \{ S^n ({\bf z}) : n \in \mathbb{Z} \} }$.
\end{proof}

Once again returning to our example $\xi$, letting ${\bf x}$ be any sequence in $\overline{\mathbb{N}_0}$ with $x_{-1}=0$ and $x_0=1$,
we see that the omega-limit set of ${\bf x}$ is simply the point  
$${\bf z}= \ldots  0 1 0 2 0 1 0 \infty . 0 1 0 2 0 1 0 3 0 1 \ldots $$
Note ${\bf z}$ is a fixed point for $\widehat{\xi}$. 
The set $X_{\xi}$ is the $S$-orbit closure of this point. 

\begin{proposition}\label{prop:concatenation}
Suppose $K$ is an alphabet space and $\sigma:K \to K^+$ is a primitive
generalized substitution.
For ${\bf z} \in X_{\sigma}$ and any $k \in \mathbb{N}$, the sequence ${\bf z}$ is a concatenation 
of $\sigma^k$-words and ${\bf z} = S^i \sigma^k ({\bf y})$ for some $i\geq 0$ and ${\bf y}\in X_\sigma$. 
\end{proposition}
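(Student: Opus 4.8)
The plan is to first reduce to the case $k=1$ and then identify $X_\sigma$ with the set of shifted substitution images. For the reduction, I would observe that $\sigma^k$ is again a primitive generalized substitution and that its extension to $K^{\mathbb{Z}}$ is $\sigmahat^k$. The key point is that $X_{\sigma^k}=X_\sigma$: indeed $\mathcal{L}(\sigma^k)\subseteq\mathcal{L}(\sigma)$ trivially, while for the reverse inclusion any subword of $\sigma^j(a)$ occurs in $\sigma^{kj'}(b)$ for suitable $b$ and large $j'$ by primitivity (the letter $a$ appears in $\sigma^{kj'-j}(b)$), and $\mathcal{L}(\sigma)$ is closed under limits; hence $\mathcal{L}(\sigma)=\mathcal{L}(\sigma^k)$ and the two subshifts coincide. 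Granting the statement for $k=1$ applied to $\sigma^k$, every ${\bf z}\in X_\sigma=X_{\sigma^k}$ is then $S^i\sigmahat^k({\bf y})$ with ${\bf y}\in X_\sigma$ and $0\le i<|\sigma^k(y_0)|$, which exhibits ${\bf z}$ as a concatenation of $\sigma^k$-words.

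For the case $k=1$ I would introduce the set
$$ W=\{\, S^i\sigmahat({\bf y}) : {\bf y}\in X_\sigma,\ 0\le i<|\sigma(y_0)|\,\} $$
and aim to prove $W=X_\sigma$; the proposition is exactly the inclusion $X_\sigma\subseteq W$. The inclusion $W\subseteq X_\sigma$ is straightforward: one checks $\sigmahat(\mathcal{L}(\sigma))\subseteq\mathcal{L}(\sigma)$ (a subword of $\sigma^j(a)$ maps under $\sigma$ to a subword of $\sigma^{j+1}(a)$, and limits are handled by continuity of $\sigmahat$ together with the fact that $\mathcal{L}(\sigma)$ is closed under limits), so $\sigmahat(X_\sigma)\subseteq X_\sigma$, and $X_\sigma$ is shift-invariant.

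The technical heart — and the step I expect to be the main obstacle — is showing that $W$ is closed and $S$-invariant. For closedness, given ${\bf z}^{(m)}=S^{i_m}\sigmahat({\bf y}^{(m)})\to{\bf z}$, the offsets satisfy $0\le i_m<|\sigma(y^{(m)}_0)|\le\max_{a\in K}|\sigma(a)|$, which is finite since $a\mapsto|\sigma(a)|$ is continuous on the compact space $K$; passing to a subsequence I may assume $i_m\equiv i$ and, using compactness of $X_\sigma$, that ${\bf y}^{(m)}\to{\bf y}\in X_\sigma$. Continuity of $\sigmahat$ and of $S^i$ give ${\bf z}=S^i\sigmahat({\bf y})$, and since $|\sigma(\cdot)|$ is integer-valued and continuous, hence locally constant, $|\sigma(y^{(m)}_0)|=|\sigma(y_0)|$ for large $m$, so $0\le i<|\sigma(y_0)|$ and ${\bf z}\in W$. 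For $S$-invariance I would track the origin within its block: if $0\le i<|\sigma(y_0)|-1$ then $S(S^i\sigmahat({\bf y}))=S^{i+1}\sigmahat({\bf y})$ stays in $W$, while if $i=|\sigma(y_0)|-1$ the origin crosses into the next block and $S(S^i\sigmahat({\bf y}))=\sigmahat(S{\bf y})$ with $S{\bf y}\in X_\sigma$ and offset $0$; the analogous bookkeeping for $S^{-1}$ (crossing back at $i=0$ into the block $\sigma(y_{-1})$) shows $S^{-1}W\subseteq W$, so $W$ is fully $S$-invariant.

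Finally I would exhibit in $W$ a point with dense orbit. Choosing ${\bf x}$ with $x_{-1}x_0=bc$ a factor of some $\sigma^j(a)$ as in Proposition \ref{prop:minimality} and a point ${\bf z}_0=\lim_i\sigmahat^{k_i}({\bf x})\in\omega_\sigma({\bf x})$, I would set ${\bf w}_i=\sigmahat^{k_i-1}({\bf x})$ and pass to a subsequence with ${\bf w}_i\to{\bf w}$; since $\omega_\sigma({\bf x})\subseteq X_\sigma$ we have ${\bf w}\in X_\sigma$, and continuity of $\sigmahat$ yields ${\bf z}_0=\sigmahat({\bf w})\in W$. By Proposition \ref{prop:minimality} the orbit closure of ${\bf z}_0$ is all of $X_\sigma$, so the closed $S$-invariant set $W$ contains $X_\sigma$. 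Combined with $W\subseteq X_\sigma$ this gives $W=X_\sigma$. Writing the resulting ${\bf z}=S^i\sigmahat({\bf y})$ with $0\le i<|\sigma(y_0)|$ displays ${\bf z}$ as the bi-infinite concatenation $\cdots\sigma(y_{-1})\sigma(y_0)\sigma(y_1)\cdots$ read from position $i$ inside $\sigma(y_0)$, that is, as a concatenation of $\sigma$-words with $i\ge 0$, completing the proof.
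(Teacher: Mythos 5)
Your proof is correct, and its core is the same as the paper's: you write an omega-limit point as $\sigmahat({\bf w})$ with ${\bf w}\in X_\sigma$ by extracting a convergent subsequence of $(\sigmahat^{k_i-1}({\bf x}))$, you form a closed $S$-invariant union of shifted copies of $\sigmahat(X_\sigma)$, and you invoke Proposition \ref{prop:minimality} to conclude that this set contains $X_\sigma$. The paper does exactly this, with the set $\bigcup_{0\le i<\max_{a\in K}|\sigma^k(a)|}S^i\sigmahat^k(X_\sigma)$, and it handles all $k$ at once by noting that the same subsequence argument yields ${\bf z}=\sigmahat^k({\bf y}')$; it merely asserts closedness and $S$-invariance, which you verify carefully (local constancy of $|\sigma(\cdot)|$, block bookkeeping) --- that is the honest content behind the paper's ``observe''. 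Where you genuinely diverge is in treating general $k$: you reduce to $k=1$ applied to $\sigma^k$ via the lemma $X_{\sigma^k}=X_\sigma$, a statement about $\mathcal{L}(\sigma^k)$ that the paper never needs. The reduction is valid, but your parenthetical justification ``the letter $a$ appears in $\sigma^{kj'-j}(b)$'' is not literally available for a generalized substitution: primitivity only guarantees that letters in any neighborhood of $a$ appear, so you must argue as in the paper's proof that $\mathcal{L}(\sigma,a)=\mathcal{L}(\sigma,b)$ --- pick $a'$ close to $a$ occurring in $\sigma^{kj'-j}(b)$, use continuity of $\sigma^j$ to produce a subword of $\sigma^{kj'}(b)$ close to the given word $w$, and then pass to the limit inside $\mathcal{L}(\sigma^k,b)$. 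Alternatively, you could delete the reduction entirely and run your $W$-argument verbatim with $\sigma^k$ in place of $\sigma$, which is in effect what the paper does. Finally, note that your inclusion $W\subseteq X_\sigma$, while true, is superfluous: the proposition only requires $X_\sigma\subseteq W$.
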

\begin{proof}
First let ${\bf z} \in \omega_\sigma ({\bf x})$ for some ${\bf x} \in K^{\mathbb{Z}}$. 
Then, ${\bf z} = \lim_{k \to \infty} \sigmahat^{n_k}({\bf x})$. 
From Proposition \ref{prop:minimality} one can suppose ${\bf x}$ belongs to $X_\sigma$.
Consider 
the sequence $(\sigmahat^{n_k-1}({\bf x}) )$. 
By compactness a subsequence converges, say to a point ${\bf y}$ belonging to $X_\sigma$. By 
continuity of $\sigmahat$, it follows that $\sigmahat({\bf y})={\bf z}$. A similar argument shows that for any $k \in \mathbb{N}$, ${\bf z} = \sigma^k ({\bf y}')$, for some ${\bf y}'\in X_\sigma$. 

 By continuity and compactness, the maximal length of $\sigma^{k}$-words is bounded. 
Furthermore  observe that $\bigcup_{0 \le i  <\max_{a\in K} |\sigma^{k}(a)|} S^{i} \sigma^{k}(X_\sigma ) $ is a closed $S$-invariant set. By Proposition \ref{prop:minimality} and the first part of the proof, this set also contains  $X_{\sigma}$. This proves the proposition.\end{proof}

This leaves us with the issue of \emph{recognizability} in $X_{\sigma}$, namely the uniqueness of 
the decomposition of a sequence in $X_{\sigma}$ into $\sigma^k$-words. 
It was proven by Moss\'e that a condition equivalent to our definition of recognizability below follows from primitivity and aperiodicity 
in the case of a classical substitution \cite{Mosse:1992, Mosse:1996}. 
We do not yet know if this theorem holds in the case
of generalized substitutions, so we will assume recognizability as a separate axiom. 

\begin{definition}
Let $\sigma : K \to K^+$ be a generalized substitution on an alphabet space $K$. We say $\sigma$ is \emph{recognizable} if
for every ${\bf z} \in X_{\sigma}$, there is a unique set of integers $\{n_k : k \in \mathbb{Z} \}$ and unique
${\bf x} \in X_{\sigma}$ such that $\sigma(x_k)={\bf z}[n_k,n_{k+1}-1]$ for all $k \in \mathbb{Z}$. 
\end{definition}

The generalized substitution $\xi $ in our example is recognizable. 
For ${\bf z} \in X_{\xi}$, since $0$ appears in the image of each letter exclusively at the initial position, so either we have $(n_k)=(2k)$ or $(n_k)=(2k+1)$. 
Moreover, $\xi$ being injective on the letters,
it follows immediately that $\xi$ is recognizable.

\begin{proposition}
\label{prop:recsubgen}
Suppose $\sigma$ is recognizable and suppose ${\bf z} = \sigmahat({\bf x})$ with ${\bf x} \in X_{\sigma}$ . 
Then, the first return time to $\sigmahat(X_{\sigma})$ of ${\bf z}$ with respect to the shift map is $|\sigma(x_0)|$. 
\end{proposition}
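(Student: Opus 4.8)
The plan is to read membership in $\sigmahat(X_\sigma)$ as the presence of a $\sigma$-cut at the origin, and then to use recognizability to pin down exactly where such cuts occur along ${\bf z}$. First I would record that ${\bf z}=\sigmahat({\bf x})$ lies in $X_\sigma$ (every finite subword of ${\bf z}$ is the $\sigma$-image of a subword of ${\bf x}\in X_\sigma$, hence lies in $\mathcal{L}(\sigma)$), so the recognizability axiom applies to ${\bf z}$. Writing the canonical decomposition coming from $\sigmahat$, set $n_0=0$, $n_k=\sum_{0\le i<k}|\sigma(x_i)|$ for $k>0$ and $n_k=-\sum_{k\le i<0}|\sigma(x_i)|$ for $k<0$; then $\sigma(x_k)={\bf z}[n_k,n_{k+1}-1]$ for every $k$, so $\{n_k\}$ is a set of integers witnessing recognizability, and by the uniqueness clause it is the only one. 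In particular the smallest strictly positive element of this cut set is $n_1=|\sigma(x_0)|$.

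Next I would prove the key equivalence: for $m\in\mathbb{Z}$, one has $S^m{\bf z}\in\sigmahat(X_\sigma)$ if and only if $m$ is one of the cut points $n_k$. For the ``if'' direction, if $m=n_{k_0}$ then reindexing the blocks shows $S^m{\bf z}=\sigmahat(S^{k_0}{\bf x})$, and $S^{k_0}{\bf x}\in X_\sigma$ by shift-invariance, so $S^m{\bf z}\in\sigmahat(X_\sigma)$. For the ``only if'' direction, suppose $S^m{\bf z}=\sigmahat({\bf y})$ with ${\bf y}\in X_\sigma$; by the definition of $\sigmahat$ the block $\sigma(y_0)$ begins at the origin of $S^m{\bf z}$, giving a decomposition of $S^m{\bf z}$ into $\sigma$-words whose cut set contains $0$. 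Translating back, this produces a decomposition of ${\bf z}$ into $\sigma$-words whose cut set contains $m$; since the decomposition of $S^m{\bf z}$ guaranteed by recognizability is the translate by $-m$ of the unique decomposition of ${\bf z}$, uniqueness forces this cut set to be exactly $\{n_k\}$, whence $m\in\{n_k\}$.

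Combining the two parts, $\{m: S^m{\bf z}\in\sigmahat(X_\sigma)\}=\{n_k: k\in\mathbb{Z}\}$, so the first return time $\inf\{m>0: S^m{\bf z}\in\sigmahat(X_\sigma)\}$ equals the least positive cut, namely $|\sigma(x_0)|$. The step I expect to require the most care is the ``only if'' direction of the equivalence: one must argue that a $\sigma$-decomposition of $S^m{\bf z}$ having a cut at the origin really corresponds, after translation, to the canonical decomposition of ${\bf z}$, and this is precisely where the recognizability axiom (uniqueness of the cut set and of the preimage ${\bf x}$) is indispensable, while shift-invariance of $X_\sigma$ is what lets me transport decompositions between ${\bf z}$ and its shifts.
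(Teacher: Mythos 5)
Your proof is correct and takes essentially the same approach as the paper's: the paper likewise observes that $S^{|\sigma(x_0)|}({\bf z})=\sigmahat(S({\bf x}))\in\sigmahat(X_{\sigma})$, and rules out a return at any time $0<j<|\sigma(x_0)|$ because a factorization $S^{j}({\bf z})=\sigmahat({\bf y})$ would yield a second $\sigma$-cut set for ${\bf z}$, contradicting the uniqueness clause of recognizability. Your write-up simply makes explicit the full equivalence between returns to $\sigmahat(X_{\sigma})$ and cut points, which the paper leaves implicit.
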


\begin{proof}
Let $r = |\sigma(x_0)|$.
Then, $S^r({\bf z}) = \sigmahat(S({\bf x})) \in \sigmahat(X_{\sigma})$. 
However, if $S^j({\bf z}) = \sigmahat({\bf y})$ for $0 < j < r$ where ${\bf y} \in X_{\sigma}$
then, this violates the uniqueness conditions in recognizability.
\end{proof}


\subsection{Self-Induced Systems and Generalized Substitutions}


With all of these notions established for generalized substitutions, we are prepared to proceed
in showing that a minimal Cantor system is self-induced if and only if it is conjugate to 
a recognizable, primitive, aperiodic generalized substitution subshift.

\begin{theorem} \label{gen1}
Let $(X_{\sigma},S)$ be generated by a  primitive generalized substitution $\sigma: K \to K^+$ then:
\begin{enumerate}
\item
\label{enum:min}
$(X_{\sigma},S)$ is minimal;
\item
\label{enum:rec}
If $\sigma $ is recognizable and aperiodic then, $(X_{\sigma},S)$ is a self-induced minimal Cantor system, where $\sigmahat$ is a conjugacy from $(X_{\sigma} , S)$  to the induced system $(\sigmahat (X_{\sigma}) , S_{\sigmahat (X_{\sigma}) })$.
\end{enumerate}
\end{theorem}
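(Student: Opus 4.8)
The plan is to prove the two parts in order, with part \eqref{enum:min} (minimality) serving as the foundation and part \eqref{enum:rec} (self-inducedness) building on the machinery of recognizability developed above.

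For part \eqref{enum:min}, I would argue that $(X_\sigma, S)$ is minimal by showing every point of $X_\sigma$ has a dense orbit. The key tool is Proposition \ref{prop:minimality}, which states that for a suitable starting point ${\bf x}$ and any ${\bf z} \in \omega_\sigma({\bf x})$, the set $X_\sigma$ equals $\overline{\{S^n({\bf z}) : n \in \mathbb{Z}\}}$. So it suffices to show that for an arbitrary ${\bf y} \in X_\sigma$, every word ${\bf w} = {\bf y}[-m,m]$ of its language appears syndetically in every point of $X_\sigma$. First I would use Proposition \ref{prop:concatenation}: any ${\bf u} \in X_\sigma$ is a concatenation of $\sigma^k$-words for every $k$. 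Then, by primitivity, for a fixed open neighborhood $V$ of any letter $c$ appearing in $X_\sigma$, there is a $j$ such that $\sigma^k(a)$ contains a letter of $V$ for all $k \ge j$ and all $a \in K$; combining this with the continuity of $\pi_i \circ \sigma^k$ and the compactness of $K$, I would show that ${\bf w}$ (or a word within $\epsilon$ of it) appears in $\sigma^k(a)$ for all $a$ once $k$ is large enough. Since every ${\bf u} \in X_\sigma$ decomposes into such $\sigma^k$-words and the gaps between consecutive $\sigma^k$-words are bounded (again by continuity and compactness of $K$, so $\max_{a \in K} |\sigma^k(a)| < \infty$), the word ${\bf w}$ recurs with bounded gaps in ${\bf u}$. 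This gives that the orbit of ${\bf u}$ is dense, hence minimality.

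For part \eqref{enum:rec}, assume additionally that $\sigma$ is recognizable and aperiodic. The strategy is to verify directly that $\sigmahat$ is the desired conjugacy onto an induced system. First I would observe that $\sigmahat(X_\sigma)$ is a proper, nonempty clopen subset of $X_\sigma$: it is closed since $\sigmahat$ is continuous (by the Lemma above) and $X_\sigma$ compact, it is $S^r$-invariant in the appropriate sense, and recognizability guarantees that a point lies in $\sigmahat(X_\sigma)$ exactly when its decomposition starts at the origin — a clopen condition since it is detectable from finitely many coordinates. Properness follows from aperiodicity together with $\# K > 1$, which forces $|\sigma(a)| \ge 2$ for some words and hence $\sigmahat(X_\sigma) \neq X_\sigma$. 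Next, Proposition \ref{prop:recsubgen} gives precisely the return-time computation: for ${\bf z} = \sigmahat({\bf x})$ with ${\bf x} \in X_\sigma$, the first return time of ${\bf z}$ to $\sigmahat(X_\sigma)$ under $S$ equals $|\sigma(x_0)|$. Therefore the induced map satisfies $S_{\sigmahat(X_\sigma)}(\sigmahat({\bf x})) = S^{|\sigma(x_0)|}(\sigmahat({\bf x})) = \sigmahat(S({\bf x}))$, which is exactly the intertwining relation $S_{\sigmahat(X_\sigma)} \circ \sigmahat = \sigmahat \circ S$. Since $\sigmahat$ is continuous, and is injective on $X_\sigma$ by recognizability (distinct preimages would contradict the uniqueness of the decomposition), it is a homeomorphism onto its image $\sigmahat(X_\sigma)$, and hence a conjugacy from $(X_\sigma, S)$ to the induced system. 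Finally, $(X_\sigma, S)$ is a Cantor system because it is a minimal (hence perfect) compact metric space that is zero-dimensional, being a subspace of $K^\mathbb{Z}$ with $K$ zero-dimensional.

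The main obstacle I anticipate is the clopenness and properness of $\sigmahat(X_\sigma)$, where the topological subtleties of the infinite alphabet genuinely intervene. In the classical finite-alphabet case, recognizability makes membership in $\sigmahat(X_\sigma)$ a condition on a bounded window of coordinates, so the set is manifestly clopen; here one must argue that the recognizability axiom, combined with continuity of $\sigmahat$ and the bound on $\sigma^k$-word lengths, still yields a locally constant (hence continuous) return-time function $r_{\sigmahat(X_\sigma)}$ so that the induced system is well-defined. I would lean on Proposition \ref{prop:recsubgen} to control the return time and on the compactness of $K$ to bound word lengths uniformly; the remaining care is to confirm that the "starts a decomposition at the origin'' condition is detectable from finitely many coordinates, which is where recognizability must be invoked most carefully.
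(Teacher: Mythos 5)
Your treatment of part \eqref{enum:min} is essentially the paper's own argument: both rest on Proposition \ref{prop:minimality} to identify $X_\sigma$ with an orbit closure, and on Proposition \ref{prop:concatenation} together with primitivity, continuity and compactness (so that $\max_{b\in K}|\sigma^k(b)|<\infty$) to obtain syndetic occurrence of words of the language; that part of your plan is sound. Likewise, your injectivity claim for $\sigmahat$, the return-time computation via Proposition \ref{prop:recsubgen}, and the intertwining relation $S_{\sigmahat(X_\sigma)}\circ\sigmahat=\sigmahat\circ S$ all match the paper.

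The genuine gap is in part \eqref{enum:rec}, at exactly the point you flag as the ``main obstacle'': the clopenness of $U=\sigmahat(X_\sigma)$. You assert that, by recognizability, membership in $U$ is ``detectable from finitely many coordinates,'' but the paper's definition of recognizability is a purely global uniqueness statement (unique cutting sequence $(n_k)$ and unique preimage ${\bf x}$); it carries no locality or uniform-window content, and on an infinite alphabet no analogue of Moss\'e's theorem is available --- the paper explicitly states it does not know whether such a theorem holds and therefore takes recognizability as a separate axiom. So local detectability is precisely what you cannot invoke, and as written your plan leaves unproven that the induced system is defined on a \emph{clopen} set, which is what Definition \ref{def:SI} requires. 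The paper closes this with an idea missing from your proposal: set $X_i=\{{\bf x}\in X_\sigma : |\sigma(x_0)|=i\}$, which is clopen because $x\mapsto|\sigma(x)|$ is continuous and integer-valued, and $U_i=\sigmahat(X_i)$, which is closed by compactness and continuity of $\sigmahat$. The finite family $\{S^k(U_i) : 0\le k<i,\ 1\le i\le \max_{b\in K}|\sigma(b)|\}$ is pairwise disjoint (injectivity of $\sigmahat$ plus the return-time statement of Proposition \ref{prop:recsubgen}, both consequences of recognizability) and covers $X_\sigma$ (every point decomposes into $\sigma$-words). A finite cover by pairwise disjoint closed sets forces each member to be open as well, being the complement of a finite union of closed sets; hence each $S^k(U_i)$, and in particular $U=\bigcup_i U_i$, is clopen. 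This partition argument, not any local detectability of the decomposition, is what makes the theorem go through, and your proof is incomplete without it or a substitute for it.
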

\begin{proof}
\eqref{enum:min}
\textbf{($X_{\sigma}, S)$ is a minimal Cantor system.}
Let ${\bf x}\in K^\mathbb{Z}$ where $x_{-1}x_0$ is a subword of $\sigma^l(a)$ for a fixed $a \in K$ and $l>0$. 
Let ${\bf z} \in \omega_\sigma ({\bf x})$. 
Thus, from Proposition \ref{prop:minimality}, the $S$-orbit closure of ${\bf z}$ is $X_\sigma$.
To show the minimality it is enough to prove that for every open set $U$ in $X_{\sigma}$ containing ${\bf z}$,
there is an $R>0$ such that for any $j \in \mathbb{Z}$, there is a $0 \leq i < R$ such that $S^{j+i}({\bf z}) \in U$. 
Notice that there are an $n \in \mathbb{N}$ and an $\epsilon>0$ such that $S^i({\bf z})$ in $U$ whenever
the distance from ${\bf z}[-n,n]$ to ${\bf z}[i-n,i+n]$ is less than $\epsilon$. 

Moreover, from the continuity of $\sigma$ and the very definition of ${\bf z}$, there are a $\delta>0$ and a $k>0$ such that $d_K(a,a')<\delta$ implies that a word 
appears in $\sigma^k(a')$ which is within $\epsilon$ of ${\bf z}[-n,n]$. 

By the primitivity of $\sigma$, there is an $m$ such that for any $b \in K$, 
$\sigma^m(b)$ contains a letter within $\delta$ of $a$. Accordingly, 
$\sigma^{m+k}(b)$ contains a word within $\epsilon$ of ${\bf z}[-n,n]$. 

 Let $R$ be equal to $\sup_{b\in K} |\sigma^{k+m}(b)|$. 
 It is finite from the compactness and the continuity assumption. 
 The sequence ${\bf z}$ being a concatenation of $\sigma^{m+k}$-words (Proposition \ref{prop:concatenation}),
for any $j \in \mathbb{Z}$, there is a $0 \leq i < R$ such that $S^{j+i}({\bf z}) \in U$.

\eqref{enum:rec}
\textbf{$(X_{\sigma}, S)$ is self-induced.}
Because the system is aperiodic, it is a minimal Cantor system. 
The map $\sigmahat: X_{\sigma} \to X_{\sigma}$ is  continuous by assumption, thus, the image set $U = \sigmahat(X_{\sigma})$ is compact (closed) in $X_{\sigma}$.
Furthermore, it is injective 
because $\sigma$ is recognizable. 
Consequently, every point in $X_{\sigma}$ can be decomposed into 
$\sigma$-words and every point in $X_{\sigma}$ is in $S^i(U)$ for some $i \in \mathbb{Z}$. 

Set $X_i = \{ {\bf x} \in X_{\sigma} : |\sigma(x_0)|=i\}$ and $U_i = \sigmahat(X_i)$. 
Since the $X_i$'s are disjoint and $\sigmahat$ is injective, so are the $U_i$'s.
Proposition \ref{prop:recsubgen} ensures 
the return time to $U$ on $U_i$ is $i$.
So we have that $\{ S^k (U_i) : 0 \leq k <i, 1\leq i \leq \max_{b\in K} |\sigma (b)| \}$
forms a disjoint collection of sets which covers $X_{\sigma}$. 
Thus, each 
$S^k(U_i)$ is clopen and $U$ is clopen. 

Let ${\bf x} \in X_{\sigma}$, and set $r=|\sigma(x_0)|$. 
Then, again from Proposition \ref{prop:recsubgen},
$r$ is the return time of $\sigmahat({\bf x})$ to the set $U$ and  $S^r(\sigmahat({\bf x}))=\sigmahat(S({\bf x})) \in U$. 
\end{proof}

We now establish the reverse direction, that is to show that any 
self-induced minimal Cantor system is conjugate to a system generated by 
a primitive, recognizable, aperiodic generalized substitution.

\begin{theorem} \label{gen2}
Suppose $(X,T)$ is a self-induced minimal Cantor system. 
Then, $(X,T)$ is conjugate to a recognizable, primitive, aperiodic, generalized substitution subshift $(X_{\sigma},S)$.
\end{theorem}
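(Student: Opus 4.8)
The plan is to take the self-induction conjugacy and ``encode'' it as the substitution. Suppose $(X,T)$ is self-induced, so there is a clopen proper $U \subsetneq X$ and a conjugacy $\varphi\colon (X,T)\to(U,T_U)$. The idea is that $\varphi$ should play the role of $\sigmahat$: it sends $X$ onto the clopen set $\varphi(X)=U$, and the first return map to $U$ is conjugate back to $T$ by $\varphi$ itself. To realize this as a genuine generalized substitution, I first conjugate $(X,T)$ to a generalized subshift using the tautological encoding $h\colon x\mapsto \ldots T^{-1}x.xTx\ldots$ described in the Generalized Subshifts subsection, so that without loss of generality $X\subset K^{\mathbb Z}$ for an alphabet space $K$ and $T=S$. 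The natural choice is $K=X$ itself (a Cantor set, hence an alphabet space), so that the ``letter'' at coordinate $i$ records the point $T^i x$.

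The core step is to build $\sigma\colon K\to K^+$ from $\varphi$ and the return-time function $r_U$. For a letter $a\in K$ (a point of $X$), the word $\sigma(a)$ should spell out the finite piece of orbit that $\varphi$ produces: if $\varphi(a)=x'$ lands in $U$ and the return time there is $r_U(x')=k$, then I set $\sigma(a)= x'\,Sx'\,\cdots\,S^{k-1}x'$, a word of length $k=r_U(\varphi(a))$ in $K^+$. Because $r_U$ is locally constant and $\varphi$, $S$ are continuous, the length map $a\mapsto|\sigma(a)|$ and each coordinate projection $\pi_j\circ\sigma$ are continuous on the appropriate clopen pieces, so $\sigma$ is a bona fide generalized substitution. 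By construction $\sigmahat$ agrees with $\varphi$ under the encoding, the induced map $S_{\sigmahat(X)}$ on $\sigmahat(X)=U$ is $T_U$, and the conjugacy equation $\varphi\circ T=T_U\circ\varphi$ translates into the substitution relation.

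It then remains to verify the three axioms. \emph{Aperiodicity} is immediate since minimal Cantor systems are aperiodic. \emph{Recognizability} should follow directly from the partition structure used in Theorem~\ref{gen1}: the injectivity of $\varphi$ together with the fact that $\{S^k(U_i):0\le k<i\}$ partitions $X$ gives, for each point, a unique decomposition into $\sigma$-words, exactly matching the recognizability definition. \emph{Primitivity} is the step requiring real work: I must show that for any nonempty open $V\subset K$ there is a $j$ so that every $\sigma^k(a)$, $k\ge j$, hits $V$. Here I would exploit minimality of $(X,T)$ together with Proposition~\ref{prop:uniftozero}, which guarantees that the nested clopen sets $\varphi^n(X)$ shrink to measure zero; iterating $\sigma$ corresponds to iterating $\varphi$, and minimality forces every long orbit segment — hence every sufficiently high power $\sigma^k(a)$, whose letters record a long return orbit refining the partition — to meet the clopen neighborhood $V$ uniformly in $a$.

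The main obstacle I anticipate is establishing primitivity with the required uniformity over all letters $a\in K$ and all large $k$ simultaneously, since $K$ is now an infinite (merely compact) alphabet and the return times $r_U$ need not be globally bounded. Plain minimality gives, for each starting letter, \emph{some} power whose image meets $V$, but the primitivity axiom demands a single $j$ working uniformly; I expect to extract this from the compactness of $K$ combined with a uniform recurrence (almost-periodicity) estimate coming from minimality of $(X,T)$, controlling how the $\sigma^k$-words refine the clopen partition generated by $V$. Once primitivity is in hand, Proposition~\ref{prop:minimality} and Theorem~\ref{gen1} close the loop and identify $X_\sigma$ with $X$ up to conjugacy.
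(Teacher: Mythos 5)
Your construction is exactly the paper's: you take $K=X$, define $\sigma(a)$ as the orbit segment $\varphi(a)\,T\varphi(a)\cdots T^{r_U(\varphi(a))-1}\varphi(a)$, and recover the conjugacy through the orbit encoding $h$. The continuity check, the deferral of aperiodicity to the final conjugacy, and the recognizability idea (letters lying in $U$ occur exactly at the beginnings of $\sigma$-words, and $\sigma$ is injective on letters because $\varphi$ is) all agree with the paper, up to one glossed point: recognizability is a statement about arbitrary points of $X_\sigma$, so before the tower partition of $X$ can be used you must show that every ${\bf z}\in X_\sigma$ is the orbit encoding $h(x)$ of a unique $x\in X$; the paper derives this from compactness of $X$, continuity of $T$, and the explicit formula \eqref{eq:SubstFormula} for $\sigma^n$. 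Also note that in Theorem \ref{gen1} the partition $\{S^k(U_i)\}$ is \emph{deduced} from recognizability, so citing it here is circular; the direct route is via the Kakutani--Rokhlin tower over the clopen set $U$ in $X$ itself.

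The genuine gap is precisely where you anticipated it: primitivity. By \eqref{eq:SubstFormula}, $\sigma^k(a)$ is the orbit segment starting at $\varphi^k(a)$ whose length is the return time of $\varphi^k(a)$ to $\varphi^k(X)$, so, given the uniform recurrence constant furnished by minimality, primitivity reduces to showing that the \emph{minimal} return time to $\varphi^k(X)$ tends to infinity. The tool you propose, Proposition \ref{prop:uniftozero}, cannot deliver this: small measure does not force large minimal return times (a clopen set of the form $B\cup TB$ with $B$ tiny has arbitrarily small measure in every invariant measure, yet minimal return time $1$), and if some point satisfied $T\varphi^k(a)\in\varphi^k(X)$ for every $k$ then $|\sigma^k(a)|=1$ for all $k$ and primitivity would fail outright for small $V$. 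The missing ingredient is Proposition \ref{wlog}: since $(X,T)$ is aperiodic there is a clopen $W$ with $W\cap TW=\emptyset$, and the self-inducing set $U$ may be chosen inside $W$, so that $U\cap TU=\emptyset$. Then every $T$-return time to $U$ is at least $2$; since $\varphi$ conjugates $(X,T)$ with $(U,T_U)$, the relation $\varphi(U)\cap T_U\varphi(U)=\varphi(U\cap TU)=\emptyset$ propagates down the levels and the return times double at each stage, giving $T$-return time at least $2^n$ to $\varphi^n(U)$, uniformly in the point. This uniform exponential growth, combined with uniform recurrence, yields the single $j$ that the primitivity axiom demands. Without this (or an equivalent) preliminary choice of $U$, the step you flagged as the real work does not go through as written.
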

\begin{proof}
Suppose $U \subset X$ is clopen and $(X,T)$ is 
self-induced via a conjugacy $\varphi:(X,T) \to (U,T_U)$. 
Recall that in Proposition \ref{wlog} we showed that without loss of generality 
we may assume $U \cap TU = \emptyset$. 

Set $K=X$, define $\sigma:K \to K^+$ by 
$$
\sigma(x) = \varphi(x) T(\varphi(x)) T^2(\varphi(x)) \cdots T^{r_U(\varphi ( x))-1}(\varphi(x)),
$$ 
where $r_{U}(\varphi(x))$ is the return time of $\varphi(x)$ to $U=\varphi (X)$. 

\medskip

\textbf{$\sigma$ is a generalized substitution.}
To see that $m:x \mapsto |\sigma(x)|$ is continuous, we simply note that $|\sigma(x)|=r_U(\varphi(x))$ and that $r_{U}\circ \varphi$ is continuous. 
That $\pi_j \circ \sigma$ is continuous follows from observing
$\pi_j \circ \sigma(x) = T^{j-1} \circ \varphi(x)$ if $1\le j \le  r_{U} (\varphi(x))$.

\medskip

\textbf{$\sigma$ is primitive.}
We will show by induction on $n\geq 1$ that, for each $x\in X$,  
\begin{align}\label{eq:SubstFormula}
\sigma^n(x) = \varphi^n(x) T(\varphi^n(x)) T^2(\varphi^n(x)) \cdots T^{r_{\varphi^{n}(X)}(\varphi^n(x))-1}(\varphi^n(x)).
 \end{align}
 
Suppose it is true for $n$.
For $0\leq i \leq r_{\varphi^{n}(X)}(\varphi^n(x))-1$ we have 
$$
\sigma( T^i \varphi^n (x)) = (\varphi T^i \varphi^n (x)) (T\varphi (T^i \varphi^n (x)))  \cdots T^{r_U (\varphi (T^i \varphi^n (x))) -1} \varphi (T^i \varphi^n (x)).
$$
Since $\varphi$ is a conjugacy map between $(X,T)$ and $(U,T_{U})$, we get $\varphi(T^{i}\varphi^n (x)) = T_{U}^{i}\varphi^{n+1}(x)$, so that for $0\leq j \leq  r_{U}(\varphi T^{i} \varphi^n(x))-1 = r_U (T_U^i  \varphi^{n+1} (x)) -1$, we have 
\begin{align*}
T^j \varphi (T^i \varphi^n (x)) & = T^j T_U^i \varphi^{n+1} (x) \hbox{ and }\\
T^{r_U (T_U^i  ( \varphi^{n+1} (x))) -1} \varphi (T^i \varphi^n (x)) & = T^{-1} T_U^{i+1} \varphi^{n+1} (x).
\end{align*}

Then, to finish the proof of the induction,  it remains to show that the last letter of $\sigma( T^{r_{\varphi^n(x)}(\varphi^n(x)) -1} \varphi^n (x))$ is the correct one, that is
$$
T^{r_{\varphi^{n+1}(X)}(\varphi^{n+1}(x))-1} \varphi^{n+1} (x) = T^{-1} T_U^{r_{\varphi^{n}(X)}(\varphi^n(x))} \varphi^{n+1} (x).
$$
This last equality  actually comes from
\begin{align*}
T_U^{r_{\varphi^{n}(X)}(\varphi^n(x))} \varphi^{n+1} (x) & = \varphi (T^{r_{\varphi^{n}(X)}(\varphi^n(x))} \varphi^{n} (x) )\\
& = \varphi (T_{\varphi^{n}(X)} \varphi^{n} (x))\\
& =  T_{\varphi^{n+1}(X)} \varphi^{n+1} (x),
\end{align*}
where the last equality is provided by Proposition \ref{prop:recuSelfInduce}.

Since $(X,T)$ is minimal, for any open set $V \subset X$  there is an $R>0$ such that for any point 
$z \in X$, $\{z,T(z), \ldots, T^R(z)\} \cap V \neq \emptyset$. 
Thus, primitivity will follow   from  Formula \eqref{eq:SubstFormula} by showing that the return time to $\varphi^{n}(U)$
must go to infinity with $n$.

Because $U \cap TU = \emptyset$, for all $x \in X$ the $T$-return time 
for $x$  to $U$ is  at least $2$. Similarly, the $T_U$-return time for a point $x \in \varphi(U)$ 
to $\varphi(U)$ is at least $2$, which implies the $T$-return time 
for a point $x \in \varphi(U)$ to $\varphi(U)$ is at least $4$. Accordingly, 
the $T$-return time to $\varphi^{n}(U)$ is at least $2^n$, and primitivity follows. 



\medskip

\textbf{$\sigma$ is recognizable.} Recall that any word $w$ in the langage $\mathcal{L}(\sigma)$ is a limit of subwords of words $\sigma^n(a_{n})$, that is, by Formula \eqref{eq:SubstFormula}, $w$ is a limit of words $ x_{n}Tx_{n} \ldots T^{|w|-1}x_{n}$ for some  points $x_{n} \in X$. The compactness of the space together with the continuity of the map $T$  ensure the word $w$ is of the form $ xTx \cdots T^{|w|-1}x$ for a point $x \in X$.   
Let $ \bf z$ be any sequence in $X_{\sigma}$. So any word ${\bf z}[-n,n]$, $n \ge 0$  is in $\mathcal{L}(\sigma)$, thus by  the former remark, and applying a classical diagonal extraction, we obtain that ${\bf z}$ is of the form ${\bf  z}=  \ldots T^{-2}x T^{-1}x. x Tx T^{2} x \ldots$ for some point $x \in X$.
Moreover notice that this $x$ is unique.

  Let $n_0\ge 0$ be the smallest non negative index $j$
such that $z_{j}=T^{j}x \in U$. By the surjectivity of $\varphi$, there is a $y \in X$ such that $z_{n_{0}} = \varphi(y)$. It follows, from the definitions of $\sigma$ and $\varphi$, that $S^{n_{0}}{\bf z} = \sigma(  \ldots T^{-2}y T^{-1}y. y Ty T^{2} y \ldots)$.

Inductively define $n_0 < n_1 < n_2 < \cdots$ and 
$n_0 > n_{-1} > n_{-2}> \cdots$ so that $z_j \in U$ if and only if $j=n_k$ for some $k$. 
Then, for all $k$,  ${\bf z}[n_k, n_{k+1}) = \sigma(T^{k}y)$. 

To prove the recognizability, observe that a $\sigma$-word has a letter in $U$ if and only if it is its first letter. Consequently, the sequence $(n_{k})$ is uniquely defined. Since the substitution $\sigma$ is injective on the letters, we get   $\sigma$ is recognizable.
\medskip

\textbf{$(X_{\sigma},S)$ is conjugate to $(X,T)$.}


We wish to show that there is a homeomorphism 
$h: X \to X_{\sigma}$ for which $h\circ T=S \circ h$. 
Note that the map $h:X \to X^{\mathbb{Z}}$ given by 
$$h: x \mapsto \ldots T^{-2}(x) T^{-1}(x) . x T(x) T^2(x)\cdots $$ 
gives a 
conjugacy from $(X,T)$ onto its image with the shift map. 
It is left to show that $h(X) = X_{\sigma}$. 
The substitution $\sigma$ being recognizable (see above), the system $(X_\sigma , S)$ is minimal from Theorem \ref{gen1}.
Consequently, it suffices to show that $h(X)$ is a subset of $X_{\sigma}$.

To that end, let $x \in X$ and consider the sequence 
$h(x)$. Let $I$ be the infinite set $\{i \in \mathbb{Z} : T^i(x) \in U\}$. Since $\varphi$ is bijective, for each $i\in I$,  there is a unique $y_{i} \in X$ such that 
$T^{i}(x) = \varphi(y_{i})$.  Notice that 
\begin{align*}
T^{i}(x) T^{i+1}(x) \cdots T^{i+r_U( T^{i}(x) )-1}(x) & = \varphi(y_i) T(\varphi(y_i)) \cdots T^{r_U (\varphi(y_i))-1}(\varphi(y_i)) \\
& = \sigma(y_i).
\end{align*}
Thus, the sequence $h(x)$ is a concatenation of $\sigma$-words $\sigma(y_{i})$. 

%
%
By applying the same arguments to the generalized substitution $\sigma^k$ for each integer  $k \ge 1$, associated with the set  $\varphi^{k-1}(U)$ and the  return times to that set (see  Formula \eqref{eq:SubstFormula}), we obtain  that $h(x)$ is a concatenation of $\sigma^k$-words for all $k$. Therefore, $h(x) \in X_{\sigma}$. 
\end{proof}


\section{Poincar\'e sections}
\label{section:poincare}

We end with a note about the situation where we expand our notion 
of induced maps to return maps on closed, but not 
necessarily clopen, sets. 
Let $(X,\mathcal{B} , \mu , T)$ be a measure theoretical dynamical system.
Due to Poincar\'e Recurrence Theorem (see \cite{Petersen:1983} for example), once we have $U\in \mathcal{B}$ with $\mu (U)> 0$ then, one can induce on $U$ to define the induced dynamical system $(U, \mathcal{B}\cap U , \mu_U , T_U )$, where $T_U (x)
$ is the first return of $x$ in $U$.
But it can happen that the induced map on  a Borel set $U$ is well defined even if $\mu (U)=0$. 
For example, consider the full-shift $( \{ 0,1\}^\mathbb{Z} , S)$  and $U$ the set of sequences $\dots w_{-1}.w_0 w_1 \dots $ where the $w_{2i}w_{2i+1}$ belongs to $\{ 00 , 11\}$ for all $i$. 
Observe the return time to $U$ of each element of $U$ is well-defined and equals $2$. 
Then, the induced map $S_U : U\to U$ is an homeomorphism and $(X  , S)$ is conjugate to $(U  , S_U)$. 
This suggests the following definition.

\begin{definition}
Let $(X,T)$ be a minimal Cantor system. 
We say that a nonempty closed set $C$ is a 
{\em Poincar\'e section} if the induced map $T_C : C\to C$ is a well-defined homeomorphism. 
We say that $(X,T)$ is {\em weakly self-induced} if there exists a Poincar\'e section $C$ such that 
$(X,T)$ is conjugate to $(C, T_C )$.
\end{definition}

The main result in this section is the following.
\begin{theorem}
\label{theo:weakkakutanieq}
Let $(X,T)$ and $(Y,R)$ be two minimal Cantor systems.
There exists a Poincar\'e section $C$ in $(Y,R)$ such that $(C , {R}_{C})$ is conjugate to $(X,T)$.
\end{theorem}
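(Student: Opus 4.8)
The plan is to represent both systems by Bratteli--Vershik diagrams and to realize $(X,T)$ as the first-return system on a suitable \emph{closed} (and genuinely non-clopen) subset of $Y$, built as a decreasing intersection of clopen sets. Using the Herman--Putnam--Skau theorem I would fix a simple properly ordered diagram $B_X$ representing $(X,T)$ and a simple properly ordered diagram $B$ representing $(Y,R)$, identifying $Y$ with $X_B$. Then I would construct a nested sequence of nonempty clopen sets $Y=C_0\supseteq C_1\supseteq C_2\supseteq\cdots$, each a finite union of cylinders of $B$, together with a labelling of the atoms of $C_n$ by the level-$n$ data (vertices, heights, order positions) of $B_X$, and finally set $C=\bigcap_n C_n$.

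For the inductive step I would view $C_n$ as the base of a tower (Kakutani--Rokhlin) partition of $Y$ whose combinatorics are arranged to coincide with those of the level-$n$ partition coming from $B_X$, so that the first-return map $R_{C_n}$, read through the labelling, acts as the Vershik map of $B_X$ truncated at level $n$. To pass from $C_n$ to $C_{n+1}$ I would refine the base according to level $n+1$ of $B_X$, grouping and ordering the current atoms exactly as the edges $E^X_{n+1}$ and the order $\preceq^X_{n+1}$ prescribe, by selecting the corresponding deeper cylinders of $B$. Proposition \ref{th:BVinduction} guarantees that each $(C_n,R_{C_n})$ is itself a Bratteli--Vershik (hence minimal Cantor) system, and the simplicity of $B$—through the strict positivity and unbounded growth of the telescoped incidence matrices $P(l,k)$—provides, at a sufficiently deep level, enough pairwise disjoint ordered paths to realize any prescribed incidence data $M_X(n+1)$ together with the order $\preceq^X_{n+1}$.

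Taking $C=\bigcap_n C_n$, each point $c\in C$ acquires a well-defined infinite $B_X$-address, giving a map $\psi\colon C\to X_{B_X}=X$ which is a continuous bijection, hence a homeomorphism by compactness. The remaining task is to check that $C$ is a Poincaré section and that $\psi$ conjugates $R_C$ to $T$. The crucial point is finiteness of the return time: for $c\in C$ whose address is not the maximal path of $B_X$, the Vershik successor is resolved at some finite level $k$, and $c$ returns to $C$ after traversing the finite $Y$-tower attached to levels $\le \ell_k$, so $r_C(c)<\infty$; the unique maximal address is sent to the minimal one exactly as in the definition of the Vershik map. Granting finiteness, the return times stabilize, $r_{C_n}(c)\to r_C(c)$ as $C_n\downarrow C$, and the level-$n$ compatibility of the labellings yields $\psi\circ R_C=T\circ\psi$.

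The main obstacle is exactly the control of the return map to the non-clopen set $C$: in contrast with an induced system on a clopen set, here $r_C$ is not continuous, and one must guarantee that no point of $C$ has infinite return time while simultaneously forcing the first return to agree with the sub-diagram Vershik map. This is what dictates the careful, order-aware choice of cylinders at every stage—matching not only the incidence matrices $M_X(n+1)$ but also the orders $\preceq^X_{n+1}$—so that the $R$-orbit of a base point leaves and re-enters $C$ in precisely the pattern prescribed by $B_X$. Establishing this order compatibility, and that it is realizable inside the simple diagram $B$ at each level, is the technical heart of the argument; the measure-theoretic flexibility, namely that $C$ may have measure zero (compare Proposition \ref{prop:uniftozero}), is what lets one bypass the entropy and invariant-measure obstructions that forbid $X$ from being an induced system on a clopen set.
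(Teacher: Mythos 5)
Your overall strategy coincides with the paper's: both fix Bratteli--Vershik representations, embed the ordered level data of $B_X$ level by level into a telescoping of the diagram of $Y$, and take $C$ to be the decreasing intersection of the resulting clopen sets (the paper phrases this as finding an ordered subdiagram $\tilde{B}$ of a contraction $B''$ of the diagram of $Y$ and setting $C=X_{\tilde{B}}$). But two points that you defer are genuine gaps, not routine verifications. The first is the realization step itself: positivity and growth of the telescoped matrices $P(l,k)$ give enough \emph{paths} to match the incidence matrices, but they say nothing about matching the \emph{order}. The paper needs a separate combinatorial argument (Lemma \ref{lem:tech1}), proved by induction on the number of edges: a $\preceq$-maximal edge is removed, the remaining graph is embedded, pushed two levels deeper along \emph{minimal} connecting paths, and the removed edge is then realized as a \emph{maximal} path, the existence of at least two paths between vertices being what keeps maximal and minimal paths distinct and makes the induced order agree. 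You correctly identify this as the technical heart, but you give no argument for it.

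The second gap is an error in the argument as written: you dispose of the maximal point by saying it ``is sent to the minimal one exactly as in the definition of the Vershik map.'' But $R_C$ is a first-return map, so its value at the maximal point cannot be decreed; one must show that this point returns to $C$ at all. If the level-by-level embedding is made with no further care, it does not: let $c_{\max}\in C$ be the point whose $B_X$-address is everywhere maximal, and suppose (as will generally happen) that infinitely many edges of its address fail to be maximal in the ambient diagram. Then every forward $R$-iterate of $c_{\max}$ reads as an initial path $q$ down to some level $j$ followed by the tail of $c_{\max}$, where $q$ is strictly greater, in the ambient path order, than the initial $j$-path of $c_{\max}$; since that initial path is the \emph{maximal} path of the embedded copy into its end vertex, every path of the copy into that vertex is $\preceq$ it, so $q$ is never a path of the copy and no iterate lies in $C$. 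Hence $r_C(c_{\max})=\infty$ and $C$ is not a Poincar\'e section. The cure is to build the embedding so that maximal (resp.\ minimal) edges of $B_X$ are realized by maximal (resp.\ minimal) paths of the ambient diagram, forcing the maximal and minimal paths of the embedded copy to be compatible with the ambient ones; then $c_{\max}$ does return, to $c_{\min}$, and the first-return map is the Vershik map of the copy. This extremal compatibility is exactly what the maximal-path choice in the proof of Lemma \ref{lem:tech1} is engineered to provide, and it is the hypothesis your ``return times stabilize'' step silently requires.
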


Before proving it, we need a technical lemma on embedding of ordered graphs. 
If $G_{1} =(V_{1},E_{1})$  $G_{2}=(V_{2},E_{2})$ are two graphs with respective orders $\preceq_{1}$, $\preceq_{2}$ on the edges, we say that $(G_{1}, \preceq_{1})$ and $(G_{2}, \preceq_{2})$ are {\em isomorphic} if there is a graph isomorphism $i \colon G_{1} \to G_{2}$ that is order preserving {\em i.e.}, for all $e,f \in E_{1}$ 
$$e\preceq_{1}f \textrm{ if and only if  } i(e) \preceq_{2} i(f).$$

We refer to Section \ref{subsec:bvrep} for the definitions involve in the notion of Bratteli diagrams.

\begin{lemma}\label{lem:tech1}
Let $B = ((V_{n}), (E_{n}), (\preceq_{n}))$ be a  simple ordered  Bratteli diagram such that the sequence $(\# V_{n})$ is increasing.  Let $V$ be a subset of  $V_{n_{0}}$ for some level $n_{0}$ and let $K = (V, V', E)$ be a bipartite graph with a RL-order $\preceq_{K}$ on $E$.  
Then there exists an integer $k$    such that  $(V_{n_{0}}, V_{n_{0}+k}, E_{n_{0}+1, n_{0}+k})$ contains a subgraph $(V, V", E')$ isomorphic to $(K, \preceq_{K})$ for the order $\preceq_{n_{0}+1,n_{0}+k}$ restricted to $E'$. 
\end{lemma}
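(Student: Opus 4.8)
The plan is to build the required subgraph directly inside the path graph $E_{n_0+1,n_0+k}$, treating paths as edges and exploiting the structure of the induced RL-order $\preceq_{n_0+1,n_0+k}$. Recall two such paths are comparable exactly when they share their endpoint in $V_{n_0+k}$, and that the induced order is decided at the \emph{highest} level where two paths with a common endpoint differ: if they agree on all edges strictly above a level $i$ and are comparable at level $i$, the one with the smaller edge at level $i$ is the smaller path. The key idea is therefore to encode the linear order of the edges of $K$ into a fixed $w$ entirely in the \emph{upper} portion of the realizing paths, leaving their lower portions free to start at whatever sources in $V$ the graph $K$ prescribes.

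Concretely, write $V'=\{w_1,\dots,w_q\}$, let $m_w$ be the number of edges of $K$ ending at $w$, listed in the $\preceq_K$-chain as $e^w_1\prec_K\cdots\prec_K e^w_{m_w}$ with sources $s(e^w_i)\in V$, and set $M=\max_w m_w$. Using simplicity together with the fact that $(\#V_n)$ is increasing, I would choose levels $n_0<n_1<n_2=n_0+k$ so that (i) $\#V_{n_2}\ge q$, (ii) $P(n_1,n_0)$ is strictly positive, and (iii) every entry of $P(n_2,n_1)$ is at least $M$. I then fix distinct vertices $v''_{w_1},\dots,v''_{w_q}\in V_{n_2}$ (these form $V''$) and a single vertex $u^\ast\in V_{n_1}$. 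By (iii) there are at least $m_w$ distinct paths from $u^\ast$ to $v''_w$; I choose $m_w$ of them and order them $h^w_1\prec\cdots\prec h^w_{m_w}$ for $\preceq_{n_1+1,n_2}$. By (ii) there is, for each $i$, a path $\ell^w_i$ from $s(e^w_i)$ to $u^\ast$ in $E_{n_0+1,n_1}$. I set $P^w_i=\ell^w_i h^w_i\in E_{n_0+1,n_2}$, take $E'=\{P^w_i\}$, and define the map by $e^w_i\mapsto P^w_i$, $w\mapsto v''_w$, and the identity on $V$.

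It remains to verify this is an isomorphism of ordered graphs. Incidence is immediate since $P^w_i$ runs from $s(e^w_i)\in V$ to $v''_w$; the map is a bijection onto $E'$ because paths into distinct $v''_w$ are distinct, and for fixed $w$ the $P^w_i$ have pairwise distinct upper parts $h^w_i$. For the order, paths into different $v''_w$ are $\preceq_{n_0+1,n_2}$-incomparable, matching the incomparability of edges of $K$ into different vertices. For fixed $w$ and $i<i'$, the paths $P^w_i,P^w_{i'}$ share the endpoint $v''_w$ while $h^w_i\prec h^w_{i'}$; the highest level at which $h^w_i$ and $h^w_{i'}$ differ lies above $n_1$, hence it is also the highest level at which $P^w_i$ and $P^w_{i'}$ differ, so the induced order compares the full paths exactly as it compares their upper parts. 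Thus $P^w_i\prec_{n_0+1,n_2}P^w_{i'}$ iff $i<i'$, which reproduces $\prec_K$ along the chain into $w$, giving the desired order isomorphism.

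The only genuinely nonroutine point, which I would isolate first, is property (iii): that the path multiplicities can be made uniformly large. This follows from simplicity by inserting an intermediate level $n'$ with $n_1<n'<n_2$, $\#V_{n'}\ge M$ (available since $(\#V_n)$ is increasing), and $n',n_2$ far enough that $P(n',n_1)$ and $P(n_2,n')$ are strictly positive; then for all $u\in V_{n_1}$, $v\in V_{n_2}$,
\[
P(n_2,n_1)_{v,u}=\sum_{t\in V_{n'}}P(n_2,n')_{v,t}\,P(n',n_1)_{t,u}\ \ge\ \#V_{n'}\ \ge\ M .
\]
Everything else is bookkeeping with the definition of the induced RL-order. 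I expect the handling of that order — in particular making precise that ``the top of the path decides the comparison'' — to be the part of the write-up demanding the most care.
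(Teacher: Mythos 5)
Your proof is correct, but it takes a genuinely different route from the paper's. The paper argues by induction on the number of edges of $K$: it removes a $\preceq_K$-maximal edge $e=(x_e,y_e)$, embeds the remaining graph by the induction hypothesis, pushes that embedding two levels deeper by concatenating each realizing path ending at $y$ with the \emph{minimal} path from $y$ to $i(y)$ for an injection $i\colon V'\to V_{n_0+k+2}$ (an operation that preserves the induced order and keeps distinct endpoints distinct), and then realizes $e$ as a path into $i(y_e)$ whose top segment is \emph{maximal}, hence strictly above the minimal top segments carried by the other edges into $y_e$. Your construction is instead one-shot: you funnel everything through a single vertex $u^\ast$ at an intermediate level, use the lower block (positivity of $P(n_1,n_0)$) to reach the prescribed sources in $V$, and use the upper block --- where you force every entry of $P(n_2,n_1)$ to be at least $M$ via an intermediate level with at least $M$ vertices --- to encode each fiber's total order all at once, choosing $m_w$ distinct upper paths in increasing induced order; your key observation that the highest level of disagreement of $P^w_i$ and $P^w_{i'}$ is exactly that of $h^w_i$ and $h^w_{i'}$ (so the lower segments $\ell^w_i$ are irrelevant to the comparison) is exactly right. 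Both arguments rest on the same two facts (the induced order on paths is an RL-order decided at the highest level of disagreement, and simplicity makes transition matrices positive), and both need $(\#V_n)$ increasing to inject $V'$ into a deep enough level. What your version buys: no induction, no need for the mildly delicate point that a maximal top path differs from a minimal one, and an explicit, quantitative choice of $k$. What the paper's version buys: it only needs the weakest counting input (at least two paths between any pair of vertices two levels apart) rather than multiplicity at least $M$, at the price of re-embedding at every inductive step. One cosmetic remark: with the paper's (strong) definition of simplicity --- every $M(k)$ is positive --- your condition (ii) and the positivity clauses inside your verification of (iii) are automatic, so the ``far enough'' requirements can simply be dropped.
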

\begin{proof} For a fixed set $V \subset V_{n_{0}}$, we show by induction on $n \ge 1$ that for every  ordered bipartite graph $K = (V, V', E)$ with $n$ edges  and with order $\preceq_{K}$, there exist an integer $k$   and a subgraph  $ (V, V", E')$ of  $(V_{n_{0}}, V_{n_{0}+k}, E_{n_{0}+1, n_{0}+k})$ such that  $( (V, V", E'), \preceq_{n_{0}+1, n_{0}+k})$ is  isomorphic to $(K, \preceq_{K})$.

The result is obvious if $n =1$.  Assume that $n > 1$ and  that the result is true for $n-1$.  Let $((V, V', E), \preceq_{K})$ be an ordered  bipartite graph with $n$ edges and     let $e= (x_{e}, y_{e})   \in E$ be a maximal element for the order $\preceq_{K}$. 
Without loss of generality, we may assume thanks the induction hypothesis, that  $(V, V', E\setminus \{e\})$ is a subgraph of $(V_{n_{0}}, V_{n_{0}+k}, E_{n_{0}+1, n_{0}+k})$ for some integer $k$, and the order $\preceq_K$ coincides with $\preceq_{n_{0}+1, n_{0}+k}$ on $E\setminus \{e\}$. Since the diagram $B$ is simple, there is an injection $i \colon V' \to  V_{n_{0} +k+2}$. By concatenating any edge  $(x,y)$ of $E\setminus \{e\}$ with the  minimal path from $y$ to $i(y)$, we obtain a subgraph of   $(V_{n_{0}}, V_{n_{0}+k+2}, E_{n_{0}+1, n_{0}+k+2})$ isomorphic to  $((V, V', E\setminus \{e\}), \preceq_{K})$ for the restriction of $\preceq_{n_{0}+1, n_{0}+k+2}$. Then we identify the edge $e$ with a maximal edge from $x_{e} \in V$ to $i(y_{e}) \in V_{n_{0}+k+2}$ in $E_{n_{0}+1, n_{0}+k+2}$. Since they are at least two paths from each vertex of $V_{n_{0}+k}$ to each vertex of $V_{n_{0}+k+2}$, the maximal paths differ from the minimal ones. Hence we obtain a subgraph of $(V_{n_{0}}, V_{n_{0}+k+2}, E_{n_{0}+1, n_{0}+k+2})$ isomorphic to $((V, V', E), \preceq_{K})$. 
\end{proof}

The strategy to prove Theorem \ref{theo:weakkakutanieq} is the following: we start fixing a Bratteli diagram  $B$ representing  $(X,T)$. Then by microscoping and contracting a Bratteli diagram associated to $(Y,R)$, we obtain one that contains a copy $\tilde{B}$ of $B$. The Bratteli-Vershik system associated to $\tilde{B}$ is then conjugated to $(X,T)$ and an induced system of $(Y,R)$.

\begin{proof}[Proof of Theorem \ref{theo:weakkakutanieq}]
Take a Bratteli-Vershik representation $(X_B , V_B)$  of   $(X,T)$    given by the ordered Bratteli diagram $B = ((V_{n}), (E_{n}), (\preceq_{n}))$. 
Contracting and microscoping sufficiently a  Bratteli diagram representing $(Y,R)$, we  obtain an ordered Bratteli diagram $B' = ((V_{n}'),(E_{n}') , (\preceq_{n}'))$  where  $(\# V'_{n})$ is increasing and for each level $n$, $\# V'_{n} \ge \# V_{n}$.
We inductively use Lemma \ref{lem:tech1} to get a contraction $B''= ((V''_{n}),(E''_{n}) , (\preceq_{n}''))$ of $B'$ such that, for each level $n$,  the bipartite ordered graph   $((V_{n}, V_{n+1}, E_{n+1}), \preceq_{n+1})$   is isomorphic to $((\tilde{V}_{n}, \tilde{V}_{n+1}, \tilde{E}_{n+1}), \preceq_{n+1}'')$ a subgraph of  $(V''_{n}, V''_{n+1}, E''_{n+1})$. 
It follows that $\tilde{B} = ((\tilde{V}_{n}), (\tilde{E}_{n}), (\preceq_{n}''))$,  is a subdiagram of $B''$  and the associated Bratteli-Vershik system $(X_{\tilde{B}} , V_{\tilde{B}})$, with $X_{\tilde{B}} \subset X_{B''}$,   is a new representation of $(X,T)$. 
Observe $X_{\tilde{B}}$ is a non empty closed set being a decreasing intersection of closed sets.
Since the induced orders coincide, the induced system of $(X_{B"}, V_{B"})$  on $X_{\tilde{B}}$ is conjugate to $(X,T)$.
 \end{proof}

\begin{corollary}
\label{theo:weakself}
Every minimal Cantor system is weakly self-induced. 
\label{thm:weak}
\end{corollary}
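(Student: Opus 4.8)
The plan is to obtain this corollary as an immediate specialization of Theorem \ref{theo:weakkakutanieq}. The theorem is stated for an arbitrary \emph{pair} of minimal Cantor systems $(X,T)$ and $(Y,R)$, producing a Poincar\'e section $C$ inside $(Y,R)$ whose return system $(C,R_C)$ is conjugate to $(X,T)$. The key observation is that nothing in the hypotheses forbids the two systems from coinciding, so the natural move is to feed the \emph{same} system into both slots.

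Concretely, I would take an arbitrary minimal Cantor system $(X,T)$ and apply Theorem \ref{theo:weakkakutanieq} with the second system equal to the first, that is, with $(Y,R)=(X,T)$. The theorem then furnishes a Poincar\'e section $C \subset X$ such that the induced system $(C,T_C)$ is conjugate to $(X,T)$. By the definition of \emph{weakly self-induced}, this is exactly the assertion that $(X,T)$ is weakly self-induced.

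Since $(X,T)$ was arbitrary, every minimal Cantor system is weakly self-induced, which is the claim. There is no real obstacle here: all of the difficulty has already been absorbed into the proof of Theorem \ref{theo:weakkakutanieq}, in particular into the graph-embedding Lemma \ref{lem:tech1} and the microscoping/contracting construction that embeds a representation of one system as a subdiagram of a representation of the other. The only thing to verify is that the specialization $(Y,R)=(X,T)$ is a legitimate instance of the theorem, which it plainly is, and that the resulting $C$ satisfies Definition of a Poincar\'e section, which is guaranteed verbatim by the conclusion of the theorem.
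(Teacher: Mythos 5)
Your proposal is correct and is exactly the paper's intended derivation: the corollary is stated without a separate proof precisely because it is the specialization of Theorem \ref{theo:weakkakutanieq} to the case $(Y,R)=(X,T)$, which is what you do. Nothing further is needed, since all the work is indeed contained in the theorem and Lemma \ref{lem:tech1}.
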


\section*{Acknowledgements}
We would like to thank Gabriel Vigny who found the example at the beginning of Section \ref{section:poincare}.

\bibliography{DPO}

\begin{thebibliography}{10}

\bibitem{Arnoux:1988}
P.~Arnoux.
\newblock Un exemple de semi-conjugaison entre un \'echange d'intervalles et
  une translation sur le tore.
\newblock {\em Bull. Soc. Math. France}, 116:489--500 (1989), 1988.

\bibitem{Arnoux&Ito:2001}
P.~Arnoux and S.~Ito.
\newblock Pisot substitutions and {R}auzy fractals.
\newblock {\em Bull. Belg. Math. Soc. Simon Stevin}, 8:181--207, 2001.
\newblock Journ{\'e}es Montoises d'Informatique Th{\'e}orique
  (Marne-la-Vall{\'e}e, 2000).

\bibitem{Arnoux&Rauzy:1991}
P.~Arnoux and G.~Rauzy.
\newblock Repr\'esentation g\'eom\'etrique de suites de complexit\'e $2n+1$.
\newblock {\em Bull. Soc. Math. France}, 119:199--215, 1991.

\bibitem{Boshernitzan&Carroll:1997}
M.~Boshernitzan and C.~R. Carroll.
\newblock An extension of lagrange's theorem to interval exchange
  transformations over quadratic fields.
\newblock {\em J. Anal. Math.}, 72:21--44, 1997.

\bibitem{Bressaud&Jullian:2012}
X.~Bressaud and Y.~Jullian.
\newblock Interval exchange transformation extension of a substitution
  dynamical system.
\newblock {\em Confluentes Math.}, 4:1250005, 54, 2012.

\bibitem{Dartnell&Durand&Maass:2000}
P.~Dartnell, F.~Durand, and A.~Maass.
\newblock Orbit equivalence and kakutani equivalence with sturmian subshifts.
\newblock {\em Studia Math.}, 142:25--45, 2000.

\bibitem{Dekking:1978}
F.~M. Dekking.
\newblock The spectrum of dynamical systems arising from substitutions of
  constant length.
\newblock {\em Z. Wahrscheinlichkeitstheorie und Verw. Gebiete}, 41:221--239,
  1978.

\bibitem{delJunco&Rudolph&Weiss:2009}
A.~del Junco, D.~J. Rudolph, and B.~Weiss.
\newblock Measured topological orbit and {K}akutani equivalence.
\newblock {\em Discrete Contin. Dyn. Syst. Ser. S}, 2:221--238, 2009.

\bibitem{DToep}
T.~Downarowicz.
\newblock Survey of odometers and {T}oeplitz flows.
\newblock In {\em Algebraic and topological dynamics}, volume 385 of {\em
  Contemp. Math.}, pages 7--37. Amer. Math. Soc., Providence, RI, 2005.

\bibitem{Downarowicz&Maass:2008}
T.~Downarowicz and A.~Maass.
\newblock Finite-rank {B}ratteli-{V}ershik diagrams are expansive.
\newblock {\em Ergodic Theory Dynam. Systems}, 28:739--747, 2008.

\bibitem{Durand:thesis}
F.~Durand.
\newblock {\em Contributions {\`a} l'{\'e}tude des suites et syst{\`e}mes
  dynamiques substitutifs}.
\newblock PhD thesis, Universit{\'e} de la M{\'e}diterran{\'e}e (Aix-Marseille
  II), 1996.

\bibitem{Durand:1998}
F.~Durand.
\newblock A characterization of substitutive sequences using return words.
\newblock {\em Discrete Math.}, 179:89--101, 1998.

\bibitem{Durand:2010}
F.~Durand.
\newblock Combinatorics on {B}ratteli diagrams and dynamical systems.
\newblock In {\em Combinatorics, automata and number theory}, volume 135 of
  {\em Encyclopedia Math. Appl.}, pages 324--372. Cambridge Univ. Press,
  Cambridge, 2010.

\bibitem{Durand:2013}
F.~Durand.
\newblock Decidability of uniform recurrence of morphic sequences.
\newblock {\em Internat. J. Found. Comput. Sci.}, 24:123--146, 2013.

\bibitem{Durand&Host&Skau:1999}
F.~Durand, B.~Host, and C.~Skau.
\newblock Substitutive dynamical systems, bratteli diagrams and dimension
  groups.
\newblock {\em Ergodic Theory Dynam. Systems}, 19:953--993, 1999.

\bibitem{Dykstra&Rudolph:2010}
A.~Dykstra and D.~J. Rudolph.
\newblock Any two irrational rotations are nearly continuously {K}akutani
  equivalent.
\newblock {\em J. Anal. Math.}, 110:339--384, 2010.

\bibitem{Ferenczi:1997}
S.~Ferenczi.
\newblock Systems of finite rank.
\newblock {\em Colloq. Math.}, 73:35--65, 1997.

\bibitem{Ferenczi:2006}
S.~Ferenczi.
\newblock Substitution dynamical systems on infinite alphabets.
\newblock {\em Ann. Inst. Fourier (Grenoble)}, 56:2315--2343, 2006.
\newblock Num{\'e}ration, pavages, substitutions.

\bibitem{Ferenczi&Holton&Zamboni:2003}
S.~Ferenczi, C.~Holton, and L.~Q. Zamboni.
\newblock Structure of three-interval exchange transformations. {II}. {A}
  combinatorial description of the trajectories.
\newblock {\em J. Anal. Math.}, 89:239--276, 2003.

\bibitem{Ferenczi&Mauduit&Nogueira:1996}
S.~Ferenczi, C.~Mauduit, and A.~Nogueira.
\newblock Substitution dynamical systems: algebraic characterization of
  eigenvalues.
\newblock {\em Ann. Sci. \'Ecole Norm. Sup.}, 29:519--533, 1996.

\bibitem{Fogg:2002}
N.~P. Fogg.
\newblock {\em Substitutions in dynamics, arithmetics and combinatorics},
  volume 1794 of {\em Lecture Notes in Mathematics}.
\newblock Springer-Verlag, Berlin, 2002.
\newblock Edited by V. Berth{\'e}, S. Ferenczi, C. Mauduit and A. Siegel.

\bibitem{Giordano&Putnam&Skau:1995}
T.~Giordano, I.~Putnam, and C.~Skau.
\newblock Topological orbit equivalence and {$C^*$}-crossed products.
\newblock {\em J. Reine Angew. Math.}, 469:51--111, 1995.

\bibitem{Gjerde&Johansen:2002}
R.~Gjerde and 0.~Johansen.
\newblock Bratteli-vershik models for cantor minimal systems associated to
  interval exchange transformations.
\newblock {\em Math. Scand.}, 90:87--100, 2002.

\bibitem{Glasner&Weiss:1995}
E.~Glasner and B.~Weiss.
\newblock Weak orbit equivalence of cantor minimal systems.
\newblock {\em Internat. J. Math.}, 6:559--579, 1995.

\bibitem{Herman&Putnam&Skau:1992}
R.~H. Herman, I.~Putnam, and C.~F. Skau.
\newblock Ordered bratteli diagrams, dimension groups and topological dynamics.
\newblock {\em Internat. J. Math.}, 3:827--864, 1992.

\bibitem{Jullian:2013}
Y.~Jullian.
\newblock An algorithm to identify automorphisms which arise from self-induced
  interval exchange transformations.
\newblock {\em Math. Z.}, 274:33--55, 2013.

\bibitem{Kosek&Ormes&Rudolph:2008}
W.~Kosek, N.~Ormes, and D.~J. Rudolph.
\newblock Flow-orbit equivalence for minimal {C}antor systems.
\newblock {\em Ergodic Theory Dynam. Systems}, 28:481--500, 2008.

\bibitem{KryBog:1937}
N.~Kryloff and N.~Bogoliouboff.
\newblock La th\'eorie g\'en\'erale de la mesure dans son application \`a
  l'\'etude des syst\`emes dynamiques de la m\'ecanique non lin\'eaire.
\newblock {\em Ann. of Math. (2)}, 38:65--113, 1937.

\bibitem{Kurka:2003}
P.~Kurka.
\newblock {\em Topological and symbolic dynamics}, volume~11 of {\em Cours
  Sp\'ecialis\'es [Specialized Courses]}.
\newblock Soci\'et\'e Math\'ematique de France, Paris, 2003.

\bibitem{Lind&Marcus:1995}
D.~Lind and B.~Marcus.
\newblock {\em An Introduction to Symbolic Dynamics and Coding}.
\newblock Cambridge Univ. Press, 1995.

\bibitem{Mela&Petersen:2005}
Xavier M{\'e}la and Karl Petersen.
\newblock Dynamical properties of the {P}ascal adic transformation.
\newblock {\em Ergodic Theory Dynam. Systems}, 25(1):227--256, 2005.

\bibitem{Mosse:1992}
B.~Moss{\'e}.
\newblock Puissances de mots et reconnaissabilit\'e des points fixes d'une
  substitution.
\newblock {\em Theoret. Comput. Sci.}, 99:327--334, 1992.

\bibitem{Mosse:1996}
B.~Moss{\'e}.
\newblock Reconnaissabilit\'e des substitutions et complexit\'e des suites
  automatiques.
\newblock {\em Bull. Soc. Math. France}, 124:329--346, 1996.

\bibitem{Ornstein&Rudolph&Weiss:1982}
D.~S. Ornstein, D.~J. Rudolph, and B.~Weiss.
\newblock Equivalence of measure preserving transformations.
\newblock {\em Mem. Amer. Math. Soc.}, 37(262):xii+116, 1982.

\bibitem{Petersen:1983}
K.~Petersen.
\newblock {\em Ergodic theory}.
\newblock Cambridge University Press, 1983.

\bibitem{Queffelec:1987}
M.~Queff{\'e}lec.
\newblock {\em Substitution dynamical systems---spectral analysis}, volume 1294
  of {\em Lecture Notes in Mathematics}.
\newblock Springer-Verlag, Berlin, 1987.

\bibitem{Rauzy:1979}
G.~Rauzy.
\newblock \'{E}changes d'intervalles et transformations induites.
\newblock {\em Acta Arith.}, 34:315--328, 1979.

\bibitem{Rauzy:1982}
G.~Rauzy.
\newblock Nombres alg\'ebriques et substitutions.
\newblock {\em Bull. Soc. Math. France}, 110:147--178, 1982.

\bibitem{Roychowdhury&Rudolph:2009}
M.~R. Roychowdhury and D.~J. Rudolph.
\newblock Nearly continuous {K}akutani equivalence of adding machines.
\newblock {\em J. Mod. Dyn.}, 3:103--119, 2009.

\bibitem{Veech:1978}
W.~Veech.
\newblock Interval exchange transformations.
\newblock {\em J. Analyse Math.}, 33:222--272, 1978.

\bibitem{SWill}
S.~Williams.
\newblock Toeplitz minimal flows which are not uniquely ergodic.
\newblock {\em Z. Wahrsch. Verw. Gebiete}, 67:95--107, 1984.

\end{thebibliography}

\end{document}